\setlist[enumerate,1]{label=\arabic*), ref=\arabic*}
\let\equation=\gather
\let\endequation=\endgather
\numberwithin{equation}{section}
\renewcommand*{\@fnsymbol}[1]{\ensuremath{\ifcase#1\or 1\or 2\else\@ctrerr\fi}}
\newtheorem{theorem}{Theorem}
\newtheorem{corollary}{Corollary}[section]
\newtheorem{lemma}[corollary]{Lemma}
\newtheorem{proposition}[corollary]{Proposition}
\theoremstyle{definition}
\newtheorem{definition}[corollary]{Definition}
\newtheorem{example}[corollary]{Example}
\theoremstyle{remark}
\newtheorem{remark}[corollary]{Remark}
\DeclareMathOperator*{\esssup}{ess\,sup}
\DeclareMathOperator*{\essinf}{ess\,inf}
\newcommand{\R}{\mathbb{R}}
\newcommand{\X}{{\mathbb{R}}}
\newcommand{\N}{\mathbb{N}}
\newcommand{\eps}{\varepsilon}
\newcommand{\la}{\lambda}
\newcommand{\La}{\Lambda}
\newcommand{\ga}{\gamma}
\newcommand{\ka}{\varkappa}
\newcommand{\M}{{\mathcal{M}}}
\renewcommand{\L}{{\mathcal{L}}}
\renewcommand\S{\mathcal{S}}
\renewcommand{\P}{{\mathcal{P}}}
\newcommand\MS{\mathcal{MS}}
\newcommand{\PL}{{\mathcal{PL}}}
\newcommand\F[1][]{\mathcal{F}_{#1}}
\newcommand{\ma}{\mathfrak{m}}
\newcommand{\lt}{l_\theta}
\newcommand{\1}{1\!\!1}
\newcommand{\locun}{\xRightarrow{\,\mathrm{loc}\ }}
\newcommand{\diff}[2]{\frac{\partial #1}{\partial #2}}
\newcounter{assum}
\newenvironment{assum}{\refstepcounter{assum}\let\myabovedisplayskip=\abovedisplayskip\abovedisplayskip=8pt plus 2pt minus 2pt\let\mybelowdisplayskip=\belowdisplayskip\belowdisplayskip=8pt plus 2pt minus 2pt\equation\tag{\ensuremath{\mathrm{A}\theassum}}}{\endequation\let\abovedisplayskip=\myabovedisplayskip\let\belowdisplayskip=\mybelowdisplayskip}
\title{Accelerated nonlocal nonsymmetric dispersion for monostable equations on the real line}
\author{Dmitri Finkelshtein\thanks{Department of Mathematics,
Swansea University, Singleton Park, Swansea SA2 8PP, U.K. ({\tt d.l.finkelshtein@swansea.ac.uk}).} \and Pasha Tkachov\thanks{Fakult\"{a}t
f\"{u}r Mathematik, Universit\"{a}t Bielefeld, Postfach 110 131, 33501 Bielefeld,
Germany ({\tt ptkachov@math.uni-bielefeld.de}).}}
\begin{document}

\maketitle 

\begin{abstract}
We consider the accelerated propagation of solutions to equations with a nonlocal linear dispersion on the real line and monostable nonlinearities (both local or nonlocal, however, not degenerated at $0$), in the case when either of the dispersion kernel or the initial condition has regularly heavy tails at both $\pm\infty$, perhaps different. We show that, in such case, the propagation to the right direction is fully determined by the right tails of either the kernel or the initial condition. We describe both cases of integrable and monotone initial conditions which may give different orders of the acceleration. Our approach is based, in particular, on the extension of the theory of sub-exponential distributions, which we introduced early in \cite{FT2017b}. 

\textbf{Keywords:} nonlocal diffusion; reaction-diffusion equation; front propagation; acceleration; monostable equation; nonlocal nonlinearity; long-time behavior; integral equation

\textbf{2010 Mathematics Subject Classification:} 35B40, 35K57, 47G20, 45G10
\end{abstract}

\section{Introduction}
We will study non-negative solutions $u:\X\times\R_+\to\R_+:=[0,\infty)$ to the equation
\begin{equation}\label{eq:basicequation}
\begin{cases}
\dfrac{\partial}{\partial t} u(x,t) = \ka \displaystyle \int_\X a(x-y)u(y,t)\,dx-m u(x,t)-u(x,t) (Gu)(x,t), \\[3mm]
u(x,0)=u_0(x).
\end{cases}
\end{equation}
Here $\ka, m >0$; $0\leq a\in L^1(\X)\cap L^\infty(\R)$ with $\int_\X a(x)\,dx=1$; and $G$ is a nonnegative mapping on functions which is acting in $x$, i.e. $(Gu)(x,t):=\bigl(Gu(\cdot,t)\bigr)(x)\geq0$ for $u\geq0$.

We will distinguish two cases for the initial condition $u_0:\X\to\R_+$: 
\begin{gather}
\lim_{x\to\pm\infty} u_0(x)=0,\tag{C1}\label{eq:intcase}\\
\shortintertext{and} \lim_{x\to\infty} u_0(x)=0, \qquad \inf_{x\leq -\rho}u_0(x)>0 \ \text{for some } \rho\geq0.\tag{C2}\label{eq:moncase}
\end{gather}
We will not assume any symmetricity of either $a(x)$ or, in the case \eqref{eq:intcase}, $u_0(x)$; in particular, each of them may behave differently at $\infty$ and $-\infty$.

The function $u(x,t)$ may be interpreted as the local density of an evolving in time system of entities which reproduce themselves, compete, and die. The~reproduction appears according to the dispersion, which is realized via the fecundity rate $\ka$ and the density $a$ of a probability dispersion  distribution. The~death may appear due the constant inner mortality $m>0$ within the system, as well as due to the density dependent rate $Gu\geq0$, which describes a competition within the system. For another interpretation for the equation \eqref{eq:basicequation} rewritten in the reaction-diffusion form \eqref{eq:RDequation} and further references, see below and also \cite{FKT2016}. 

We consider \eqref{eq:basicequation} in the space $E:=L^\infty(\X)$ with the standard $\esssup$-norm.
By a solution to \eqref{eq:basicequation} on $\R_+$, we understand the so-called classical solution, that is a mapping $u:\R_+\to E$ which is continuous in $t\in\R_+$ and continuously differentiable (in the sense of the norm in $E$) in $t\in(0,\infty)$.

We start with the following assumptions:
\begin{assum}\label{assum:kappa>m}
        \beta:=\ka-m>0.
\end{assum}
\vspace{-1.35\baselineskip}
\begin{assum}\label{assum:Gpositive}
        \begin{gathered}
        \textsl{There exists $\theta>0$, such that, for each } 0\leq v\leq\theta,\\
                0=G0 \leq Gv\leq G\theta=\beta.
        \end{gathered} 
\end{assum} 
Here and below we write $v\leq w$ for $v,w\in E$, if $v(x) \leq w(x)$ for a.a.~$x\in \X$. Note that we will often just write $x\in\X$ omitting `for a.a.' before this.

As a result, $u\equiv0$ and $u\equiv\theta$ are stationary solutions to \eqref{eq:basicequation}. 
The rest of assumptions, \eqref{assum:Glipschitz}--\eqref{assum:approx_of_basic}, are considered in Section~\ref{sec:assum} below. In particular, they ensure that, between $0$ and $\theta$, there are not other constant stationary solutions to \eqref{eq:basicequation}; and also that $u\equiv0$ is an asymptotically unstable solution and $u\equiv\theta$ is an asymptotically stable one. Because of this, the equation \eqref{eq:basicequation} belongs to the class of the so-called monostable equations, see e.g. \cite{BH2002}. 
One can also rewrite \eqref{eq:basicequation} in the so-called reaction-diffusion form
\begin{equation}\label{eq:RDequation}
\begin{cases}
\dfrac{\partial}{\partial t} u(x,t)=\ka(a*u)(x,t)-\ka u(x,t)+(Fu)(x,t),\\[2mm]
u(x,0)=u_0(x),
\end{cases}
\end{equation}
where the symbol $*$ stands for the classical convolution on $\R$, i.e.
\begin{align}\label{eq:defconv}
(a*v)(x)&:=\int_\R a(x-y)v(y)\,dy, \quad x\in\X,
\\\shortintertext{and the reaction $F$ is given by}
\label{eq:FthroughG}
Fv&:=v(\beta -Gv), \quad v\in E.
\end{align}
Then, under assumptions \eqref{assum:kappa>m}--\eqref{assum:Gpositive}, we will have that
\begin{equation}\label{eq:propertiesofF}
F\theta=F0=0\leq Fv \leq \beta v, \quad 0\leq v\leq \theta.
\end{equation}
The assumption \eqref{assum:Glipschitz} below yields, in particular, that $G$ is continuous at $0$ on $\{v\in E: 0\leq v\leq\theta\}$; as a result, we require that the reaction $F$ in \eqref{eq:RDequation} is such that  $\frac{Fv}{v}\to\beta>0$ as $v\to0+$ (both convergences are in $E$). Because of $F0=0$, we get then that the Fr\'echet derivative of $F$ must be a (strictly positive) constant mapping. In particular, we do not allow the degenerate reaction $F'(0)=0$, see e.g. \cite{AC2016} and cf. Example~\ref{ex:local} below. Therefore, we consider a sub-class of monostable reaction-diffusion equations of the form \eqref{eq:RDequation}. 

The solution $u$ to the equation \eqref{eq:RDequation} may be interpreted as a density of a species which invades according to a nonlocal diffusion within the space $\X$ meeting a reaction $F$, see e.g. \cite{Fif1979,Mur2003}.
In the recent decade, there is a growing interest to the study of nonlocal monostable reaction-diffusion equations,
see e.g. \cite{PS2005,BCGR2014,CDM2008,Yag2009,Gar2011,CD2005,SLW2011}; for the origins of the topic see also \cite{Sch1980,Die1978a,Aro1977,Wei1978}. 

We will distinguish two main classes of the examples for $G$ or $F$, which fulfill the assumptions of Section~\ref{sec:assum}; see \cite[Examples 1.6--1.8]{FKT2016} for further details and references. 
Note that, in both examples of $F$ below, the mapping $Gu=\beta-\frac{Fu}{u}$ is well-defined, cf. \eqref{eq:FthroughG}.

\begin{example}[Reaction--diffusion equation with a local reaction]\label{ex:local}
Consider \eqref{eq:RDequation} with $F(u) = f(u)$ for a function $f:\R\to\R$ which satisfies the following assumptions, for some $\theta>0$,
\begin{equation*}
\begin{gathered}
        f \textsl{ is Lipschitz continuous on } [0,\theta]; \\
        \lim\limits_{r\to 0 +} \frac{f(r)}{r} = \beta;\\ 
        f(0)=f(\theta)=0;\quad 0<f(r) \leq \beta r,  \ r\in(0,\theta). 
\end{gathered}
\end{equation*}
In particular, if $f$ is differentiable at $0$, then we require $f'(0)=\beta>0$. 
\end{example}

\begin{example}[Spatial logistic equation and its generalizations]\label{ex:spataillogistic}
Consider a function $0\leq a^-\in L^1(\R)$ with $\int_\R a^-(x)\,dx=1$, such that, for some $\delta>0$,
\begin{equation}\label{eq:sepfromzerobeta}
        \ka a(x) - \beta a^-(x) \geq \delta\1_{B_\delta(0)}(x), \qquad x\in\X.
\end{equation}
Here and below, 
\[
B_r(x_0):=[x_0-r,x_0+r], \quad r>0, \ x_0\in\R.
\] 
Take an arbitrary $\theta>0$ and consider \eqref{eq:RDequation} with 
\begin{equation}\label{eq:ourexampleforF}
Fu=\gamma_k u(\theta - a^-*u)^k, \qquad \gamma_k:=\frac{\beta}{\theta^k}, \ k\in\N.
\end{equation}
\end{example}

To formulate our main result, we start with the following definition.
\begin{definition} \label{def:ltrt}
Let $\beta>0$ be given by \eqref{assum:kappa>m}. 
\begin{enumerate}
        \item Let $b:\R\to\R_+$ be continuous and strictly decreasing on $(\rho,\infty)$, for some $\rho>0$, with $\lim\limits_{x\to\infty} b(x)=0$. Then, for some $\tau>0$, there exists a function $r(t)=r(t,b)$, $t>\tau$, which
        uniquely solves the equation
\begin{equation}\label{eq:explicit}
b\bigl(r(t)\bigr)=e^{-\beta t}, \quad t>\tau,
\end{equation}
and $r(t)\to\infty$, $t\to\infty$. 
        \item Similarly, if the function $b$ is continuous and strictly increasing on $(-\infty,-\rho)$ with $\lim\limits_{x\to-\infty} b(x)=0$, then one can define $l(t)=l(t,b)\to\infty$, $t\to\infty$ as the unique solution to the equation
\begin{equation}\label{eq:explicit2}
b\bigl(-l(t)\bigr)=e^{-\beta t}, \quad t>\tau.
\end{equation}
\end{enumerate}
\end{definition}

In other words, $r(t)$ and $l(t)$ are given through the inverse functions to $-\log b$, namely, for $t>\tau$,
\begin{equation}\label{eq:coollogbinv}
r(t,b)=\bigl(-\log b\upharpoonright_{\R_+}\bigr)^{-1}(\beta t), \qquad l(t,b)=\bigl(-\log b\upharpoonright_{\R_-}\bigr)^{-1}(\beta t). 
\end{equation}

We are going to find sufficient conditions on $a$ and $u_0$, such that the corresponding solution $u$ to \eqref{eq:basicequation}, in the case  \eqref{eq:intcase}, becomes arbitrary close to $\theta$ (as $t$ goes to $\infty$) inside the (expanded) interval $(-l(t),r(t))$ and becomes arbitrary close to $0$ outside of this interval. In the case \eqref{eq:moncase}, one has to consider 
the interval $(-\infty,r(t))$ instead. Here $l(t)=l(t,b)$ and $r(t)=r(t,b)$, where, cf.~\eqref{eq:coollogbinv},
\begin{align}
&\log b(x)\sim \log \max\bigl\{ a(x), \ u_0(x)\bigr\}, \ x\to\infty, && \text{if \eqref{eq:intcase} holds, }\label{eq:defintc}\\[3mm]
&\log b(x)\sim \log \max\biggl\{ \int_x^\infty a(y)\,dy, \ u_0(x)\biggr\}, \ x\to\infty,  && \text{if \eqref{eq:moncase} holds,}\label{eq:defmonc}
\end{align}
and we suppose that the function $b$ has regularly heavy tails at $\infty$, see Definition~\ref{def:reght} below. Here and below the notation $f(x)\sim g(x)$, $x\to\infty$ means that $\frac{f(x)}{g(x)}\to1$, $x\to\infty$. In~particular, for any small $\eps,\delta>0$, we will have that
\[
\bigl\{x>0\bigm\vert u(x,t)\in(\delta,\theta- \delta)\bigr\}\subset \bigl(r(t-t\eps), r(t+t\eps)\bigr)
\]
for all $t$ big enough; in the case \eqref{eq:intcase}, the corresponding result also holds for negative values of $x$ and the function $l(t)$ instead.

\begin{definition}\label{def:reght}
\begin{enumerate}
        \item A bounded function $b:\R\to\R_+$ is said to have a regularly heavy tail at $\infty$ in the sense of densities, if
        $b\in L^1(\R_+)$, $b$ is decreasing to $0$ and convex on $(\rho,\infty)$ for some $\rho>0$, and
\begin{gather}b\bigl(x+y\bigr)\sim b(x), \quad y\in\R,\ x\to\infty, \label{eq:long-tailed_dens}\\ 
\int_0^x b(x-y) b(y) \,dy\sim 2 \biggl(\int_{\R_+}b(y)\,d y\biggr) b(x) , \quad x\to\infty.\label{eq:sub-exp_dens}
\end{gather}
A bounded function $b:\R\to\R_+$ is said to have a regularly heavy tail at $-\infty$ in the sense of densities, if the function $b(-x)$ has a regularly heavy tail at $\infty$ in the sense of densities.
\item A bounded function $b:\R\to\R_+$ is said to have a regularly heavy tail at $\infty$ in the sense of distributions, if $b$ is decreasing to $0$ on $\R$, $b$ is convex on  $(\rho,\infty)$ for some $\rho>0$, and
\begin{equation}
-\int_0^x b(x-y)\, d b(y) \sim 2 b(-\infty) b(x) , \quad x\to\infty,\label{eq:sub-exp_distr}
\end{equation}
where $db(y)$ is the Lebesgue--Stieltjes measure associated with $b$.
\end{enumerate}
\end{definition}

\begin{remark}\label{rem:longtaileddistr}
By \cite[Lemmas 3.2, 3.4 and Definition~2.21]{FKZ2013}, \eqref{eq:sub-exp_distr} implies \eqref{eq:long-tailed_dens}.
\end{remark}

\begin{remark}
Note that if $b:\R\to\R_+$ has a regularly heavy tail at $\infty$ in the sense of densities and $b\in L^1(\R)$, then the function
\begin{equation}\label{eq:primeb}
B(x):=\int_x^\infty b(y)\,dy, \quad x\in\R
\end{equation}
 has a regularly heavy tail at $\infty$ in the sense of distribution. The inverse statement is not, in general, true, cf. \cite[Section 4.2]{FKZ2013}.
\end{remark}

Examples of functions with regularly heavy tails at $\infty$  in the sense of densities are the following: 
\begin{equation}\label{eq:examplesofS}
\begin{aligned}
&(\log x)^\mu x^{-q}, &&  \qquad  && (\log x)^\mu  x^\nu \exp\bigl(-p(\log x)^q\bigr), \\
&(\log x)^\mu x^\nu \exp\bigl(-x^\alpha\bigr),  && \qquad &&   
(\log x)^\mu x^\nu \exp\Bigl(-\frac{x}{(\log x)^q}\Bigr),
\end{aligned}
\end{equation}
where $p>0$, $q>1$, $\alpha\in(0,1)$, $\nu,\mu\in\R$. See also Lemma~\ref{le:basicforb} below for a sufficient condition, which can be checked for further `intermediate' asymptotics at $\infty$.  
To get examples of functions with regularly heavy tails at $\infty$  in the sense of distributions, one can use \eqref{eq:primeb}. 

Note that, see Lemma~\ref{le:basicforb} for details, any $b$ with a regularly heavy tail at $\infty$ in the sense of densities  is such that, for each $k>0$,
\[
e^{kx}b(x)\to\infty, \quad x\to\infty;
\] 
this explains the name: the tail of $b$ at $\infty$ is `heavier' than the tail of an exponential function. By~Remark~\ref{rem:longtaileddistr}, the same property has each $b$ with a regularly heavy tail at $\infty$ in the sense of distributions. 

Now one can formulate our main result; recall that the exact formulations for the assumptions \eqref{assum:Glipschitz}--\eqref{assum:approx_of_basic} are given in Section~\ref{sec:assum} below.
\begin{theorem}\label{thm:fullmain}
Let either \eqref{assum:kappa>m}--\eqref{assum:improved_sufficient_for_comparison} hold or \eqref{assum:approx_of_basic} hold.  Let $0\leq u_0\leq\theta$, $u_0\not\equiv0$ and $u$ be the corresponding solution to \eqref{eq:basicequation}. 
\begin{enumerate}
        \item Let $u_0$ satisfy \eqref{eq:intcase} and functions $b,b_1,b_2:\R\to\R_+$ have regularly heavy tails at both $\pm\infty$ in the sense of densities, and the following assumptions hold
        \begin{gather}
        \text{either} \quad u_0(x)\geq b_1(x) \quad \text{or} \quad  a(x)\geq b_1(x), \qquad x\in\R, \label{eq:eqandbdd}\\
        \max\bigl\{ a(x), \ u_0(x)\bigr\}\leq b_2(x), \quad x\in\R.\label{eq:eqandbdd2}
         \end{gather}
         Suppose also that
         \begin{equation}
         \log b_1(x)\sim\log b_2(x)\sim\log b(x), \label{eq:eqandbdd3}
         \end{equation}
         as $x\to\pm\infty$. Then, for each $\eps\in(0,1)$, 
        \begin{align}
                &\lim_{t\to\infty}\essinf_{[-l(t-\eps t,b), r(t-\eps t,b)]}u(x,t)=\theta,\label{eq:main1}\\[2mm]
                &\lim_{t\to\infty}\esssup_{(-\infty,-l(t+\eps t,b)]\cup [r(t+\eps t,b),\infty)}u(x,t)=0.\label{eq:main2}
        \end{align}
        \item Let $u_0$ satisfy \eqref{eq:moncase} and functions $b,b_1,b_2:\R\to\R_+$ have regularly heavy tails at $\infty$ in the sense of distributions, and the following assumptions hold
        \begin{gather}
        \text{either} \quad u_0(x)\geq b_1(x) \quad \text{or} \quad  \int_x^\infty a(y)\,dy\geq b_1(x), \qquad x\in\R, \label{eq:eqandbdd4}\\
        \max\biggl\{ \int_x^\infty a(y)\,dy, \ u_0(x)\biggr\}\leq b_2(x), \quad x\in\R.\label{eq:eqandbdd5}
        \end{gather}
        Suppose also that \eqref{eq:eqandbdd3} holds as $x\to\infty$. Then, for each $\eps\in(0,1)$, 
                \begin{align}
                &\lim_{t\to\infty}\essinf_{(-\infty, r(t-\eps t,b)]}u(x,t)=\theta,\label{eq:main3}\\[2mm]
                &\lim_{t\to\infty}\esssup_{[r(t+\eps t,b),\infty)}u(x,t)=0.\label{eq:main4}
                \end{align}
\end{enumerate}
\end{theorem}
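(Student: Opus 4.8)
The plan is to treat the upper bounds \eqref{eq:main2} and \eqref{eq:main4} by linearisation and the lower bounds \eqref{eq:main1} and \eqref{eq:main3} by constructing truncated sub-solutions, the common engine being the convolution asymptotics of functions with regularly heavy tails, together with a separate comparison lemma for the profiles: under \eqref{eq:eqandbdd3}, for every $\eps'>0$ one has $r\bigl(s(1-\eps'),b\bigr)\leq r(s,b_1)\leq r\bigl(s(1+\eps'),b\bigr)$, and the same for $b_2$, for all $s$ large (symmetrically for $l$ at $-\infty$); hence replacing $b_1,b_2$ by $b$ costs only an arbitrarily small relative change of the time variable, which the $\eps t$ slack in the statement absorbs.

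For the upper bounds, \eqref{eq:propertiesofF} and the comparison principle (available under \eqref{assum:improved_sufficient_for_comparison}, or in its limiting form under \eqref{assum:approx_of_basic}) give $0\leq u\leq v$, where $v$ solves the linear nonlocal equation $\partial_t v=\ka(a*v)-\ka v+\beta v$, i.e.
\[
v(x,t)=e^{-mt}\sum_{n=0}^{\infty}\frac{(\ka t)^{n}}{n!}\,(a^{*n}*u_0)(x),\qquad a^{*0}*u_0:=u_0.
\]
Using the density-sense sub-exponential property of $b_2$, the bound \eqref{eq:eqandbdd2}, and the fact that $a*b_2$ and $b_2*b_2$ have tails of the same order as $b_2$, one should obtain an $n$-uniform estimate $(a^{*n}*u_0)(x)\leq C\,n\,b_2(x)$ for $x\geq x_0$ (the whole-line analogue of the convolution bounds of \cite{FT2017b}; in case \eqref{eq:moncase} one first absorbs the restriction of $u_0$ to $(-\infty,0]$ into $\theta\int_x^{\infty}a^{*n}$ and argues with primitives, using the distribution-sense property \eqref{eq:sub-exp_distr}). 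Summing the series gives $v(x,t)\leq C(1+\ka t)\,e^{\beta t}\,b_2(x)$ for $x\geq x_0$. Since $\log b_2\sim\log b$ and $b\bigl(r(s,b)\bigr)=e^{-\beta s}$, at $x=r(t+\eps t,b)$ one gets $b_2\bigl(r(t+\eps t,b)\bigr)\leq e^{-\beta t(1+\eps/2)}$ for $t$ large, whence $v\bigl(r(t+\eps t,b),t\bigr)\leq C(1+\ka t)\,e^{-\beta\eps t/2}\to0$; monotonicity of $b_2$ on $(\rho,\infty)$ extends this to the whole ray $x\geq r(t+\eps t,b)$, and the symmetric argument at $-\infty$ (in case \eqref{eq:intcase}) completes \eqref{eq:main2} and \eqref{eq:main4}.

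For the lower bounds I would proceed in three stages. First, the assumptions of Section~\ref{sec:assum} together with $u_0\not\equiv0$ and $\frac{Fv}{v}\to\beta>0$ yield the hair-trigger effect $u(\cdot,t)\to\theta$ locally uniformly on $\X$ (cf.\ \cite{FKT2016}), so $u(x,t)\geq\theta-\delta$ for $|x|\leq R$ once $t\geq T(\delta,R)$. Secondly, on the set where $u\leq\eta$ (with $\eta$ small) one has $Gu\leq\eta'$, hence there $\partial_t u\geq\ka(a*u)-\ka u+(\beta-\eta')u$; one then uses $\underline u:=\min\{\eta,w\}$ as a sub-solution of \eqref{eq:basicequation}, where $w$ solves the linear equation with reaction coefficient $\beta-\eta'$ and initial datum $c\,b_1\1_{[-N,N]}$ if $u_0\geq b_1$, or $c\,\1_{[-N,N]}$ if instead $a\geq b_1$, the heavy tail then being generated after one convolution step. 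The truncation makes $\underline u$ a genuine sub-solution in spite of the nonlocality, because on $\{\underline u=\eta\}$ the nonlocal term only increases $\underline u$. The same convolution estimates, now from below, give $w(x,t)\geq c'\,e^{(\beta-\eta')t}\,b_1(x)$ on the relevant growing window, so $u(x,t)\geq\eta$ as soon as $b_1(x)\geq\frac{\eta}{c'}e^{-(\beta-\eta')t}$, i.e.\ for $x\leq r\bigl((1-\tfrac{\eta'}{\beta})t-\tfrac{1}{\beta}\log\tfrac{\eta}{c'},\,b_1\bigr)$; choosing $\eta',\eps'$ small relative to $\eps$ and invoking the comparison lemma, this window contains $\bigl(-l(t-\eps t,b),\,r(t-\eps t,b)\bigr)$ for $t$ large (in case \eqref{eq:moncase}, $u_0$ is already bounded below on $(-\infty,-\rho]$, so only the right window and the primitive of $a$ enter, via \eqref{eq:sub-exp_distr}). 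Thirdly, one upgrades ``$u\geq\eta$ on the window'' to \eqref{eq:main1} and \eqref{eq:main3}: a solution that is $\geq\eta$ on an interval longer than a threshold $L_0(\eta)$ at a base time $s$ is $\geq\theta-\delta$ on a slightly shorter interval at time $s+\tau_0(\eta,\delta)$, a standard monostable consequence of the comparison principle; since in the second step one actually secures a slightly larger window, applying this once at base time $s=t-\tau_0(\eta,\delta)$ gives $u(\cdot,t)\geq\theta-\delta$ on $[-l(t-\eps t,b),r(t-\eps t,b)]$ for $t$ large, and letting $\delta\downarrow0$ yields \eqref{eq:main1} and, identically, \eqref{eq:main3}.

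I expect the principal obstacle to be the convolution step above: converting the pointwise tail asymptotics for regularly heavy tails (which govern one convolution at a time) into estimates uniform in the number $n$ of convolutions and summable against $\frac{(\ka t)^{n}}{n!}$, with error terms that remain negligible \emph{after composition with the rapidly varying inverse} $(-\log b)^{-1}$ at the moving point $r(t\pm\eps t,b)$ — the $\eps t$ slack must absorb all accumulated multiplicative constants, the $\eta'$-loss in the reaction coefficient, and the gap between $b_1,b_2$ and $b$. A secondary technical point is checking that $\min\{\eta,w\}$ is indeed a sub-solution of the genuinely nonlinear nonlocal problem, where one must rely on the structural bound $Gv\leq\beta$ from \eqref{assum:Gpositive} rather than on any spatial localisation of the operator.
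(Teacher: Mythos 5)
Your overall architecture is the paper's: comparison with the linearised equation plus a uniform-in-$n$ convolution bound for \eqref{eq:main2}, \eqref{eq:main4}; a truncated (plateau-plus-decaying-tail) sub-solution combined with the hair-trigger effect for \eqref{eq:main1}, \eqref{eq:main3}; and log-equivalence \eqref{eq:eqandbdd3} to pass between $b_1,b_2$ and $b$ at the cost of an arbitrarily small change of $\eps$ (this is exactly Lemma~\ref{prop:etaepsforweaklyequiv}). However, the step you yourself flag as the principal obstacle is resolved incorrectly: the estimate $(a^{*n}*u_0)(x)\leq C\,n\,b_2(x)$ uniformly in $n$ is \emph{not} what sub-exponentiality delivers. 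The asymptotics $b^{*n}(x)\sim n(\int b)^{n-1}b(x)$ hold for each fixed $n$ only; the uniform-in-$n$ statement is Kesten's bound, \eqref{eq:Kb1} and \eqref{eq:Kb1distr}, with the geometric factor $C_\delta(1+\delta)^{n}$ in place of $n$ (see Proposition~\ref{prop:generalised_kestens}). Your series summation still closes with the correct bound — one gets $e^{\ka(1+\delta)t-mt}b_2(x)$ and chooses $\delta$ so that $\ka(1+\delta)-m=(\ka-m)(1+\eps)-\delta'$ — but as written the key lemma is overstated and, in general, false.

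Two further points in the lower bound are genuinely gapped. First, your justification that $\min\{\eta,w\}$ is a sub-solution ``because on $\{\underline u=\eta\}$ the nonlocal term only increases $\underline u$'' has the inequality pointing the wrong way: there $a*\underline u\leq\eta$, so $\ka(a*\underline u)-\ka\eta\leq 0$ and the nonlocal term \emph{hurts}; what saves the construction (Proposition~\ref{prop:subsoltolinear}) is that near the plateau edge the long-tail property \eqref{eq:long-tailed_dens} forces $(a*g)/g\geq(1-\delta_1)\int_{-h(r_t)}^{l_t}a\to 1$, and the resulting deficit $\ka\delta_1+o(1)$ is beaten by the strictly positive linearisation $\beta-\delta$ of $F$ at $0$ — this is where both the non-degeneracy $F'(0)=\beta>0$ and the choice of $r_t=r(t-\eps t,b)$ rather than $r(t,b)$ enter. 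Second, you never address the drift: for a nonsymmetric kernel with \eqref{assum:first_moment_finite}, the hair-trigger effect \eqref{eq:hair-trig} gives $u(x+t\ma,t)\to\theta$, so your ``upgrade'' step displaces the window by $t_\mu\ma$; absorbing this linear shift into the sublinear change $r(t-\eps_2 t,b)\mapsto r(t-\eps_1 t,b)$ requires the tail-convexity of $b$ and is precisely Proposition~\ref{eq:crucialforlinearshift} (it is the reason convexity at infinity appears in Definition~\ref{def:reght}). The case of infinite first moment, handled in the paper via the approximation \eqref{assum:approx_of_basic}, is likewise untouched. These are concrete, fixable omissions, but each requires an argument that your proposal does not supply.
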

\begin{remark}\label{rem:agreementnonzero}
For a brevity of notations, we treat here and in the sequel the condition $u_0\not\equiv0$ as follows: there exist $\delta,\rho>0$ and $x_0\in\X$, such that $u_0(x)\geq \delta$ for a.a.~$x\in B_\rho(x_0)$. 
\end{remark}
\begin{remark}
We will see in Theorems~\ref{thm:bb:est_below} and \ref{thm:bb:est_above} below, that the assumptions on $b_1$ and $b_2$ may be slightly weaken.
\end{remark}

Stress that the convergences in \eqref{eq:main1}--\eqref{eq:main2} or \eqref{eq:main3}--\eqref{eq:main4} are indeed `accelerated' in $t$, since, because of \eqref{eq:fastewwqr}, each $b:\R\to\R_+$ with regularly heavy tail(s) (in either of senses) satisfies, for each $k>0$,
\begin{equation}\label{eq:acceleff}
r(t,b)-k t\to\infty, \qquad  l(t,b)-k t\to\infty, \qquad t\to\infty.
\end{equation}

The reason to introduce the function $b$ in Theorem~\ref{thm:fullmain} is two-fold. First, we allow some flexibility in the choice of $b_1$ and $b_2$ and hence of $a$ and $u_0$. For example, $b_1$ may be a function from \eqref{eq:examplesofS} with negative values of $\nu$ and $\mu$, whereas $b_2$ may be `the same' function, but with positive values of $\nu$ and $\mu$; then $u_0$ and $a$ (or $\int_x^\infty a$ in the second part of Theorem~\ref{thm:fullmain}) may fluctuate between such $b_1$ and $b_2$. In this case, one can take $b$ equal to `the same' function, but with $\nu=\mu=0$, since then \eqref{eq:eqandbdd3} evidently holds. Secondly, choosing such $b$, one can find $r(t,b)$ explicitly (i.e. \eqref{eq:explicit} can be solved). Namely, cf.~\cite[Example~2.18]{FKT2016}, one has the following values of $r(t)=r(t,b)$:
\begin{equation*}
\begin{aligned}
&b(x)=x^{-q}, &&  \qquad && r(t)= \exp\Bigl(\dfrac{\beta }{q}t\Bigr);\\
& b(x)=\exp\bigl(-p(\log x)^q\bigr),  \qquad && 
&& r(t)=\exp\biggl(\Bigl(\frac{\beta}{p}t\Bigr)^{\frac{1}{q}}\biggr);\\
&b(x)=\exp\bigl(-x^\alpha\bigr),  &&  \qquad && r(t)=(\beta t)^{\frac{1}{\alpha}} ;\\   
&b(x)=\exp\Bigl(-\frac{x}{(\log x)^q}\Bigr), &&  \qquad && r(t)\sim \beta t (\log t)^q, t\to\infty.
\end{aligned}
\end{equation*}
(Recall that here $q>1$, $\alpha\in(0,1)$, $p>0$.) Note that, for $b(-x)$ and $l(t)=l(t,b)$, the same examples hold.

\begin{remark}\label{rem:differentspeeds}
 In view of \eqref{eq:coollogbinv}--\eqref{eq:defmonc}, the asymptotic of $r(t)$ may be different in the cases \eqref{eq:intcase} and \eqref{eq:moncase} for the same kernel $a$. For example, let $a(x)=x^{-q}$, $q>2$, for large $x$; then $\int_x^\infty a(y)\,dy$ is proportional to $x^{-q+1}$ for large $x$. Therefore, if $u_0$ decays at $+\infty$ faster than $x^{-q}$, then, in the case \eqref{eq:intcase}, we will get $r(t)= \exp\bigl(\frac{\beta }{q}t\bigr)$, whereas, in the case \eqref{eq:moncase}, we will get $r(t)= \exp\bigl(\frac{\beta }{q-1}t\bigr)$. Hence the propagation in the latter case will be faster. 
 \end{remark}

Our method is based on the usage of functions with regularly heavy tails because of the following reasons. 
The conditions on $G$ we require imply the effect called the linear determinacy in e.g. \cite{Wei2012}, or linear selection in \cite{LMN2004}, cf. also the pulled fronts in \cite{GGHR2012}. The effect is that the long-time behavior of the solutions to \eqref{eq:basicequation} is well-described by the solutions to the corresponding problem linearized at the unstable stationary solution $u\equiv0$, that is
\begin{equation}\label{eq:linearequation}
\begin{cases}
\dfrac{\partial}{\partial t} w(x,t) = \ka \displaystyle \int_\X a(x-y)w(y,t)\,dx-m w(x,t), \\[3mm]
w(x,0)=u_0(x).
\end{cases}
\end{equation} 
Indeed, note that the term $u(x,t) (Gu)(x,t)$ in  \eqref{eq:basicequation} is small for `big' values of $x$ relatively to $u(x,t)$, provided that $G$ is continuous at $0\in E$. Next, because of \eqref{assum:Gpositive}, we have that $u(x,t)\leq w(x,t)$ for all $x$ and $t$. The solution to \eqref{eq:linearequation} is given through a series of the convolution powers, and the main peculiarity of the functions with regularly heavy tails at infinity is that their convolution powers can be estimated by the functions themselves. For the monotone case related to \eqref{eq:moncase}, it was the classical Kesten's bound for distributions on $\R$, see e.g. \cite[Theorem 3.34]{FKZ2013} and Lemma~\ref{lem:eqqe} below. For the integrable case related to \eqref{eq:intcase}, we used our extension of Kesten's bound to the densities on $\R$, see \cite[Theorem 2.22]{FT2017b} and Lemma~\ref{lem:eqqeour} below. This is the main tool to get \eqref{eq:main2} and \eqref{eq:main4}, see Theorem~\ref{thm:bb:est_above}. 

In order to prove the convergence to $\theta$ as well, namely, to get \eqref{eq:main1} and \eqref{eq:main3}, 
we construct in \eqref{eq:subsol_to_lin_eq_delta} a minorant $g(x,t)$ to the solution $u(x,t)$ to \eqref{eq:basicequation}, which is a sub-solution to the linear equation \eqref{eq:linearequation} with $m$ replaced by $m+\delta$ for a small $\delta>0$. The detailed realization of the lower estimates is given in Section~\ref{sec:convtotheta}.

For an overview of the existing results about the propagation of solutions to~\eqref{eq:basicequation} (even over $\R^d$, $d\geq1$), we refer the reader to \cite[Subsection~1.5]{FKT2016}. In brief, for the case $d=1$ considered in the present paper, the  situation is the following. If both the kernel $a$ and the  initial condition $u_0$ are light-tailed, more precisely, if $a$ is exponentially integrable and $u_0$ is exponentially bounded, then, for example, \eqref{eq:main1}--\eqref{eq:main2} hold for linear $r(t)=c_+ t$ and $l(t)=c_- t$ (with explicit formulas for $c_\pm\in\R$). This case corresponds to the (non-accelerated) linear dispersion spreading.

The accelerated case for the local non-linearity (see Example~\ref{ex:local}) was known in the mathematical biology, see e.g. \cite{MK2003}. The first rigorous result in this direction was done by Garnier~\cite{Gar2011}, who proved an analogue of  \eqref{eq:main1}--\eqref{eq:main2} for a compactly supported initial condition $u_0$ and symmetric heavy-tailed kernel $a$, such that \eqref{assum:first_moment_finite} holds. However, in his approach, instead of the function $r(t+\eps t,b)$
in \eqref{eq:main2} with arbitrary small $\eps>0$, appeared this function with an unknown $\eps>0$, i.e. the result was not sharp. 

Our results in \cite{FKT2016}, being rephrased for the case $d=1$, yield both \eqref{eq:main1}--\eqref{eq:main2} for \eqref{eq:intcase} and \eqref{eq:main3}--\eqref{eq:main4} for \eqref{eq:moncase}, provided that the function $b$ in Theorem~\ref{thm:fullmain} was symmetric for \eqref{eq:intcase} (and  $r(t)=l(t)$ then), and $b$ was the antiderivative, cf. \eqref{eq:primeb}, of a symmetric function for \eqref{eq:moncase}. Note that the functions $a$ and $u_0$ were not need to be symmetric, up to the equivalence \eqref{eq:eqandbdd3} though. In~particular, either $a$ or $u_0$ (but not both) might be still light-tailed. Recently, an alternative approach was proposed in \cite{BGHP2017}, where an analogous result to  \eqref{eq:main1}--\eqref{eq:main2} was obtained (for $d=1$ and in the special case of Example~\ref{ex:local}), provided that both the kernel $a$ and the initial condition $u_0$ are symmetric, and $a$ is heavy-tailed; the technique used there goes back to \cite{ES1989}.  In another recent paper \cite{AC2016}, also for the case of Example~\ref{ex:local}, a similar result was obtained for a non-necessary symmetric  $a$ and for  $u_0$ which satisfies \eqref{eq:moncase} with the additional restriction that $u_0(x)=0$ for large $x$. 

Therefore, the present paper is seemed to be the first one which deals with the case when either of the kernel and the initial condition has (perhaps different) heavy tails at both $\pm\infty$ in the case of an integrable initial condition, or considers a monotone-like initial condition  which is not necessarily vanishing at $+\infty$. We stress that Theorem~\ref{thm:fullmain} shows that the acceleration for the propagation of the solution to \eqref{eq:basicequation} to the right direction is fully determined by the right tails of either $a$ or $u_0$.

Note also the effect similar to the observed in Remark~\ref{rem:differentspeeds} about the possibility of different speeds for the cases \eqref{eq:intcase} and \eqref{eq:moncase}  was also shown for an analogue of \eqref{eq:basicequation} with the fractional Laplacian (in particular, when $a$ is singular and non-integrable), see \cite{CR2013,FY2013}.

\section{Assumptions and properties}\label{sec:assum}
Describe now the rest of our assumptions. The first ones guarantee the existence-uniqueness and comparison results of Proposition~\ref{thm:existandcompared} below. 

\vspace{-0.95\baselineskip}
\begin{assum}\label{assum:Glipschitz}
        \begin{gathered}
                \textsl{There exists $\lt>0$, such that, for each }0\leq v,w \leq \theta\\
                \|Gv-Gw\| \leq \lt \|v-w\|.
        \end{gathered}
\end{assum}
\vspace{-0.95\baselineskip}
\begin{assum}\label{assum:sufficient_for_comparison}
        \begin{gathered}
                \textsl{For some $p\geq0$ and for each $0\leq v\leq w\leq\theta$},\\
        \ka a*v -v\, Gv + pv \leq \ka a*w -w\, Gw + pw.
        \end{gathered}
\end{assum}

\begin{proposition}[\!\!\!{\cite[Theorems 2.1, 2.2, Proposition 4.2]{FT2017a}}]\label{thm:existandcompared}
Let assumptions \eqref{assum:kappa>m}--\eqref{assum:sufficient_for_comparison} hold, and $0\leq u_0\leq \theta$. Then, for each $T>0$, there exists a unique solution $u=u(x,t)$ to \eqref{eq:basicequation} for $t\in[0,T]$; and 
\begin{equation}\label{eq:mainprop}
0\leq u(\cdot,t)\leq\theta \quad \text{for all } t>0.
\end{equation}
 Moreover, let $0\leq v_0\leq\theta$ and $v=v(x,t)$ be the corresponding solution to $\eqref{eq:basicequation}$; then $u_0\leq v_0$ implies that 
 \begin{equation}\label{eq:compofsol}
 0\leq u(\cdot,t)\leq v(\cdot,t)\leq \theta\quad \text{for all } t>0.
 \end{equation}
\end{proposition}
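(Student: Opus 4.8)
The statement is established in \cite{FT2017a}; here I only outline the strategy I would follow. The plan is to pass to the mild (integral) formulation of \eqref{eq:basicequation} and to interlock a Banach fixed-point argument with a monotone iteration. Fix the constant $p\geq0$ from \eqref{assum:sufficient_for_comparison} and set $\la:=m+p>0$. Rewriting \eqref{eq:basicequation} as $\frac{\partial}{\partial t}u+\la u=\Psi(u)$ with
\[
\Psi(v):=\ka\,a*v-v\,Gv+pv,\qquad v\in E,
\]
one sees by the Duhamel principle that a classical solution on $[0,T]$ is exactly a fixed point in $C([0,T],E)$ of the map
\[
(\mathcal{A}u)(t):=e^{-\la t}u_0+\int_0^t e^{-\la(t-s)}\,\Psi\bigl(u(s)\bigr)\,ds, \qquad t\in[0,T].
\]
Three facts about $\Psi$ on the order interval make $\mathcal{A}$ tractable: (i) since $\|a\|_{L^1(\X)}=1$ and, by \eqref{assum:Gpositive}--\eqref{assum:Glipschitz}, the map $v\mapsto v\,Gv$ is Lipschitz on $\{0\leq v\leq\theta\}$ with constant $\theta\lt+\beta$ (using $0\leq Gv\leq\beta$), the map $\Psi$ is Lipschitz there with constant $L:=\ka+\theta\lt+\beta+p$; (ii) by \eqref{assum:sufficient_for_comparison}, $\Psi$ is order-preserving on $\{0\leq v\leq\theta\}$; (iii) $\Psi(0)=0$ and $\Psi(\theta)=\la\theta$, whence $(\mathcal{A}0)(t)=e^{-\la t}u_0$ and $(\mathcal{A}\theta)(t)\equiv\theta$. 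Combining (ii)--(iii) shows that $\mathcal{A}$ maps the complete metric space $K_T:=\{u\in C([0,T],E): 0\leq u(t)\leq\theta \text{ for all }t\in[0,T]\}$ into itself.

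First I would obtain local existence, uniqueness and the bound \eqref{eq:mainprop} together. For $T$ so small that $\la^{-1}(1-e^{-\la T})L<1$, the map $\mathcal{A}$ is a contraction on $K_T$, so it has a unique fixed point $u\in K_T$, which is the classical solution on $[0,T]$ and automatically satisfies $0\leq u(\cdot,t)\leq\theta$. Uniqueness among all classical solutions follows from the local Lipschitz property of $\Psi$ together with the fact that, by the sub/supersolution structure encoded in (ii)--(iii) (namely that $0$ and $\theta$ are, respectively, a sub- and a supersolution), any classical solution issued from $0\leq u_0\leq\theta$ stays in $[0,\theta]$, so that a Gronwall estimate on a short interval applies; bootstrapping in $t$ finishes uniqueness. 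Since the a priori bound $[0,\theta]$ is uniform in $T$, the local solution extends to each $[0,T]$, which yields a global solution and \eqref{eq:mainprop}.

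It remains to prove the comparison \eqref{eq:compofsol}. Given $0\leq u_0\leq v_0\leq\theta$ with solutions $u,v$, I would set $v^{(0)}:=u$ and define inductively
\[
v^{(n+1)}(t):=e^{-\la t}v_0+\int_0^t e^{-\la(t-s)}\,\Psi\bigl(v^{(n)}(s)\bigr)\,ds.
\]
Then $v^{(1)}\geq v^{(0)}$ because $v_0\geq u_0$ and $v^{(0)}=u$ solves the integral equation with datum $u_0$; inductively, the monotonicity (ii) of $\Psi$ makes $(v^{(n)})$ nondecreasing, while $v^{(0)}\leq\theta$ and $\mathcal{A}\theta=\theta$ keep it below $\theta$. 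Passing to the limit — the convergence being in the norm of $C([0,T_0],E)$ on a short interval by the contraction estimate, which identifies the limit with the solution $v$, and then propagating forward in $t$ — gives $u=v^{(0)}\leq v\leq\theta$.

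The step I expect to be the main obstacle is the bookkeeping around the monotone iteration: the monotone limit is a priori only pointwise, so one must run it in lockstep with the contraction estimate (on each short time step, restart from the current approximant) to see that the limit really is the classical solution $v$; the same care is needed to pin down that an arbitrary classical solution with data in $[0,\theta]$ cannot leave $[0,\theta]$. Once the order-preserving property \eqref{assum:sufficient_for_comparison} and the Lipschitz bound on $\Psi$ over $[0,\theta]$ are in hand, the remaining estimates are routine.
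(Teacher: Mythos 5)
The paper itself offers no proof of this proposition: it is imported wholesale from \cite{FT2017a} (Theorems 2.1, 2.2 and Proposition 4.2 there), so there is no in-paper argument to compare against. Your sketch follows the standard route that the cited reference takes — mild/Duhamel formulation with $\la=m+p$, Banach fixed point on the order interval, and a monotone Picard iteration interlocked with the contraction for the comparison — and the individual computations you record are correct: $\Psi$ is Lipschitz on $\{0\leq v\leq\theta\}$ with constant $\ka+\theta\lt+\beta+p$ by \eqref{assum:Gpositive}--\eqref{assum:Glipschitz}, it is order-preserving there by \eqref{assum:sufficient_for_comparison}, and $(\mathcal{A}0)(t)=e^{-\la t}u_0\geq0$ together with $\mathcal{A}\theta\equiv\theta$ does give invariance of $K_T$ and hence \eqref{eq:mainprop} for the fixed point. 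The monotone iteration for \eqref{eq:compofsol} is also sound, since it coincides with the Picard iteration and therefore converges in norm on each short time step.

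The one place where your outline has a real hole — which you flag but do not close — is uniqueness among \emph{all} classical solutions and the claim that an arbitrary classical solution with data in $[0,\theta]$ cannot leave $[0,\theta]$. Assumptions \eqref{assum:Gpositive}--\eqref{assum:sufficient_for_comparison} control $G$ only on the order interval $\{0\leq v\leq\theta\}$, so the ``Gronwall estimate'' and the sub/supersolution comparison you invoke are circular as stated: you may only use the Lipschitz bound and the monotonicity of $\Psi$ \emph{after} you know the solution lies in $[0,\theta]$, which is exactly what is to be proved. The fixed-point argument as you set it up yields existence and uniqueness only within $K_T$. To obtain the statement in full generality one must either extend $G$ outside the order interval preserving the Lipschitz and monotonicity structure and then identify solutions, or run a first-exit-time/continuity argument, or read ``solution'' as ``$[0,\theta]$-valued solution'' (which is how the proposition is actually used in this paper). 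That missing step is precisely the technical content supplied by \cite{FT2017a}; the rest of your outline is routine and correct.
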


\begin{remark}
Note that the assumption \eqref{assum:kappa>m}  excludes the trivial case when $u(x,t)$ converges to $0$ as $t\to\infty$ uniformly in $x\in\R$. Next, 
we have shown in \cite[Theorems 2.2]{FT2017a} that the assumptions  \eqref{assum:kappa>m}--\eqref{assum:sufficient_for_comparison}
are sufficient to get the comparison principle for solutions to \eqref{eq:basicequation} with initial conditions $0\leq u_0 \leq \theta$ (for the exact formulation see also Lemma~\ref{lem:comparison} below). 
For particular cases of $G$ the assumption \eqref{assum:sufficient_for_comparison} is also a necessary condition for the comparison. 
For instance, in the case of Example~\ref{ex:spataillogistic} with $k=1$, the condition \eqref{assum:sufficient_for_comparison}  reads as
\[
        \ka a(x) \geq (\ka-m) a^-(x), \quad x\in\X,
\]
cf.  \eqref{eq:sepfromzerobeta}. It was shown in \cite[Remark 3.6]{FKT2015}, that if the latter inequality fails, the solution may not satisfy \eqref{eq:mainprop}.
\end{remark}

The rest of assumptions we need, in particular, to show that a solution $u=u(x,t)$ to \eqref{eq:basicequation} converges to $\theta$ locally in space, when time tends to $\infty$, see Proposition~\ref{thm:hairtrigger} below for the exact formulation.
\begin{assum}\label{assum:a_nodeg}
        \textsl{There exist $\rho, \delta>0$, such that} \ a(x)\geq\rho \text{ for } |x|\leq\delta.
\end{assum}
\vspace{-0.95\baselineskip}
\begin{assum}\label{assum:G_locally_continuous}
        \begin{gathered}        
                \textsl{For any $0\leq v_n, v\leq \theta$, such that $v_n \locun v$, $n \to \infty$,}\\
                Gv_n \locun Gv, \ n \to \infty,
        \end{gathered}
\end{assum}
\vspace{-0.95\baselineskip}%
where $\locun$ means uniform convergence on all compact subsets of $\X$. 

\begin{assum}\label{assum:G_commute_T}
        \begin{gathered}
        \textsl{For each $y\in\X$ and $0\leq v\leq\theta$,}\\
                (T_y G v)(x) = (G T_y v)(x)\qquad \text{for }  x\in\X, 
        \end{gathered}
\end{assum}
where $T_y:E\to E$, $y\in\X$ is the translation operator, given by
\begin{equation}\label{shiftoper}
        (T_y v)(x):=v(x-y), \quad x\in\X.
\end{equation}

The condition \eqref{assum:G_commute_T} implies that, for any $r\equiv const\in (0,\theta)$, $Gr\equiv const$. We~will assume then also that
\begin{assum}\label{assum:G_increas_on_const}
  Gr < \beta, \qquad r\in(0,\theta).    
\end{assum}

Finally, we will distinguish two cases. If the condition
\begin{assum}\label{assum:first_moment_finite}
        \int_\X \lvert y\rvert a(y) d y <\infty
\end{assum}
holds, then we set
\begin{equation}\label{firstfullmoment}
  \ma:=\ka \int_\X y a(y)\,dy,
\end{equation}
and assume, additionally to \eqref{assum:sufficient_for_comparison}, that
\begin{assum}\label{assum:improved_sufficient_for_comparison}
        \begin{gathered}
                \textsl{there exist $p\geq 0$, $0\leq b\in C^\infty(\X)\cap L^\infty(\X)$, $\delta>0$, such that}\\ 
                a(x)-b(x)\geq \delta\1_{B_\delta(0)}(x), \quad x\in\X,\\
        w\, Gw\leq \ka b*w + pw \qquad \text{for } 0\leq w\leq\theta.
        \end{gathered}
\end{assum}

Otherwise, if \eqref{assum:first_moment_finite} does not hold, then we assume that,
\begin{assum}\label{assum:approx_of_basic}
        \begin{gathered}
                \textsl{for each $n\in\N$, there exist} \\
                0\leq a_n\in L^1(\X), \quad \ka_n>0, \quad G_n:E\to E, \quad \theta_n\in(0,\theta]\\ 
                \textsl{which satisfy \eqref{assum:kappa>m}--\eqref{assum:improved_sufficient_for_comparison} instead of $a$, $\ka$, $G$, $\theta$, }\\ \textsl{correspondingly, such that}\\
                \ma_n:=\ka_n\int_{\X} ya_n(y)dy\in\R, \qquad \theta_n>\theta- \frac{1}{n}, \qquad n\in\N,  \\ 
                \ka_n a_n*w -wG_nw \leq \ka a*w - wGw\qquad \text{for } 0\leq w\leq \theta_n, \ n\in\N.
        \end{gathered}
\end{assum}

If \eqref{assum:first_moment_finite} does not hold (e.g. $a(x)\sim |x|^{-2}$ as $x\to\infty$ and/or $x\to-\infty$, see the main results below), then, to fulfill \eqref{assum:approx_of_basic} in Examples~\ref{ex:local}--\ref{ex:spataillogistic}, we choose, for $m\in(0,\ka)$, a sequence of sets $\La_n\subset \X$, $\La_n\nearrow \X$, such that $\ka_n: = \ka \int_{\La_n} a(x) dx>m$, and define $a_n(x):= \bigl(\int_{\La_n} a(x) dx\bigr)^{-1}\1_{\La_n}(x) a(x)$, $x\in\X$. In Example~\ref{ex:local}, we take $G_n:=G$, whereas, in Example~\ref{ex:spataillogistic} and $k=1$ in \eqref{eq:ourexampleforF} (the general $k$ can be considered analogously), we set $G_n u:=\ka^-a^-_n*u$, where $a_n^-(x):=\1_{\La_n}(x)a^-(x)$, $x\in\X$. Since $\theta_n:=(\ka_n-m)/\bigl(\ka^- \int_\X a_n^-(x)\,dx\bigr)\to\theta$, $n\to\infty$, one can assume that $\theta_n>\theta-\frac{1}{n}$. 

\begin{proposition}\label{thm:hairtrigger}
Let $0\leq u_0\leq\theta$, $u_0\not\equiv0$, and let $u=u(x,t)$ be the corresponding solution to~\eqref{eq:basicequation}.
\begin{enumerate}
        \item (\!\!\!{\cite[Theorem 2.3]{FT2017a}}) Let \eqref{assum:kappa>m}--\eqref{assum:improved_sufficient_for_comparison} hold and $\ma$ be given by \eqref{firstfullmoment}.
        Then, for each compact set $K\subset \X$,
  \begin{equation}\label{eq:hair-trig}
    \lim_{t\to\infty} \essinf_{x\in K} u(x+t\ma,t)=\theta.
  \end{equation}
  \item (cf. \cite[Theorem 2.5]{FT2017a}) Let \eqref{assum:approx_of_basic} hold.
  Then, for each compact set $K\subset \X$ and for each $n\in\N$,
  \begin{equation}\label{eq:hair-trig-mod}
    \liminf_{t\to\infty} \essinf_{x\in K} u(x+t\ma_n,t)\geq\theta- \frac{1}{n}.
  \end{equation}
\end{enumerate}
\end{proposition}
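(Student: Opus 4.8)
I would prove only the lower bounds in \eqref{eq:hair-trig} and \eqref{eq:hair-trig-mod}, since $u(\cdot,t)\le\theta$ for all $t>0$ by \eqref{eq:mainprop}; everything is then obtained by comparison, using Proposition~\ref{thm:existandcompared} (and the sub/supersolution comparison of Lemma~\ref{lem:comparison}) together with the translation invariance of \eqref{eq:basicequation} guaranteed by \eqref{assum:G_commute_T}. First I would reduce to a small localized initial datum: by Remark~\ref{rem:agreementnonzero} and a translation, $u_0\ge\delta\1_{B_\rho(0)}$, and it suffices to treat the solution started from $\delta\1_{B_\rho(0)}$. The second ingredient is that, since $G0=0$ and $G$ is Lipschitz near $0$ by \eqref{assum:Glipschitz}, one has $Fv\ge(\beta-\lt\eta)v$ whenever $0\le v\le\eta$, so a solution staying below a small level $\eta$ is a supersolution of the linear dispersion equation with net growth rate $\beta-\lt\eta>0$; moreover $Fr>0$ for $r\in(0,\theta)$ by \eqref{assum:G_increas_on_const}, so the spatially constant solution $\xi(t)$ of \eqref{eq:basicequation} solving $\dot\xi=F\xi$ increases to $\theta$.

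For part~1, the heart of the matter is a lower bound with a front travelling at the linear speed $\ma$, which is finite by \eqref{assum:first_moment_finite}. I would build a compactly supported subsolution of \eqref{eq:basicequation} of the form $g(x,t)=\alpha(t)\,\varphi(x-t\ma)$, with $\varphi\in C_c^\infty(\R)$ a fixed, very gently sloped plateau, $0\le\varphi\le\|\varphi\|_\infty=1$, supported in a large interval $[-R,R]$, and $\alpha$ growing exponentially from a tiny value. On the flat part $\ka(a*\varphi)-\ka\varphi\ge-\ka\eps\varphi$ as soon as $\int_{\{|y|\le R/2\}}a\ge1-\eps$; on the slopes, the translation term $\ma\varphi'$ cancels the first-order contribution $\ka\bigl(\int y\,a(y)\,dy\bigr)\varphi'$ of $\ka(a*\varphi-\varphi)$, the remainder being controlled by $\|\varphi''\|_\infty$ and by $\int_{\{|y|>L\}}(1+|y|)a(y)\,dy$ — this is precisely where finiteness of the first moment is used, while a possibly infinite variance is harmless since $\varphi$ may be flattened. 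Choosing $\varphi$, then $\eps,\eta$ small so that $\beta-\lt\eta-\ka\eps>0$, $g$ is a subsolution while $\alpha(t)\le\eta$, with $\alpha$ growing at this positive rate. After a short waiting time (using \eqref{assum:a_nodeg} to make the solution from $\delta\1_{B_\rho(0)}$ positive on all of $B_R(0)$), the comparison principle gives $u\ge g$; since $\alpha$ reaches the level $\eta$ in finite time, and restarting at later times, one gets that, for some $\eps_0>0$ and every fixed $R'$, $u(\cdot,t)\ge\eps_0\1_{B_{R'}(t\ma)}$ for all $t$ large.

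It remains to raise this moving plateau to $\theta$. For large $R$ the truncated linearization $v\mapsto\ka\int_{B_R(0)}a(\cdot-y)v(y)\,dy-\ka v+\beta v$ on $B_R(0)$ has principal eigenvalue $\lambda_R\to\beta>0$ with positive principal eigenfunction $\varphi_R$ ($\|\varphi_R\|_\infty=1$, $\varphi_R\to1$ locally uniformly); hence a small fixed multiple $\eps_2\varphi_R$ is a strict subsolution of the stationary Dirichlet problem on $B_R(0)$, while $\theta$ is a supersolution, and monotone iteration yields a stationary solution $\psi_R$ with $\eps_2\varphi_R\le\psi_R\le\theta$ on $B_R(0)$, $\psi_R=0$ outside; extended by $0$, $\psi_R$ is a subsolution of \eqref{eq:basicequation}. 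Since, for a suitable fixed $\eps_3\in(0,\eps_0]$, a continuous bump of height $\eps_3$ supported in $B_R(0)$ is itself a subsolution of this truncated stationary problem, the corresponding solution of \eqref{eq:basicequation} increases in time to $\psi_R$; sliding this bump under the moving plateau of the previous step and using comparison (the resulting spatial shift being a fixed number once $R$ is fixed) gives $\liminf_{t\to\infty}\essinf_{x\in K}u(x+t\ma,t)\ge\min_{K'}\psi_R$ for a suitable compact $K'$ and every sufficiently large $R$. Finally $\psi_R\uparrow\psi_\infty$, a stationary solution of \eqref{eq:basicequation} on $\R$ with $\psi_\infty\ge\eps_2>0$ everywhere (since $\psi_R\ge\eps_2\varphi_R$ and $\varphi_R\to1$ locally uniformly); comparing $\psi_\infty$ with $\xi(t)$ above forces $\psi_\infty\equiv\theta$, so $\psi_R\to\theta$ locally uniformly by Dini, and \eqref{eq:hair-trig} follows.

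For part~2, if \eqref{assum:first_moment_finite} fails one applies part~1 to the solution $u^{(n)}$ of \eqref{eq:basicequation} with $a,\ka,G,\theta$ replaced by $a_n,\ka_n,G_n,\theta_n$ from \eqref{assum:approx_of_basic} and with initial datum $\min(\delta,\theta_n)\1_{B_\rho(0)}$, obtaining $\lim_{t\to\infty}\essinf_K u^{(n)}(x+t\ma_n,t)=\theta_n>\theta-\tfrac1n$; since $\ka_n a_n*w-wG_nw\le\ka a*w-wGw$ for $0\le w\le\theta_n$, the solution $u$ is a supersolution of the $n$-th equation, hence $u\ge u^{(n)}$ by its comparison principle (valid since \eqref{assum:kappa>m}--\eqref{assum:improved_sufficient_for_comparison} hold for the $n$-th data), and \eqref{eq:hair-trig-mod} follows. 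The main obstacle is the construction in part~1 of a compactly supported subsolution that genuinely travels with the drift $t\ma$ under a heavy-tailed kernel: the slope estimate must balance the translation speed against the first moment while absorbing the (possibly infinite) variance by flattening the profile; a secondary point is the identification $\psi_\infty\equiv\theta$, i.e.\ the absence of intermediate bounded steady states.
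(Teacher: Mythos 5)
The paper itself contains no proof of this proposition: part~1 is imported verbatim as \cite[Theorem~2.3]{FT2017a}, and the remark immediately following the statement declares part~2 to be ``a straightforward modification'' of \cite[Theorem~2.5]{FT2017a}. So there is no in-paper argument to compare yours against; what can be assessed is whether your outline is sound. Your reduction of part~2 to part~1 is indeed the intended route, but the comparison must be run in the other direction: you cannot treat $u$ as a supersolution of the $n$-th problem, since Lemma~\ref{lem:comparison} applied to that problem requires the upper function to stay below $\theta_n$, whereas $u$ is only known to satisfy $u\le\theta$ and may exceed $\theta_n$. The correct statement is that $u^{(n)}$, which does satisfy $0\le u^{(n)}\le\theta_n\le\theta$, is a \emph{subsolution of the original equation} by the last inequality in \eqref{assum:approx_of_basic}, so that Lemma~\ref{lem:comparison} for the original equation yields $u^{(n)}\le u$.

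The genuine gap is in part~1, in the travelling compactly supported subsolution $g(x,t)=\alpha(t)\varphi(x-t\ma)$. The required inequality reads $\frac{\alpha'}{\alpha}\varphi\le \ma\varphi'+\ka(a*\varphi-\varphi)+(\beta-\lt\alpha)\varphi$, and since $\ma=\ka\int y\,a(y)\,dy$, the quantity to control is $E(x):=\int a(y)\bigl[\varphi(x-y)-\varphi(x)+y\varphi'(x)\bigr]\,dy$: you need $E(x)\ge -c\,\varphi(x)$ pointwise for some small $c>0$. Your Taylor-plus-moment estimate only gives $E(x)\ge -C\bigl(\|\varphi''\|_\infty L+\text{tail of }(1+|y|)a\bigr)$, an \emph{absolute} constant. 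On the slopes of the plateau $\varphi$ takes every value in $(0,1)$, so near the edge of the support this constant cannot be absorbed into $-c\varphi(x)$, and the subsolution inequality fails there. The same edge obstruction invalidates the claim that a continuous bump of height $\eps_3$ is a subsolution of the truncated stationary problem: for a kernel concentrated near the origin, at the edge one only gets $(a*\chi)(x)\gtrsim\rho\delta\,\chi(x)$ from \eqref{assum:a_nodeg}, which need not dominate $\frac{m+\lt\eps_3}{\ka}\chi(x)$. This boundary-layer difficulty is exactly what makes the nonlocal hair-trigger effect with drift nontrivial, and it is why one works with principal eigenfunctions of truncated operators throughout (as you do only in the stationary step, where there is no drift term to differentiate) or with Weinberger-type recursions, rather than with smooth plateaux. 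As written, the step producing $u(\cdot,t)\ge\eps_0\1_{B_{R'}(t\ma)}$, which is where \eqref{assum:first_moment_finite} and the drift enter, is not established, and with it the whole of part~1; a secondary point needing justification is the locally uniform convergence $\varphi_R\to1$ used to get $\psi_\infty\ge\eps_2$ everywhere.
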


\begin{remark}
Note that the proof of the second statement in Proposition~\ref{thm:hairtrigger} is a straightforward modification of that in \cite[Theorem~2.5]{FT2017a}.
\end{remark}
\begin{remark}
Note also that we required in Definition~\ref{def:reght} an additional convexity at infinity of a regularly heavy-tailed function just to cover the case $\ma\neq0$ in \eqref{firstfullmoment} or $\ma_n\neq0$ in \eqref{assum:approx_of_basic}, see the usage of Proposition~\ref{eq:crucialforlinearshift} in the proof of Theorem~\ref{thm:bb:est_below} below.
\end{remark}

\section{Technical tools}\label{sec:classesoffunctions}
\begin{definition}\label{def:right-sideclasses}
 A function $b:\R\to\R_+$ is said to be 
 \begin{description}
        \item[--] {\em (right-side) long-tailed}, if there exists $\rho\geq0$, such that $b(x)>0$ for all $x\geq\rho$; and,~for~any $y\geq 0$,
                \begin{equation}\label{eq:longtaileddef}
                        \lim_{x\to\infty}\frac{b(x+y)}{b(x)}=1;
                \end{equation}
   \item[--] {\em (right-side) tail-decreasing (tail-continuous, tail-convex, tail-log-convex)}, if\linebreak $b(x)>0$, $x\in(\rho,\infty)$, for some $\rho\geq0$, and $b$ is strictly decreasing to $0$ (respectively, $b$ is continuous, $b$ is convex, $\log b$ is convex) on $(\rho,\infty)$;
   \item[--] {\em sub-exponential on $\R_+$} in the sense of densities, if $b\in L^1(\R_+)\cap L^\infty(\R_+)$, $b$ is long-tailed, and
        \begin{equation}
        \int_0^x b(x-y) b(y) \,dy\sim 2 \biggl(\int_{\R_+}b(y)\,d y\biggr) b(x) , \quad x\to\infty;
    \label{eq:defofsubexponR+}
        \end{equation}
   \item[--] {\em sub-exponential on $\R_+$} in the sense of distributions, if $b\in L^\infty(\R_+)$, $b$ is decreasing to $0$ on $\R_+$, and
     \begin{equation}
        -\int_0^x b(x-y)  \,db(y)\sim 2 b(-\infty )b(x) , \quad x\to\infty,
    \label{eq:defofsubexponR+distr}
        \end{equation}
        where $db(y)$ is the Lebesgue--Stieltjes measure associated with $b$.
 \end{description}
\end{definition}

\begin{remark}
If $b$ is sub-exponential on $\R_+$ in the sense of densities, then the function $\bigl(\|b\|_{L^1(\R_+)}\bigr)^{-1} b(x)$ is a sub-exponential probability density on $\R_+$, cf. e.g. \cite[Definition~4.6]{FKZ2013}.

If $b$ is sub-exponential on $\R_+$ in the sense of distributions, the function $\1_{\R_+}(x)\Bigl(1-\bigl( b(-\infty)\bigr)^{-1} b(x)\Bigr)$ is a sub-exponential probability distribution on $\R_+$, cf.~e.g.~\cite[Definition~3.1]{FKZ2013}.

As it was point out in Remark~\ref{rem:longtaileddistr}, cf. \cite[Lemmas 3.2, 3.4 and Definition~2.21]{FKZ2013}, if $b$ is sub-exponential in the sense of distributions, then \eqref{eq:longtaileddef} holds.
\end{remark}

\begin{lemma}\label{le:basicforb} Let $b:\R\to\R_+$ be (right-side) long-tailed. 
\begin{enumerate}
        \item (\!\!\cite[Lemma 2.17]{FKZ2013}) For each $k>0$,
                \begin{equation}\label{eq:subexplt}
                        \lim_{x\to\infty} e^{kx}b(x)=\infty;
                \end{equation}
        \item (\!\!\cite[Lemma 2.19, Proposition 2.20]{FKZ2013}) There exists a non-decreasing function $h:(0,\infty)\to(0,\infty)$, with
$h(x)<\dfrac{x}{2}$ and $\lim\limits_{x\to\infty}h(x)=\infty$, such that
\begin{equation}\label{eq:uniformlongtailedR}
  \lim_{x\to\infty}\sup_{|y|\leq h(x)}\biggl\lvert\frac{b(x+y)}{b(x)}-1\biggr\rvert= 0;
\end{equation}
        \item (\!\!\cite[Theorem~4.15, Section 4.2]{FKZ2013}) Let, additionally, $b$ be a tail-log-convex function, such that $b\in L^1(\R_+)$; and suppose that the function $h$ in \eqref{eq:uniformlongtailedR} can be chosen such that
\begin{equation}\label{eq:S0}
  \lim_{x\to\infty}x\, b\bigl(h(x)\bigr)=0.
\end{equation}
Then $b$ is sub-exponential on~$\R_+$ in the sense of densities and the function \eqref{eq:primeb}
is sub-exponential on $\R_+$ in the sense of distributions.
\end{enumerate}
\end{lemma}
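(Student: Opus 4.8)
The plan is to treat the three items in order, as they build on one another, and in fact items (1)--(3) are all essentially cited from \cite{FKZ2013}; so what I would really present is how each reduces to the corresponding statement there, together with the small bookkeeping needed to match the one-sided conventions of Definition~\ref{def:right-sideclasses} with the two-sided conventions of \cite{FKZ2013}. For item (1), the point is that a long-tailed $b$ satisfies $\frac{b(x+1)}{b(x)}\to 1$, hence for any $k>0$ we can pick $x_0$ with $b(x+1)\geq e^{-k/2}b(x)$ for $x\geq x_0$; iterating, $b(x_0+n)\geq e^{-kn/2}b(x_0)$, and since $b$ is eventually positive and (by long-tailedness again) $b(x)\sim b(x_0+\lfloor x-x_0\rfloor)$, we get $e^{kx}b(x)\geq c\,e^{kx}e^{-k(x-x_0)/2}=c\,e^{kx_0/2}e^{kx/2}\to\infty$. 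This is exactly \cite[Lemma~2.17]{FKZ2013}; I would just cite it after noting the translation of hypotheses.

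For item (2), the existence of the non-decreasing $h$ with $h(x)<x/2$, $h(x)\to\infty$, and the uniform limit \eqref{eq:uniformlongtailedR} is precisely \cite[Lemma~2.19, Proposition~2.20]{FKZ2013}; the construction there sets, roughly, $h(x):=\min\{\lfloor x/2\rfloor, \sup\{n\in\N : \sup_{|y|\le n}|b(x'+y)/b(x')-1|\le 1/n \text{ for all } x'\ge x\}\}$ (or a continuous regularization thereof), and one checks monotonicity and divergence from the long-tailed property. Again I would only restate and cite.

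For item (3), this is the substantive one: here we invoke \cite[Theorem~4.15]{FKZ2013} together with the discussion in \cite[Section~4.2]{FKZ2013}. I would first recall that $(\|b\|_{L^1(\R_+)})^{-1}b$, restricted to $\R_+$, is a long-tailed probability density which is log-convex in the tail; the hypothesis \eqref{eq:S0}, $x\,b(h(x))\to 0$, is exactly the "insensitivity" condition under which \cite[Theorem~4.15]{FKZ2013} asserts that a log-convex-tailed long-tailed density is sub-exponential in the density sense, i.e. \eqref{eq:defofsubexponR+} holds. To split the convolution integral $\int_0^x b(x-y)b(y)\,dy$, one writes it as $2\int_0^{h(x)}b(x-y)b(y)\,dy + \int_{h(x)}^{x-h(x)}b(x-y)b(y)\,dy$; on the first piece $b(x-y)\sim b(x)$ uniformly by \eqref{eq:uniformlongtailedR}, giving the main term $\sim 2b(x)\int_0^{h(x)}b(y)\,dy\sim 2b(x)\|b\|_{L^1(\R_+)}$, while the middle piece is controlled using log-convexity (which forces $b(x-y)b(y)\le b(x)b(0)$-type bounds, more precisely $b$ log-convex in the tail gives $b(x-y)b(y)\le \max\{b(h(x)),\dots\}\cdot(\cdot)$) and \eqref{eq:S0} to show it is $o(b(x))$. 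Finally, that \eqref{eq:primeb}, $B(x)=\int_x^\infty b(y)\,dy$, is sub-exponential in the distribution sense follows from the standard fact that the tail of a sub-exponential density is a sub-exponential distribution, cf. \cite[Section~4.2]{FKZ2013} (one verifies $-\int_0^x B(x-y)\,dB(y)=\int_0^x B(x-y)b(y)\,dy\sim 2B(-\infty)B(x)$ by the same splitting, now using that $B$ is also long-tailed since $b$ is). The main obstacle is purely expository: making sure the one-sided definitions here line up with the two-sided framework of \cite{FKZ2013} so that each citation is literally applicable; there is no new mathematics, and I would keep the proof to a few lines of "this is \cite[\dots]{FKZ2013} after the obvious identification."
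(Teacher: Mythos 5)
Your proposal matches the paper exactly: the paper gives no proof of Lemma~\ref{le:basicforb} beyond the citations to \cite{FKZ2013} that appear in the statement itself, and your reduction of each item to the corresponding result there (Lemma~2.17 for item~1, Lemma~2.19 and Proposition~2.20 for item~2, Theorem~4.15 and Section~4.2 for item~3) is precisely what is intended. The additional sketches you supply --- the iteration $b(x+1)\geq e^{-k/2}b(x)$ for item~1 and the splitting of $\int_0^x b(x-y)b(y)\,dy$ at $h(x)$ and $x-h(x)$ with log-convexity controlling the middle range for item~3 --- are the standard arguments behind those cited results and are correct.
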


\begin{remark}
By \cite[Proposition 2.15]{FT2017b}, if $b:\R\to\R_+$ is tail-decreasing and $0<h(x)<\frac{x}{2}$ with $\lim\limits_{x\to\infty}h(x)=\infty$, then \eqref{eq:uniformlongtailedR} is equivalent to
\begin{equation*}
\lim_{x\to\infty}\frac{b(x\pm h(x))}{b(x)}=1.
\end{equation*}
\end{remark}

\begin{example}[\!\!\!\protect{\cite[Subsection 3.2]{FT2017b}}]\label{ex:SR}
Let $b:\R\to\R_+$ be a bounded tail-decreasing and tail-log-convex function, such that, for some $C>0$, the function $Cb(x)$ has either of the asymptotics \eqref{eq:examplesofS} as $x\to\infty$, where $p>0$, $q>1$, $\alpha\in(0,1)$, $\nu,\mu\in\R$. Then $h(x)$ in \eqref{eq:uniformlongtailedR} can be chosen such that \eqref{eq:S0} holds; in particular, then $b$ is sub-exponential on $\R_+$. Note also that the functions \eqref{eq:examplesofS} themselves are tail-decreasing and tail-log-convex.
\end{example}

\begin{lemma}\label{lem:eqqeour}
Let   $b\in L^1(\R,\R_+)$   be sub-exponential on $\R_+$ in the sense of densities. Suppose that there exist $\rho, K>0$, such that
\begin{equation}\label{eq:quasi-decreasing}
  b(x+y)\leq K b(x), \quad x>\rho, \ y>0
\end{equation}
(for example, let $b$ be tail-decreasing). 
\begin{enumerate}
        \item (\!\!\cite[Theorem 2.19]{FT2017b}; for $n=2$, see also \cite[Lemma~4.13]{FKZ2013}) For each $n\geq2$, 
        \begin{equation*}
    \lim_{x\to\infty} \frac{b^{*n}(x)}{b(x)}=n\Bigl(\int_\R b(y)\,dy\Bigr)^{n-1}, 
  \end{equation*}
  where $b^{*n}(x)=(\underbrace{b*\ldots * b}_n)(x)$, $x\in\R$, and $*$ is given by \eqref{eq:defconv}.
  \item (\!\!\!\cite[Theorem 2.22]{FT2017b}) Let, additionally, $b$ be bounded and there exist a bounded $d:\R\to\R_+$ which is sub-exponential on $\R_+$, such that \eqref{eq:quasi-decreasing} holds with $b$ replaced by $d$, and, for some $D>0$ and $\rho'>0$,
  \begin{equation}\label{eq:dsaadsewr23}
  b(-x)\leq D \, d(x), \quad  x>\rho'
  \end{equation}
  (for example, let \eqref{eq:dsaadsewr23} hold with $d(x)=\frac{1}{1+|x|^{1+\delta}}$, $x\in\X$, $\delta>0$).
  Then, for any $\delta\in(0,1)$, there exist $C_\delta, x_\delta>0$, such that 
\begin{equation}\label{eq:Kb1}
        b^{*n}(x)\leq C_\delta (1+\delta)^{n}\Bigl(\int_\R b(y)\,dy\Bigr)^{n-1}  b(x)
\end{equation}
for all $x>x_\delta$, $n\in\N$.
\end{enumerate}
\end{lemma}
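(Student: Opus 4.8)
The plan is to reduce everything to the two‑sided convolution identity $b^{*(n+1)}=b^{*n}*b$ and to the sub‑exponential asymptotics \eqref{eq:defofsubexponR+}, exploiting the uniform long‑tailedness from Lemma~\ref{le:basicforb} together with the quasi‑decreasing bound \eqref{eq:quasi-decreasing}. Throughout write $\nu:=\int_\R b(y)\,dy$, and let $h$ be the function from Lemma~\ref{le:basicforb} with $h(x)<x/2$, $h(x)\to\infty$ and $\sup_{\lvert y\rvert\le h(x)}\lvert b(x+y)/b(x)-1\rvert\to0$ as $x\to\infty$; note also that, by $b\in L^1(\R,\R_+)$ and (in part~2) $b\in L^\infty(\R)$, one has $\int_\R b^{*n}=\nu^n$ and $\|b^{*n}\|_{L^\infty(\R)}\le\|b\|_{L^\infty(\R)}\nu^{n-1}$.

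For part~1 I would first settle $n=2$. By the symmetry $y\mapsto x-y$, $b^{*2}(x)=2\int_{-\infty}^{x/2}b(x-y)b(y)\,dy$; split $(-\infty,x/2]$ at $-h(x)$ and $h(x)$. On $(-h(x),h(x))$ the factor $b(x-y)\sim b(x)$ uniformly, so this zone contributes $\sim\nu\,b(x)$; the tail $(-\infty,-h(x)]$ is $o(b(x))$ because $b(x-y)=b(x+\lvert y\rvert)\le Kb(x)$ while $\int_{\{y<-h(x)\}}b(y)\,dy\to0$ (here the integrability and uniform decay of the left tail of $b$, guaranteed by \eqref{eq:dsaadsewr23}, is used); and $[h(x),x/2]$ is $o(b(x))$ by enlarging it to $[h(x),x-h(x)]$ and subtracting the two end pieces $\int_0^{h(x)}$, $\int_{x-h(x)}^{x}$ (each $\sim\|b\|_{L^1(\R_+)}b(x)$ by uniform long‑tailedness) from $\int_0^x b(x-y)b(y)\,dy\sim2\|b\|_{L^1(\R_+)}b(x)$, which is exactly \eqref{eq:defofsubexponR+}. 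Multiplying by $2$ gives $b^{*2}(x)\sim2\nu\,b(x)$. For the inductive step, assume $b^{*n}(x)\sim n\nu^{n-1}b(x)$ and put $g:=b^{*n}\in L^1(\R,\R_+)$, $\int_\R g=\nu^n$. Splitting $(g*b)(x)=\int_\R g(x-y)b(y)\,dy$ into the four zones $(-\infty,-h(x)]$, $(-h(x),h(x))$, $[h(x),x-h(x)]$, $(x-h(x),\infty)$: on $(-h(x),h(x))$ one has $g(x-y)\sim n\nu^{n-1}b(x-y)\sim n\nu^{n-1}b(x)$ uniformly, giving $n\nu^{n-1}\cdot\nu\,b(x)=n\nu^{n}b(x)$; on $(x-h(x),\infty)$, after substituting $z=x-y$ and using $b(x-z)\sim b(x)$ for $\lvert z\rvert\le h(x)$, one gets $\bigl(\int_\R g\bigr)b(x)=\nu^{n}b(x)$; the remaining two zones are $o(b(x))$ by $g(x-y)\le(n\nu^{n-1}+\eps)b(x-y)$, \eqref{eq:quasi-decreasing}, $g\in L^1$ and the $n=2$ estimate. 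Summing yields $(g*b)(x)\sim(n+1)\nu^{n}b(x)$, closing the induction.

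For part~2 (the density analogue of Kesten's bound) the plan is an induction on $n$ with a carefully tuned recursion. Fix $\delta\in(0,1)$ and a small auxiliary $\eta>0$. Using part~1 and long‑tailedness, fix $x_\delta>\rho$ large enough that, for all sufficiently large $x$: $b^{*2}(x)\le(2\nu+\eta)b(x)$, $(1-\eta)b(x)\le b(x-z)\le(1+\eta)b(x)$ for $\lvert z\rvert\le x_\delta$, and $\int_{-x_\delta}^{x_\delta}b(z)\,dz\ge\nu-\eta$. Set $\gamma_n:=\sup_{x>x_\delta}b^{*n}(x)/b(x)<\infty$ (finite by part~1). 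For $x$ large, split $b^{*(n+1)}(x)=\int_{-\infty}^{x-x_\delta}b^{*n}(x-y)b(y)\,dy+\int_{x-x_\delta}^{\infty}b^{*n}(x-y)b(y)\,dy=:J_1+J_2$. In $J_2$, substitute $z=x-y\le x_\delta$: the part $z\in[0,x_\delta]$ is $\le(1+\eta)b(x)\int_0^{x_\delta}b^{*n}(z)\,dz\le(1+\eta)\nu^{n}b(x)$ and, by \eqref{eq:quasi-decreasing}, the part $z<0$ is $\le Kb(x)\int_{-\infty}^{0}b^{*n}(z)\,dz\le K\nu^{n}b(x)$, so $J_2\le(1+\eta+K)\nu^{n}b(x)$. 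In $J_1$ one has $x-y>x_\delta$, hence $b^{*n}(x-y)\le\gamma_n b(x-y)$, so after substituting $z=x-y$, $J_1\le\gamma_n\int_{x_\delta}^{\infty}b(z)b(x-z)\,dz=\gamma_n\bigl(b^{*2}(x)-\int_{-\infty}^{x_\delta}b(z)b(x-z)\,dz\bigr)$; bounding the subtracted term below by $\int_{-x_\delta}^{x_\delta}b(z)b(x-z)\,dz\ge(1-\eta)(\nu-\eta)b(x)$ gives $J_1\le(\nu+C\eta)\gamma_n b(x)$ for some $C$ depending only on $\nu$. Combining, and adding the trivial estimate $b^{*(n+1)}(x)\le\bigl(\|b\|_{L^\infty(\R)}/b(x_\delta)\bigr)\nu^{n}b(x)$ on the bounded range of remaining $x$, one arrives at $\gamma_{n+1}\le(\nu+C\eta)\gamma_n+C'\nu^{n}$ with $C'$ independent of $n$. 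Since $\gamma_1=1$, iterating yields $\gamma_n\le C_\delta\bigl(1+C\eta/\nu\bigr)^{n}\nu^{n-1}$, and choosing $\eta$ so small that $C\eta/\nu\le\delta$ gives \eqref{eq:Kb1}.

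\textbf{Main obstacle.} The delicate point is the constant in the recursion of part~2: it must be $\nu(1+o(1))$ rather than, say, $2\nu$, for otherwise $\gamma_n$ would grow like $(2\nu)^n$ and \eqref{eq:Kb1} would fail. Securing this forces the truncation level $x_\delta$ to be chosen once and for all, independently of $n$, and requires subtracting the bounded‑$z$ mass $\approx\nu\,b(x)$ from the full convolution mass $\approx2\nu\,b(x)$ with uniform (in $x$) control of every error term; this balancing, rather than any single computation, is the crux, and it is exactly where the interplay of long‑tailedness, \eqref{eq:quasi-decreasing} and the left‑tail control \eqref{eq:dsaadsewr23} is essential.
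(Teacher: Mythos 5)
The paper does not prove this lemma at all: both parts are imported verbatim from \cite{FT2017b} (Theorems 2.19 and 2.22; for $n=2$ also \cite[Lemma~4.13]{FKZ2013}), so there is no internal argument to compare against. Your proposal supplies a self-contained proof, and it follows the standard sub-exponential machinery that those references are built on: for part~1 the $h$-window decomposition $(-\infty,-h(x)]\cup(-h(x),h(x))\cup[h(x),x-h(x)]\cup(x-h(x),\infty)$ with the middle zone killed by subtracting the two end pieces from \eqref{eq:defofsubexponR+}, and for part~2 the Kesten-type recursion $\gamma_{n+1}\leq(\nu+O(\eta))\gamma_n+C'\nu^n$ obtained by splitting at a fixed truncation level and subtracting the near-diagonal mass $\approx\nu b(x)$ from $b^{*2}(x)\approx 2\nu b(x)$. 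I checked the zone estimates and the recursion; they close correctly, and you have correctly identified that the whole point is getting the coefficient of $\gamma_n$ down to $\nu(1+o(1))$ with a truncation level independent of $n$. What your write-up buys over the paper is transparency; what it loses is that \cite{FT2017b} proves the statement under hypotheses not subsumed by yours (in particular in higher dimension), which is presumably why the auxiliary majorant $d$ appears in the statement.

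Four points deserve attention. First, in part~1 you invoke \eqref{eq:dsaadsewr23} to control $\int_{y<-h(x)}b(y)\,dy$, but \eqref{eq:dsaadsewr23} is a hypothesis of part~2 only; what you actually need (and have) is $b\in L^1(\R)$ together with $h(x)\to\infty$. Relatedly, your part~2 argument never uses $d$ at all: under the hypotheses as stated here ($b\in L^1(\R,\R_+)$, bounded, quasi-decreasing) the left tail is controlled by $\int_{-\infty}^{0}b^{*n}\leq\nu^n$ and \eqref{eq:quasi-decreasing} alone, so you should either say explicitly that $d$ is not needed in your route or explain where it would enter. Second, the trivial bound on the intermediate range $(x_\delta,M]$ is written as $\|b\|_{L^\infty}/b(x_\delta)$, which presupposes monotonicity; with only \eqref{eq:quasi-decreasing} you get $b(x)\geq b(x_0)/K$ for a fixed $x_0>M$ with $b(x_0)>0$, which suffices but should be stated. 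Third, unrolling $\gamma_{n+1}\leq\mu\gamma_n+C'\nu^n$ with $\mu=\nu(1+C\eta/\nu)$ gives $\gamma_n\leq\mu^{n-1}+C'\sum_{k=1}^{n-1}\mu^{k-1}\nu^{n-k}\leq(1+C'(n-1))\nu^{n-1}(1+C\eta/\nu)^{n-1}$, i.e.\ there is a linear factor in $n$ that your displayed conclusion omits; it is harmlessly absorbed into $(1+\delta)^n$ by shrinking $\eta$ further, but the absorption should be made explicit. Fourth, in the inductive step of part~1 the zone $y\in(x-h(x),\infty)$, i.e.\ $z=x-y\in(-\infty,h(x))$, needs its own sub-split at $z=-h(x)$ (with \eqref{eq:quasi-decreasing} and $b^{*n}\in L^1$ handling $z<-h(x)$); you gesture at this but it should be written out. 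None of these is a gap in the ideas; they are finishing details.
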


\begin{lemma}\label{lem:eqqe}
Let $b:\R\to\R_+$ be bounded, continuous, and decreasing on $\R$ to $0$ function, which is sub-exponential on $\R_+$ in the sense of distributions. Denote $b^{\star 1}(x):=b(x)$, $x\in\R$. For each $n\geq2$, we consider the Lebesgue--Stieltjes integral
\begin{equation}\label{eq:convofdistr}
b^{\star n}(x):=-\int_\R b(x-y)d b^{\star (n-1)}(y), \quad x\in\R.
\end{equation}
\begin{enumerate}
        \item (\!\!\cite[Corollary 3.20]{FKZ2013}) For each $n\geq 2$,
        \begin{equation*}
                b^{\star n}(x)\sim n \bigl(b(-\infty)\bigr)^{n-1} b(x), \quad x\to\infty.
        \end{equation*}
        \item (\!\!\cite[Theorem 3.34]{FKZ2013}) For each $\delta\in(0,1)$, there exists $C_\delta>0$, such that
        \begin{equation}\label{eq:Kb1distr}
                b^{\star n}(x)\leq C_\delta (1+\delta)^n \bigl(b(-\infty)\bigr)^{n-1} b(x), \quad x\geq0.
        \end{equation} 
\end{enumerate}
\end{lemma}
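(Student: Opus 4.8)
The plan is to deduce both statements from the corresponding facts about sub-exponential probability distributions. First I would pass from $b$ to a genuine distribution function. Since $b$ is bounded, continuous, and decreasing to $0$ on $\R$, the limit $b(-\infty)=:\ell$ exists and is positive (otherwise $b\equiv 0$), and one can set
\[
F(x):=1-\frac{b(x)}{\ell}, \qquad x\in\R,
\]
which is a continuous distribution function on $\R$ with $F(+\infty)=1$. The hypothesis that $b$ is sub-exponential on $\R_+$ in the sense of distributions, i.e. \eqref{eq:defofsubexponR+distr}, translates — after expanding $db(y)=-\ell\,dF(y)$ and using $\int_\R dF=1$ — precisely into the statement that the tail $\overline F(x)=1-F(x)=\ell^{-1}b(x)$ satisfies the defining relation of a sub-exponential distribution, namely $\overline{F*F}(x)\sim 2\overline F(x)$ as $x\to\infty$; this is the content of the remark following Definition~\ref{def:right-sideclasses}. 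Then $F$ is sub-exponential in the classical sense of \cite{FKZ2013}.

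Next I would identify the iterated Lebesgue--Stieltjes convolution $b^{\star n}$ with tails of convolution powers of $F$. A direct induction on $n$, using $db^{\star(n-1)}(y)=-\ell^{\,n-1}\,dF^{*(n-1)}(y)$ (where $F^{*k}$ is the $k$-fold convolution of the measure $dF$) and the defining formula \eqref{eq:convofdistr}, gives
\[
b^{\star n}(x)=\ell^{\,n}\bigl(1-F^{*n}(x)\bigr)=\ell^{\,n}\,\overline{F^{*n}}(x), \qquad x\in\R,\ n\geq 1.
\]
The base case $n=1$ is the definition $b^{\star 1}=b=\ell\,\overline F$, and the inductive step is a routine integration-by-parts/Fubini computation which I would not expand here. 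With this identity in hand, statement (1) is immediate from the classical asymptotic $\overline{F^{*n}}(x)\sim n\,\overline F(x)$ for sub-exponential $F$, i.e. \cite[Corollary~3.20]{FKZ2013}: multiplying by $\ell^{\,n}$ and writing $\overline F=\ell^{-1}b$ yields $b^{\star n}(x)\sim n\,\ell^{\,n-1}b(x)$, which is exactly the claim with $b(-\infty)=\ell$. Likewise, statement (2) follows from Kesten's bound for sub-exponential distributions, \cite[Theorem~3.34]{FKZ2013}: for each $\delta\in(0,1)$ there is $C_\delta>0$ with $\overline{F^{*n}}(x)\leq C_\delta(1+\delta)^n\overline F(x)$ for all $x\geq 0$, $n\in\N$; multiplying by $\ell^{\,n}$ gives \eqref{eq:Kb1distr}.

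The main obstacle, such as it is, is purely bookkeeping: verifying the identity $b^{\star n}=\ell^{\,n}\overline{F^{*n}}$ carefully, since the convolution in \eqref{eq:convofdistr} is written with the "wrong-looking" sign and with $b$ itself (a decreasing function) rather than its tail in the integrand, so one must check that the signs and the constant $\ell^{\,n}$ propagate correctly through the induction, and that the measures $db^{\star(n-1)}$ are indeed (up to the factor $-\ell^{\,n-1}$) the probability measures $dF^{*(n-1)}$ — in particular that $b^{\star(n-1)}$ is again decreasing with the right limits at $\pm\infty$, so that the Lebesgue--Stieltjes integral in the next step is well defined. Once this identity is established, everything reduces to quoting the two cited results from \cite{FKZ2013}, and no further analysis is needed. (One should also note that continuity of $b$, hence of $F$, makes the Lebesgue--Stieltjes manipulations and the classical sub-exponentiality theory directly applicable without atoms at the endpoints.)
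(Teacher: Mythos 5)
Your proposal is correct and matches what the paper does: the lemma is stated as a direct citation of \cite[Corollary 3.20]{FKZ2013} and \cite[Theorem 3.34]{FKZ2013}, and the only content is the translation you carry out, namely normalizing $b=\ell\,\overline F$ with $\ell=b(-\infty)$, checking (as in the remark after Definition~\ref{def:right-sideclasses}) that $F$ is sub-exponential in the sense of \cite{FKZ2013}, and verifying the identity $b^{\star n}=\ell^{\,n}\overline{F^{*n}}$ by induction. No gaps.
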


\begin{remark}
Let $b\in L^1(\R)$ and $B$ be given by \eqref{eq:primeb}. Then
\begin{equation*}
B^{\star n}(x)=\int_x^\infty b^{*n}(y)\,dy, \quad x\in\R.
\end{equation*}
Recall that here by $\star$ we denote the convolution \eqref{eq:convofdistr} of decreasing bounded functions on the real line (e.g. tails of probability distributions), whereas by $*$ we denote the convolution \eqref{eq:defconv} of integrable functions on the real line (e.g. probability densities).
\end{remark}

\begin{lemma}[\!\!\protect{\cite[Lemma 2.15]{FKT2016}}]\label{le:nonconstantspeed}
  Let $b:\R \to \R_+$ be (right-side) tail-decreasing and long-tailed function. Then, for any $k>0$,
 \begin{equation}\label{eq:fastewwqr}
 r(t,b)-kt\to\infty, \quad t\to\infty.
 \end{equation} 
\end{lemma}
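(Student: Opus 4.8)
The plan is to work directly with the characterization of $r(t,b)$ as the inverse function of $-\log b$ restricted to $\R_+$, cf.~\eqref{eq:coollogbinv}. Fix $k>0$. Since $b$ is tail-decreasing and long-tailed, part~(1) of Lemma~\ref{le:basicforb} applies with exponent $2k$ (or any positive constant), giving $e^{2kx}b(x)\to\infty$ as $x\to\infty$, equivalently $-\log b(x) - 2kx \to -\infty$, i.e. $-\log b(x) = o(x)$ combined with the stronger statement that $-\log b(x) \le 2kx$ eventually; more usefully, for every $c>0$ one has $-\log b(x) \le c x$ for all $x$ large enough. I would phrase it as: for each $c>0$ there is $x_c$ with $-\log b(x)\le cx$ whenever $x\ge x_c$.

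Next I would translate this into a lower bound on $r(t,b)$. By definition $b(r(t))=e^{-\beta t}$, so $-\log b(r(t)) = \beta t$. Since $r(t)\to\infty$ as $t\to\infty$ (part~(1) of Definition~\ref{def:ltrt}), for $t$ large enough we have $r(t)\ge x_c$, and hence $\beta t = -\log b(r(t)) \le c\, r(t)$, which gives $r(t) \ge \frac{\beta}{c} t$. Now choose $c = \frac{\beta}{2k}$; then $r(t) \ge 2kt$ for all $t$ sufficiently large, and therefore $r(t,b) - kt \ge kt \to \infty$ as $t\to\infty$. This is the whole argument.

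The only subtlety — and the step I would be most careful about — is making sure the inverse-function description \eqref{eq:coollogbinv} is legitimate and that $b(r(t))=e^{-\beta t}$ may indeed be solved for all large $t$: this needs $b$ strictly decreasing to $0$ on some $(\rho,\infty)$, which is exactly the tail-decreasing hypothesis, so that $-\log b$ is a strictly increasing continuous bijection from $(\rho,\infty)$ onto $(-\log b(\rho+), \infty)$; for $t$ with $\beta t$ in this range (i.e. $t>\tau$ for suitable $\tau$) the solution $r(t)$ exists, is unique, and tends to $\infty$. Apart from that bookkeeping, there is no real obstacle: the heavy-tail (superpolynomial, indeed super-exponential) decay of $b$ forces $r(t)$ to grow faster than any linear function, which is precisely \eqref{eq:fastewwqr}. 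I would close by remarking that the same reasoning applied to $b(-x)$ yields the companion statement $l(t,b)-kt\to\infty$ used in \eqref{eq:acceleff}, though that is not part of the present lemma's statement.
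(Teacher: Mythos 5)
Your argument is correct: the paper gives no proof of this lemma (it is quoted from \cite[Lemma 2.15]{FKT2016}), and your derivation --- combining $e^{cx}b(x)\to\infty$ from Lemma~\ref{le:basicforb}(1) with $-\log b(r(t))=\beta t$ to obtain $\beta t\le c\,r(t)$, hence $r(t)\ge \tfrac{\beta}{c}t$ with $c=\tfrac{\beta}{2k}$ --- is exactly the standard route that the surrounding text points to. The only cosmetic caveat is that solvability of $b(r(t))=e^{-\beta t}$ for large $t$ also uses tail-continuity of $b$ (which Definition~\ref{def:ltrt} builds in), not merely the tail-decreasing hypothesis; that is an imprecision in the lemma's stated hypotheses rather than a gap in your proof.
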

Evidently, if $b(-x)$ is (right-side) tail-decreasing and long-tailed, \eqref{eq:fastewwqr} holds for $l(t,b)$.

For Theorem \ref{thm:fullmain}, we will use the functions $r(t\pm\eps t,b)$ and $l(t\pm\eps t,b)$ for an arbitrary small $\eps>0$. This allow us to estimate $r(t\pm\eps t,b)$, where, for example, $b$ is given by \eqref{eq:examplesofS} with $\mu,\nu\in\R$ by $r(t\pm\tilde{\eps} t,b_0)$, where $b_0$ corresponds to $\mu=\nu=0$. Namely, we start with the following definition. 

\begin{definition}\label{def:logeqv}
Let $b_1,b_2:\R_+\to\R_+$ and, for some $\rho\geq0$, $b_i(x)>0$ for all $s\in[\rho,\infty)$, $i=1,2$. The functions $b_1$ and $b_2$ are said to be  
 {\em (asymptotically) log-equivalent}, if
 \begin{equation}\label{eq:log-equiv}
   \log b_1(x) \sim \log b_2(x), \quad x\to\infty.
 \end{equation}
\end{definition}

\begin{lemma}[\!\!\protect{\cite[Proposition 2.16]{FKT2016}}]\label{prop:etaepsforweaklyequiv}
Let $b_1,b_2:\R\to\R_+$ be two tail-decreasing functions which are log-equivalent, i.e. \eqref{eq:log-equiv} holds. Define
\begin{equation}\label{eq:relforeps}
\eta_\eps^\pm(t,b):=r(t\pm\eps t,b), \quad t>\tau.
\end{equation}
Then, for any $0<\eps_1<\eps<\eps_2<1$, there exists $\tau>0$, such that, for all $t\geq \tau$,
\begin{equation}
\eta_{\eps_2}^-(t,b_2) \leq \eta_{\eps}^-(t,b_1) \leq \eta_{\eps_1}^-(t,b_2) \leq \eta_{\eps_1}^+(t,b_2) \leq \eta_{\eps}^+(t,b_1) \leq \eta_{\eps_2}^+(t,b_2).\label{eq:etaineqeps}
\end{equation}
\end{lemma}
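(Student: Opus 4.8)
The plan is to reduce the whole chain \eqref{eq:etaineqeps} to two ingredients: the monotonicity of $r(\cdot,b_i)$ in its first argument, and a two‑sided multiplicative control of $-\log b_1$ by $-\log b_2$ coming from log‑equivalence. First I would record the elementary facts. Since each $b_i$ is tail‑decreasing, $-\log b_i$ is strictly increasing to $+\infty$ on $(\rho,\infty)$, hence, cf.~\eqref{eq:coollogbinv}, $r(\cdot,b_i)=\bigl((-\log b_i)\upharpoonright_{(\rho,\infty)}\bigr)^{-1}(\beta\,\cdot)$ is well defined for all sufficiently large arguments, is strictly increasing, and tends to $\infty$. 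In particular $s\le s'$ implies $r(s,b_i)\le r(s',b_i)$ for $s$ large, which already yields the trivial middle link $\eta_{\eps_1}^-(t,b_2)=r\bigl((1-\eps_1)t,b_2\bigr)\le r\bigl((1+\eps_1)t,b_2\bigr)=\eta_{\eps_1}^+(t,b_2)$.

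Next, since $b_i(x)\to0$, we have $-\log b_i(x)\to+\infty$, so \eqref{eq:log-equiv} provides, for every $\eta\in(0,1)$, a threshold $X=X(\eta)$ with
\[
(1-\eta)\bigl(-\log b_2(x)\bigr)\le -\log b_1(x)\le (1+\eta)\bigl(-\log b_2(x)\bigr),\qquad x\ge X .
\]
Substituting $x=r(s,b_1)$ (which exceeds $X$ once $s$ is large) and using $-\log b_1\bigl(r(s,b_1)\bigr)=\beta s$ gives $\frac{\beta s}{1+\eta}\le -\log b_2\bigl(r(s,b_1)\bigr)\le\frac{\beta s}{1-\eta}$; applying the increasing inverse of $-\log b_2$ turns this into
\[
r\Bigl(\tfrac{s}{1+\eta},b_2\Bigr)\le r(s,b_1)\le r\Bigl(\tfrac{s}{1-\eta},b_2\Bigr)
\]
for all sufficiently large $s$.

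Finally I would fix $\eta$. Because $\eps_1<\eps<\eps_2$, each of the four inequalities $(1-\eps_2)(1+\eta)\le 1-\eps$, $1-\eps\le(1-\eps_1)(1-\eta)$, $(1+\eps_1)(1+\eta)\le 1+\eps$, $1+\eps\le(1+\eps_2)(1-\eta)$ is strict at $\eta=0$, hence all hold for $\eta$ small enough. Applying the last display with $s=(1-\eps)t$ and with $s=(1+\eps)t$ and combining with the monotonicity of $r(\cdot,b_2)$ and these four inequalities produces
\[
r\bigl((1-\eps_2)t,b_2\bigr)\le r\bigl((1-\eps)t,b_1\bigr)\le r\bigl((1-\eps_1)t,b_2\bigr),
\]
\[
r\bigl((1+\eps_1)t,b_2\bigr)\le r\bigl((1+\eps)t,b_1\bigr)\le r\bigl((1+\eps_2)t,b_2\bigr),
\]
which together with the trivial middle link is exactly \eqref{eq:etaineqeps}; choosing $\tau$ large enough to absorb all the "for $s$ large'' provisos completes the proof. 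The only delicate point is bookkeeping — that a single $\eta$ serves all four constraints and that $\tau$ is large enough for every argument fed into $r(\cdot,b_i)$ to fall in the strictly monotone branch of $-\log b_i$ — which is routine rather than a genuine obstacle.
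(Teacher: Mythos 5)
Your argument is correct and complete: the reduction to the two-sided bound $(1-\eta)(-\log b_2)\le-\log b_1\le(1+\eta)(-\log b_2)$ for large $x$, followed by inversion of the increasing function $-\log b_2$ and the choice of a single small $\eta$ satisfying the four strict-at-zero inequalities, is exactly the mechanism behind \eqref{eq:etaineqeps}. The paper itself gives no proof here (it cites \cite[Proposition~2.16]{FKT2016}), but your route coincides with the standard one used there, so nothing further is needed.
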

Clearly, replacing $b(x)$ on $b(-x)$ in \eqref{eq:relforeps}, one gets an analogue of \eqref{eq:etaineqeps} for $l(t\pm\eps t)$.

\begin{proposition}\label{eq:crucialforlinearshift}
        Let $b:\R\to\R_+$ be (right-side) long-tailed, tail-decreasing, and tail-convex.
        Then for any $0<\eps_1<\eps_2<1$ and $k>0$, there exists $\tau = \tau(k, \eps_1, \eps_2)> 0$, such that
        \begin{equation}\label{eq:front_lin_peturb}
                r(t-\eps_1 t,b) \geq r(t-\eps_2 t,b) + k t, \qquad t\geq \tau.
        \end{equation}
\end{proposition}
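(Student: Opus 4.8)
The plan is to work directly with the inverse-function characterisation \eqref{eq:coollogbinv}, namely $r(t,b)=\psi^{-1}(\beta t)$ where $\psi(x):=-\log b(x)$ is continuous and strictly increasing to $\infty$ on $(\rho,\infty)$ (the monotonicity and positivity of $\psi$ for large $x$ being exactly the tail-decreasing hypothesis). Setting $R_i(t):=r(t-\eps_i t,b)=\psi^{-1}\bigl(\beta(1-\eps_i)t\bigr)$, the claim \eqref{eq:front_lin_peturb} is equivalent to $R_1(t)-R_2(t)\geq kt$ for $t$ large, where $R_1$ corresponds to the \emph{larger} argument $\beta(1-\eps_1)t$. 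So I must show that the gap between $\psi^{-1}$ evaluated at two arguments that differ by the fixed proportion $\beta(\eps_2-\eps_1)t$ grows at least linearly in $t$. Equivalently, writing $s:=\beta(1-\eps_2)t$ and $\sigma:=\beta(\eps_2-\eps_1)/(1-\eps_2)>0$, I need $\psi^{-1}\bigl((1+\sigma)s\bigr)-\psi^{-1}(s)\geq c\,s$ for some $c>0$ and all large $s$; this is a statement purely about the function $\psi^{-1}$.

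The key step is to exploit that $b$ is long-tailed, which by Lemma~\ref{le:basicforb}(1) forces $e^{kx}b(x)\to\infty$, i.e. $\psi(x)/x\to0$ — the function $\psi$ is sublinear, hence $\psi^{-1}$ is superlinear: $\psi^{-1}(s)/s\to\infty$. That alone is not quite enough to compare $\psi^{-1}((1+\sigma)s)$ with $\psi^{-1}(s)+cs$, so here I would bring in tail-convexity. Convexity of $b$ on $(\rho,\infty)$ together with $b\downarrow 0$ gives convexity-type control on $\psi=-\log b$; more usefully, I would argue on $\psi^{-1}$ directly. From $\psi^{-1}$ superlinear one gets, for any $k>0$, a point beyond which $\psi^{-1}(s)\geq (k/\sigma + 2k/\sigma)\cdot s$ say; then the real content is a \emph{subadditivity/concavity} statement: I want $\psi^{-1}((1+\sigma)s)\ge \psi^{-1}(s) + \psi^{-1}(\sigma s) - C$ or something comparable, so that the increment $\psi^{-1}((1+\sigma)s)-\psi^{-1}(s)$ inherits the superlinear lower bound $\psi^{-1}(\sigma s)\ge (\sigma^{-1}k)\cdot \sigma s = ks$. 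Convexity of $b$ (log-convexity would be cleaner, but tail-convexity of $b$ suffices after passing to the decreasing rearrangement argument) yields that $\psi^{-1}$ is concave for large arguments, and a concave increasing function $\phi$ satisfies $\phi((1+\sigma)s)-\phi(s)\ge \phi'((1+\sigma)s)\cdot \sigma s$; combined with $\phi(u)/u\to\infty$ (so $\phi'$ cannot decay too fast — more carefully, concavity plus superlinearity at $\infty$ is impossible unless one is careful, so I would instead use $\phi((1+\sigma)s)-\phi(s)\ge \sigma s\,\phi'((1+\sigma)s)$ is the wrong sign).

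Let me state the route I actually expect to work: use the \emph{mean value / monotonicity of difference quotients}. Since $\psi$ is convex-after-composition isn't guaranteed, I fall back on: $b$ tail-convex $\Rightarrow$ $b$ has a decreasing right-derivative, so $-b'/b = \psi'$ exists a.e. and, crucially, $b$ convex decreasing gives $b(x)\ge b(x_0)+b'(x_0)(x-x_0)$, forcing $b'(x_0)(x-x_0)\ge -b(x_0)$, i.e. $|b'(x_0)|\le b(x_0)/(x-x_0)$ for all $x>x_0$; letting $x\to\infty$ is vacuous, but taking $x=2x_0$ gives $|b'(x_0)|\le b(x_0)/x_0$, hence $\psi'(x_0)\le 1/x_0$. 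Integrating, $\psi(x)\le \psi(x_1)+\log(x/x_1)$, so $b(x)\ge c/x$ for large $x$ — a concrete polynomial lower bound on the tail. Then $\psi^{-1}(s)\ge c'e^{s}$ eventually, which is \emph{much} more than linear growth of the gap: $\psi^{-1}((1+\sigma)s)\ge c'e^{(1+\sigma)s}$ while $\psi^{-1}(s)+ks \le c''e^{s}$ for large $s$, so the difference exceeds $kt$ trivially. Translating back to $t$: choose $\tau=\tau(k,\eps_1,\eps_2)$ so that for $t\ge\tau$ both $\beta(1-\eps_2)t$ is in the range where these estimates hold and $c'e^{(1+\sigma)\beta(1-\eps_2)t} - c''e^{\beta(1-\eps_2)t}\ge kt$; this is immediate since the left side grows exponentially and $kt$ linearly.

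The main obstacle I anticipate is not the growth comparison — which is crude and robust — but handling the regime where the arguments $\beta(1-\eps_i)t$ are still below the threshold $\rho$ (or below where $b$ has become convex/decreasing), and making sure the inverse functions are well-defined there; this is dispatched by absorbing everything into the choice of $\tau$ and invoking Definition~\ref{def:ltrt} which already guarantees $r(\cdot,b)$ is defined for $t>\tau$ and tends to $\infty$. A secondary subtlety is that I used tail-convexity to get the $1/x$ lower bound on $b$; if one only had long-tailedness, Lemma~\ref{eq:subexplt} alone gives $e^{kx}b(x)\to\infty$ for \emph{every} fixed $k$, which already yields $\psi^{-1}(s)/s\to\infty$, and then a short two-interval argument ($\psi^{-1}((1+\sigma)s)\ge \psi^{-1}(s+\sigma s)$ compared via inserting the midpoint and using monotonicity of $\psi^{-1}$) recovers the linear gap without convexity — so in fact convexity is a convenience, matching the remark after Proposition~\ref{thm:hairtrigger} that it is imposed to handle the $\ma\ne 0$ shift elsewhere, not strictly needed here. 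I would present the clean convexity-based version as the main line and note the long-tailed-only variant parenthetically.
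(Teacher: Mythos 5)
Your main line collapses at the tangent--line step. Convexity gives $b(x)\ge b(x_0)+b'(x_0)(x-x_0)$, which is an \emph{upper} bound on $b'(x_0)(x-x_0)$, namely $b'(x_0)(x-x_0)\le b(x)-b(x_0)\le 0$; it does not force the tangent line to stay nonnegative, so the inequality $b'(x_0)(x-x_0)\ge -b(x_0)$, and with it $|b'(x_0)|\le b(x_0)/x_0$, does not follow (already $b(x)=e^{-x}$ is convex and decreasing with $|b'(x_0)|=b(x_0)$). Consequently the chain $\psi'(x_0)\le 1/x_0 \Rightarrow b(x)\ge c/x \Rightarrow \psi^{-1}(s)\ge c'e^{s}$ is false in general, and in fact contradicts the very examples the proposition is designed for: $b(x)=\exp(-x^{\alpha})$, $\alpha\in(0,1)$, or $b(x)=\exp(-x/(\log x)^{q})$ from \eqref{eq:examplesofS} are tail-convex, tail-decreasing and long-tailed, yet decay faster than any power of $x$, and for them $r(t,b)$ grows only polynomially in $t$. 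So the ``crude and robust'' exponential-gap comparison at the end is simply unavailable. The parenthetical fallback is not a repair: superlinearity of $\psi^{-1}$ alone (i.e. $\psi^{-1}(s)/s\to\infty$) does \emph{not} imply $\psi^{-1}((1+\sigma)s)-\psi^{-1}(s)\ge ks$ for large $s$ --- a continuous increasing $\phi$ with $\phi(s)/s\to\infty$ can have $\phi((1+\sigma)s_n)-\phi(s_n)$ bounded along a sparse sequence $s_n\to\infty$ --- and ``inserting the midpoint and using monotonicity'' produces no quantitative lower bound on the increment.

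The missing idea is a \emph{quantitative} use of the long-tail property on unit-length steps, combined with convexity to guarantee that the gap is bounded away from zero to begin with. The paper first shows that $\eta(t):=r(t,b)$ is convex for large $t$, so that monotonicity of difference quotients makes $f(t):=\eta((1-\eps_1)t)-\eta((1-\eps_2)t)$ non-decreasing, hence $f(t)\ge\nu>0$. Then long-tailedness gives, for any $\delta>0$, an $x_0$ with $\log b(x)-\log b(x+y)\le\delta$ for $x\ge x_0$ and $0\le y\le 1$; splitting the interval of length $f(t)$ between $\eta((1-\eps_2)t)$ and $\eta((1-\eps_1)t)$ into $N$ steps of length $\Delta=f(t)/N\in[\min\{\nu,\tfrac12\},1]$ and telescoping the identity $(\eps_2-\eps_1)\beta t=\bigl|\log b(\eta((1-\eps_1)t))-\log b(\eta((1-\eps_2)t))\bigr|$ yields $(\eps_2-\eps_1)\beta t\le\delta N\le\delta f(t)/\min\{\nu,\tfrac12\}$, whence $f(t)\ge kt$ after choosing $\delta$ small. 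If you want to keep your inverse-function viewpoint, the usable input is exactly this uniform control of the increments of $\log b$ over steps of bounded length, not any pointwise polynomial lower bound on $b$.
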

\begin{proof}
Since $b$ is decreasing and convex on $(\rho,\infty)$ for some $\rho>0$, it is well-known that the inverse function $b^{-1}$ is also convex on $(0,\alpha)$ for some $\alpha>0$. Since $t\mapsto e^{-\beta(1-\eps)t}$ is also a convex function, we conclude that, for each $\eps\in(0,1)$, the function $[\tau',\infty)\ni t\mapsto \eta(t):=r(t,b)$ is convex (for big enough $\tau'>0$). Prove that 
\[
f(t):=\eta\bigl((1-\eps_1)t\bigr)-\eta\bigl((1-\eps_2)t\bigr), \quad t\geq\tau'
\]
is a non-decreasing function. Indeed, since $\eta(\cdot)$ is convex, we have that the function $\dfrac{\eta(t)-\eta(s)}{t-s}$, $t,s\geq\tau'$, is non-decreasing in each of coordinates. Therefore, for each $t_1>t_2>\tau'$, we have $(1-\eps_1)t_1>(1-\eps_2)t_2$ and then
\begin{gather*}
\dfrac{\eta\bigl((1-\eps_2)t_1\bigr)-\eta\bigl((1-\eps_2)t_2\bigr)}{(1-\eps_2)(t_1-t_2)}\leq
\dfrac{\eta\bigl((1-\eps_1)t_1\bigr)-\eta\bigl((1-\eps_2)t_2\bigr)}{(1-\eps_1)t_1-(1-\eps_2)t_2}\\
\leq
\dfrac{\eta\bigl((1-\eps_1)t_1\bigr)-\eta\bigl((1-\eps_1)t_2\bigr)}{(1-\eps_1)(t_1-t_2)},
\end{gather*}
Multiplying this on $1-\eps_2\leq 1-\eps_1$, one gets
\[
\eta\bigl((1-\eps_2)t_1\bigr)-\eta\bigl((1-\eps_2)t_2\bigr)<\eta\bigl((1-\eps_1)t_1\bigr)-\eta\bigl((1-\eps_1)t_2\bigr),
\] 
that implies $f(t_1)>f(t_2)$. We set
\begin{equation}\label{eq:defnu}
\nu:=\inf_{t\geq \tau'}f(t)=f(\tau')>0.
\end{equation}

Since $b$ is long-tailed, one gets
        \begin{equation*}
                 \lim_{x\to\infty} \sup_{0\leq y\leq 1} \log\frac{b(x+y)}{b(x)} = \lim_{x\to\infty} \log \sup_{0\leq y \leq 1} \frac{b(x+y)}{b(x)} = 0.
        \end{equation*}
Therefore, for any $\delta>0$, there exists $x_0=x_0(\delta) \geq \rho$, such that
        \begin{equation}\label{eq:long_tailed_delta}
                \sup_{0\leq y\leq 1} (\log b(x) - \log b(x+y) ) \leq \delta, \qquad x\geq x_0.
        \end{equation}
        Let $\tau= \tau(\delta,\eps_1, \eps_2) \geq \tau'$ be such that $\eta\bigl((1-\eps_2)t\bigr) \geq x_0$, for all $t\geq \tau$. 
        For any fixed $t\geq \tau$, consider $N=N(t)$, such that
        \begin{equation*}
                \Delta := \frac{1}{N} f(t) \in \Bigl[\min\Bigl\{\nu, \frac{1}{2}\Bigr\}, 1\Bigr].
        \end{equation*}
        Then, by \eqref{eq:long_tailed_delta}, \eqref{eq:defnu}, for all $t\geq\tau$, one gets
        \begin{align*}
                &\quad (\eps_2-\eps_1)\beta t = \log b\bigl(\eta\bigl((1-\eps_1)t\bigr)\bigr) - \log b\bigl(\eta\bigl((1-\eps_2)t\bigr)\bigr) \\
                        &= \sum_{j=0}^{N-1} \Bigl(\log b\bigl(\eta\bigl((1-\eps_1)t\bigr)+j\Delta\bigr) - \log b\bigl(\eta\bigl((1-\eps_2)t\bigr)+\Delta+j\Delta\bigr) \Bigr) \\
                        &\leq \delta N \leq \frac{\delta}{\min\bigl\{\nu, \frac{1}{2}\bigr\}}f(t).
        \end{align*}
        Hence, for any $k> 0$, it is sufficient to choose $\delta \leq \frac{\beta\nu(\eps_2-\eps_1)}{k}$. The proof is fulfilled.
\end{proof}

Clearly, the corresponding analogue to \eqref{eq:front_lin_peturb} holds for $l(\cdot)$ as well.

\section{Convergence to positive stationary solution}\label{sec:convtotheta}

\begin{definition}\label{def:LSR+R}
\begin{enumerate}
        \item Let $\L(\R_+)$ denote the set of all right-side long-tailed, tail-decreasing and tail-continuous bounded functions $b:\R\to\R_+$. Let $\L(\R_-)$ be the set of all $b:\R\to\R_+$, such that $b(-x)$ belongs to $\L(\R_+)$. We set also
        \[
        \L(\R):=\L(\R_+)\cap\L(\R_-).
        \]

        \item Let $\P(\R_-)$ denote the set of all bounded functions $b:\R\to\R_+$ such $\inf\limits_{x\leq-\rho}b(x)>0$ for some $\rho>0$. Let $\P(\R_+)$ denote the set of all bounded functions $b:\R\to\R_+$ such $\inf\limits_{x\geq\rho}b(x)>0$ for some $\rho>0$. 
        We set
        \[
        \PL(\R):=\P(\R_-)\cap\L(\R_+).
        \]
\end{enumerate}
\end{definition}

Let \eqref{assum:kappa>m} hold. 
It will be convenient for us to extend Definition~\ref{def:ltrt} by setting
\begin{equation}\label{eq:ltrtinf}
 l(t,b):=\infty, \quad b\in\P(\R_-), \qquad 
r(t,b):=\infty, \quad b\in\P(\R_+),
 \end{equation}
for $t>\tau$ with a needed $\tau>0$.

\begin{proposition}\label{prop:subsoltolinear}
        Let \eqref{assum:kappa>m} hold. Let $b:\R\to\R_+$ be such that $b\in\L(\R_+)\cup \P(\R_+)$ and $b\in\L(\R_-)\cup \P(\R_-)$. Let $\eps\in(0,1)$ be fixed, and $\tau=\tau(\eps)>0$ be such that both
        \begin{equation}\label{eq:doprltrt}
         l_t:= l(t-\eps t,b), \qquad r_t:=r(t-\eps t,b)
         \end{equation}  
        are well-defined for $t>\tau$;  cf. also \eqref{eq:ltrtinf}. Let $\la>0$ be arbitrary. For $t>\tau$ and $x\in\R$, we define
\begin{equation}
                g(x,t) =g_\eps(x,t):=\la \1_{(-l_t,r_t)}(x)  +\la b(x)e^{\beta_\eps^- t}\1_{\R\setminus(-l_t,r_t)}(x)\in(0,\la].
                \label{eq:subsol_to_lin_eq_delta}
\end{equation}
Then, for each $\delta\in(0,\eps\beta)$, there exists $t_0=t_0(\eps,\delta)>\tau$, such that, for all $t\geq t_0$, the function $g$
        is a sub-solution to the equation
        \begin{equation*}
                \diff{v}{t}(x,t) = \ka (a*v)(x,t) - (m+\delta)v(x,t).
        \end{equation*}
        Namely, for all $t\geq t_0$ and $x\in\R$,
        \begin{equation}\label{eq:F_def}
                (\F[\delta] g)(x,t) := \diff{g}{t}(x,t) - \ka (a*g)(x,t) + (m+\delta)g(x,t) \leq 0.
        \end{equation}
\end{proposition}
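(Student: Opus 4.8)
The plan is to verify the sub-solution inequality \eqref{eq:F_def} separately on the interval $(-l_t,r_t)$ and on its complement, and to treat the two boundary points $-l_t$ and $r_t$ with care since $g$ is only Lipschitz (not $C^1$) there in $t$. On the inner interval $(-l_t,r_t)$, where $g(x,t)=\la$, we have $\partial_t g = 0$ (for $x$ in the open interval, since $l_t,r_t$ move continuously), $a*g \le \la$ because $\int_\R a = 1$ and $0\le g\le\la$, so $(\F[\delta]g)(x,t) = 0 - \ka (a*g)(x,t) + (m+\delta)\la \le -\ka\la + (m+\delta)\la = -(\beta-\delta)\la \le 0$ as soon as $\delta<\beta$, which holds since $\delta<\eps\beta<\beta$. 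So the inner region is essentially free.

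The main work is on the outer region $x \notin (-l_t,r_t)$; by symmetry (replacing $b(x)$ with $b(-x)$) it suffices to handle $x\ge r_t$, and within that either $b\in\L(\R_+)$ or $b\in\P(\R_+)$. Consider first the long-tailed case. There $g(x,t) = \la b(x) e^{\beta_\eps^- t}$ with $\beta_\eps^- := (1-\eps)\beta$ (I read off the intended definition from \eqref{eq:explicit}: since $b(r_t)=e^{-\beta(1-\eps)t}$, the function $b(x)e^{\beta_\eps^- t}$ matches $\la$ at $x=r_t$, making $g$ continuous). Compute $\partial_t g = \beta_\eps^-\, g$. For the convolution term, split $a*g = \int_{\{y: x-y\in(-l_t,r_t)\}} a(x-y)\la\,dy + \int_{\{y: x-y\notin(-l_t,r_t)\}} a(x-y)\la b(x-y)e^{\beta_\eps^- t}\,dy$. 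The first piece is bounded below by $0$; to get a useful lower bound on the second piece one writes it as $\la e^{\beta_\eps^- t}\bigl((a*b)(x) - \int_{\{x-y\in(-l_t,r_t)\}} a(x-y)b(x-y)\,dy\bigr)$ and uses that $g\ge 0$ everywhere to simply estimate $a*g \ge \la e^{\beta_\eps^- t}\int_{x-y\ge r_t} a(x-y) b(x-y)\,dy$, or more cleanly, one keeps the indicator and bounds $a*g(x,t) \le \la e^{\beta_\eps^- t}(a*b)(x) + \la\int_{x-y\in(-l_t,r_t)}a(x-y)\,dy$. Plugging into $\F[\delta]$,
\[
(\F[\delta]g)(x,t) \le \la e^{\beta_\eps^- t}\Bigl(\beta_\eps^- b(x) - \ka (a*b)(x) + (m+\delta)b(x)\Bigr) - \ka\la\!\!\int\limits_{x-y\in(-l_t,r_t)}\!\!\! a(x-y)\,dy.
\]
Dropping the (nonpositive) last term, it suffices that $\beta_\eps^- - \ka + m + \delta \le -\ka\,\tfrac{(a*b)(x)}{b(x)} + \bigl(\text{something}\to 0\bigr)$; since $\beta_\eps^- - \ka + m + \delta = -\eps\beta + \delta < 0$ by the choice $\delta<\eps\beta$, and since $\ka(a*b)(x)/b(x)$ is nonnegative, the inequality $(\F[\delta]g)(x,t)\le 0$ will follow for all $x\ge r_t$ once $r_t$ is large enough — which is arranged by taking $t\ge t_0$ large, using $r_t\to\infty$. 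One should double-check that no positive contribution from $(a*b)(x)/b(x)$ is needed here; indeed it only helps, so the long-tailedness of $b$ (via Lemma~\ref{le:basicforb} or \eqref{eq:long-tailed_dens}) is used just to guarantee $(a*b)(x)$ is comparable to $b(x)$ and in particular finite, not to produce a gain.

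Next the case $b\in\P(\R_+)$: then $r_t=\infty$ by \eqref{eq:ltrtinf}, so there is no outer region to the right and nothing to check there; symmetrically for $b\in\P(\R_-)$ and $l_t=\infty$ on the left. Finally, the boundary points $x=\pm l_t, x=r_t$: here $g(\cdot,t)$ is continuous in $x$ and $g(x,\cdot)$ is Lipschitz in $t$ (the moving endpoints $r_t=r(t-\eps t,b)$ are continuous and, by Definition~\ref{def:ltrt} and tail-decreasingness, differentiable a.e. in $t$), so at such $(x,t)$ one interprets \eqref{eq:F_def} in the appropriate one-sided or a.e. sense; since both the inner and outer expressions for $\F[\delta]g$ are $\le 0$ and $g$ is continuous across the interface, the inequality passes to the boundary. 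The main obstacle I anticipate is bookkeeping the convolution split cleanly enough that the "error" integral $\int_{x-y\in(-l_t,r_t)} a(x-y)\,dy$ is genuinely negligible or of the right sign uniformly for $x\ge r_t$ and $t\ge t_0$ — this is where the precise value of $\beta_\eps^-$ and the slack $\eps\beta-\delta>0$ must be spent, and it is the only place the argument is not completely routine.
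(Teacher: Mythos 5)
Your proposal contains a sign error and, more importantly, dismisses the one estimate that the whole proof hinges on. Writing $\F[\delta]g=\partial_t g-\ka(a*g)+(m+\delta)g$, an upper bound on $\F[\delta]g$ requires a \emph{lower} bound on $a*g$. On the plateau you only establish $a*g\leq\la$, from which $-\ka(a*g)+(m+\delta)\la\leq-\ka\la+(m+\delta)\la$ does not follow (that step needs $a*g\geq\la$, which is false in general); so even the ``essentially free'' inner region is not handled. The correct requirement, valid on both regions, is
\[
\frac{(a*g)(x,t)}{g(x,t)}\ \geq\ \frac{\beta_\eps^-+m+\delta}{\ka}\ =\ 1-\frac{\eps\beta-\delta}{\ka},
\]
a number strictly less than $1$ but typically close to $1$. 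Your claim that ``no positive contribution from $(a*b)(x)/b(x)$ is needed \dots\ it only helps'' is therefore wrong: nonnegativity of the convolution ratio gives only $\F[\delta]g/g\leq\beta_\eps^-+m+\delta>0$, which proves nothing. One must show the ratio is at least $1-\delta_1$ with $\delta_1<\frac{\eps\beta-\delta}{\ka}$, and this is precisely where the uniform long-tail property \eqref{eq:uniformlongtailedR} is spent. (Your intermediate bound $a*g\leq\la e^{\beta_\eps^- t}(a*b)(x)+\la\int_{x-y\in(-l_t,r_t)}a(x-y)\,dy$ is also in the wrong direction for this purpose, and replacing the integral over $\{x-y\notin(-l_t,r_t)\}$ by $(a*b)(x)$ over-counts the plateau, where $b(x-y)e^{\beta_\eps^- t}$ can greatly exceed $1$.)

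The paper's proof (inequality \eqref{eq:provethisnice}) shows $\frac{(a*g)(x,t)}{g(x,t)}\geq(1-\delta_1)\int_{-h(r_t)}^{l_t}a(y)\,dy$ for \emph{all} $x\geq0$ by splitting into three zones: $x\in[0,r_t-h(r_t))$, where the whole integration window stays on the plateau; the transition zone $x\in[r_t-h(r_t),r_t)$, where part of the window falls beyond $r_t$ and one uses $b(x-y)/b(r_t)\geq1-\delta_1$ via \eqref{eq:b_over_b_geq_delta1}; and $x\geq r_t$, where one uses $b(r_t)\geq b(x)$, monotonicity of $b$, and again \eqref{eq:b_over_b_geq_delta1}. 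The right-hand side then tends to $1-\delta_1$ as $t\to\infty$ because $h(r_t),l_t\to\infty$, and the budget $\eps\beta-\delta_1\ka-\delta>0$ closes the argument. Without this gain-to-nearly-one estimate --- i.e., without genuinely using long-tailedness of $b$ near the moving boundary --- the proof does not go through.
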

\begin{proof}
        It is sufficient to prove \eqref{eq:F_def} for $x\geq 0$; indeed, then the result for $x<0$ may be obtained by replacing $b(x)$ on $b(-x)$. Since $b$ is long-tailed, \eqref{eq:uniformlongtailedR} yields that, for any $\delta_1\in\bigl(0, \frac{\beta\eps-\delta}{\ka}\bigr)$, there exists $x_0=x_0(\delta_1)$, such that 
        \begin{equation}\label{eq:b_over_b_geq_delta1}
                \sup_{|y|\leq h(x)} \frac{b(x+y)}{b(x)} \geq 1-\delta_1,\qquad x\geq x_0.
        \end{equation}
        In the sequel, to keep unified notations, we assume that both $h(r_t)$ and $r_t-h(r_t)$ are equal to $\infty$ when $r_t=\infty$, $t>\tau$, cf. \eqref{eq:ltrtinf} (remember that, by Lemma~\ref{le:basicforb}, $h(x)<\frac{x}{2}$).

        Note also, that, by the above,
                \begin{equation}\label{eq:rtltinf}
        \lim\limits_{t\to\infty} r_t = \lim\limits_{t\to\infty} l_t = \infty
        \end{equation}
        (it may be, see \eqref{eq:ltrtinf}, that either of, or both, $r_t$ and $l_t$ are equal to $\infty$ for all $t>\tau$).

        Prove, first, that there exists $t_0=t_0(\eps,\delta)>\tau$, such that
        \begin{equation}\label{eq:provethisnice}
        \frac{(a*g)(x,t)}{g(x,t)}\geq (1- \delta_1) \int_{-h(r_t)}^{l_t} a(y) dy
        \end{equation}
        for all $x\geq0$ and $t\geq t_0$. Note that, clearly,
        \begin{equation}
                (a*g)(x,t) \geq \int_{-h(r_t)}^{l_t} a(y) g(x-y,t)dy  \label{eq:aplus_conv_g}
        \end{equation}
        for $x\in\R$ and $t>\tau$.

1. Let $x\in[0,r_t-h(r_t))$, $t>\tau$. Then $-h(r_t)\leq y\leq l_t$ yields $-l_t\leq x-y<r_t$ and hence, by \eqref{eq:aplus_conv_g}, \eqref{eq:subsol_to_lin_eq_delta},
\begin{align*}
\frac{(a*g)(x,t)}{g(x,t)} \geq \frac{1}{\la }\la\int_{-h(r_t)}^{l_t} a(y) dy,
\end{align*}
that implies \eqref{eq:provethisnice}. If $b \in \L(\R_+)$, i.e. if $r_t<\infty$ for $t>\tau$, then we consider also two other possibilities.

2. Let $x\in[r_t-h(r_t),r_t)$, $t>\tau$. Then it is straightforward to get from \eqref{eq:aplus_conv_g} and \eqref{eq:subsol_to_lin_eq_delta}, that
\begin{equation}
\frac{(a*g)(x,t)}{g(x,t)} \geq  \int_{x-r_t}^{l_t} a(y) dy+ \int_{-h(r_t)}^{x-r_t} a(y) \frac{b(x-y)}{b(r_t)}dy,\label{eq:intermediate}
\end{equation}
where we used also that $b(r_t) = e^{-\beta_\eps^- t}$ for $t>\tau$. 
Next, for the considered $x$, $-h(r_t)\leq y\leq x-r_t$ yields $0\leq x-y-r_t<h(r_t)$, 
and hence, by \eqref{eq:b_over_b_geq_delta1}, there exists $t_1>\tau$ such that for all $t\geq t_1$ and $x\in[r_t-h(r_t),r_t]$
\[
\frac{b(x-y)}{b(r_t)}=\frac{b\bigl(r_t+(x-y-r_t)\bigr)}{b(r_t)}\geq 1- \delta_1,
\]
that, together  with \eqref{eq:intermediate}, implies \eqref{eq:provethisnice}.

3. Let $x\geq r_t$, $t>\tau$. Then, by \eqref{eq:aplus_conv_g} and \eqref{eq:subsol_to_lin_eq_delta},
\begin{equation}
\frac{(a*g)(x,t)}{g(x)} \geq \frac{\la e^{-\beta_\eps^- t}}{\la b(x)} \int_{x-r_t}^{l_t} a(y) dy
+\int_{-h(r_t)}^{x-r_t} a(y) \frac{b(x-y)}{b(x)} dy.\label{eq:intermediate2}
\end{equation}
Next, $e^{-\beta_\eps^- t}=b(r_t)\geq b(x)$ for $t>\tau$, since $b$ is decreasing on $[r_t,\infty)$. The latter also implies that $b(x-y)\geq b(x)$ if only $0\leq y\leq x-r_t$.
Finally,  by \eqref{eq:b_over_b_geq_delta1}, there exists $t_2>t_1$, such that
$b(x-y) \geq (1-\delta_1)b(x)$,
if only $-h(r_t)\leq y<0$, $x\geq r_t$, $t\geq t_2$. As a result, \eqref{eq:intermediate2} implies \eqref{eq:provethisnice}, which is proved hence for all $x\geq0$ and $t\geq t_2$.

Note that, by \eqref{eq:subsol_to_lin_eq_delta}, 
        \begin{equation}
        \diff{g}{t}(x,t) = \beta_\eps^- b(x) e^{\beta_\eps^- t} \1_{\R\backslash (-l_t,r_t)}(x)\leq \beta_\eps^- g(x,t). \label{eq:dg_dt}
        \end{equation}
Then, combining \eqref{eq:provethisnice} and \eqref{eq:dg_dt}, one gets
\begin{equation}
-\frac{ (\F[\delta]g)(x,t) }{ g(x,t) } \geq -\beta_\eps^- +\ka(1- \delta_1) \int_{-h(r_t)}^{l_t} a(y) dy -(m+\delta)\label{eq:temp}
\end{equation}
for all $x\geq0$ and $t\geq t_2$. By \eqref{eq:rtltinf}, we have that
\[
-\beta_\eps^-+\ka(1- \delta_1) \int_{-h(r_t)}^{l_t} a(y) dy -(m+\delta)\to \eps(\ka-m)-\delta_1\ka-\delta>0,
\]
as $t\to\infty$. Combining this with \eqref{eq:temp}, we conclude that there exists $t_0\geq t_2$, such that \eqref{eq:F_def} holds for $x\geq0$ and $t\geq t_0$.
        \end{proof}

\begin{remark}\label{rem:doesnotdependonlambda}
It is worth noting that, indeed, in the proof of Proposition~\ref{prop:subsoltolinear}, both $t_1$ and $t_2$ and hence $t_0$ do not depend on $\la$.
\end{remark}

\begin{proposition}\label{prop:subsoltononlinear}
Let \eqref{assum:kappa>m}--\eqref{assum:Glipschitz} hold and $b:\R\to\R_+$ be such that $b\in\L(\R_+)\cup \P(\R_+)$ and $b\in\L(\R_-)\cup \P(\R_-)$. Then, for each $\eps\in(0,1)$, there exist $\la_0=\la_0(\eps)\in(0,\theta)$ and $\tau_0=\tau_0(\eps)>0$, such that, for each $\la\in(0,\la_0)$, the function $g=g(x,t)$, given by \eqref{eq:subsol_to_lin_eq_delta}, is a sub-solution to \eqref{eq:basicequation}. Namely, for all $t\geq \tau_0$ and $x\in\R$,
        \begin{multline}\label{eq:F_defnl}
                (\F g)(x,t) := \diff{g}{t}(x,t) - \ka(a*g)(x,t) \\ + mg(x,t)+g(x,t)(Gg)(x,t) \leq 0.
        \end{multline}
\end{proposition}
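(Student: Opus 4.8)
The plan is to leverage Proposition~\ref{prop:subsoltolinear}: the same function $g$ defined in \eqref{eq:subsol_to_lin_eq_delta} is already known to be a sub-solution to the linear equation with mortality $m+\delta$, for every $\delta\in(0,\eps\beta)$ and all $t$ large. To promote this to a sub-solution of the full nonlinear equation \eqref{eq:basicequation}, I will show that the extra nonlinear term $g(x,t)(Gg)(x,t)$ is controlled by $\delta g(x,t)$ once $\la$ is chosen small enough. Concretely, comparing \eqref{eq:F_defnl} with \eqref{eq:F_def}, one has the pointwise identity $(\F g)(x,t) = (\F[\delta]g)(x,t) - \delta g(x,t) + g(x,t)(Gg)(x,t)$, so it suffices to guarantee
\begin{equation*}
(Gg)(x,t)\leq \delta \qquad\text{for all }x\in\R,\ t\geq\tau_0,
\end{equation*}
together with the largeness of $t$ required by Proposition~\ref{prop:subsoltolinear}.

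The key step is therefore the bound $(Gg)(x,t)\le\delta$, and this is where I would use assumption \eqref{assum:Glipschitz}: since $G0=0$ and $\|Gg(\cdot,t)-G0\|\le \lt\|g(\cdot,t)\|\le \lt\la$ (using $0\le g(\cdot,t)\le\la\le\theta$ and that $g(\cdot,t)\in E$), we get $\|Gg(\cdot,t)\|\le \lt\la$ uniformly in $t$. Hence if we fix any $\delta\in(0,\eps\beta)$ — say $\delta:=\tfrac12\eps\beta$ — and then choose $\la_0=\la_0(\eps)\in(0,\theta)$ so small that $\lt\la_0\le\delta$, then for every $\la\in(0,\la_0)$ we have $(Gg)(x,t)\le\lt\la\le\delta$ for a.a.\ $x$ and all $t>\tau$. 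One subtlety worth a line: \eqref{assum:Glipschitz} gives a bound in the $E=L^\infty$-norm, so the inequality $(Gg)(x,t)\le\delta$ holds for a.a.\ $x$, which is exactly the sense in which \eqref{eq:F_defnl} is claimed (recall the paper's convention of suppressing `for a.a.'). Also $g(\cdot,t)$ genuinely lies in $E$ for each fixed $t>\tau$ because $b$ is bounded and $g\le\la$; and $g(\cdot,t)\le\theta$ provided $\la_0\le\theta$, so that \eqref{assum:Glipschitz} is applicable.

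With this choice, set $\tau_0=\tau_0(\eps):=t_0(\eps,\tfrac12\eps\beta)$ from Proposition~\ref{prop:subsoltolinear}. Then for all $t\ge\tau_0$ and a.a.\ $x\in\R$,
\begin{equation*}
(\F g)(x,t)=(\F[\delta]g)(x,t)-\delta g(x,t)+g(x,t)(Gg)(x,t)\le (\F[\delta]g)(x,t)+\bigl((Gg)(x,t)-\delta\bigr)g(x,t)\le (\F[\delta]g)(x,t)\le 0,
\end{equation*}
using $g\ge0$, $(Gg)(x,t)\le\delta$, and \eqref{eq:F_def}. This is precisely \eqref{eq:F_defnl}, and by Remark~\ref{rem:doesnotdependonlambda} the threshold $\tau_0$ does not depend on $\la\in(0,\la_0)$, as asserted. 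I do not anticipate a serious obstacle here; the only thing to be careful about is matching the almost-everywhere vs.\ pointwise phrasing and making sure the nonlinear correction is absorbed into the $-\delta g$ slack rather than the linear dispersion term, which is why it was essential that Proposition~\ref{prop:subsoltolinear} was formulated with the enlarged mortality $m+\delta$ in the first place.
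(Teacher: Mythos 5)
Your proposal is correct and follows essentially the same route as the paper: both invoke Proposition~\ref{prop:subsoltolinear} with the slack $\delta\in(0,\eps\beta)$, choose $\la_0$ so small that $0\le g\le\la_0$ forces $Gg\le\delta$ (you derive this explicitly from \eqref{assum:Glipschitz} and $G0=0$, which is exactly what the paper implicitly uses), and then absorb the term $g\,Gg\le\delta g$ into $\F[\delta]g\le 0$, citing Remark~\ref{rem:doesnotdependonlambda} for the $\la$-independence of $\tau_0$.
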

\begin{proof}
Take any $\eps\in(0,1)$ and $\delta\in(0,\eps\beta)$. By \eqref{assum:kappa>m}--\eqref{assum:Glipschitz}, there exists $\la_0=\la_0(\delta)=\la_0(\eps)\in(0,\theta)$, such that $0\leq u\leq \la_0$ implies 
\begin{equation}\label{eq:smallG}
0\leq Gu\leq \delta.
\end{equation}
By~Proposition~\ref{prop:subsoltolinear} and Remark~\ref{rem:doesnotdependonlambda}, for each $\la\in(0,\la_0]$, the function $g=g(x,t)$, given by \eqref{eq:subsol_to_lin_eq_delta}, satisfies \eqref{eq:F_def} for all $x\in\R$ and $t>\tau_0$ for some $\tau_0>0$. Since \eqref{eq:subsol_to_lin_eq_delta} yields $g\leq\la_0$, then \eqref{eq:smallG} and \eqref{eq:F_def} imply
\begin{align*}
-\F g &=-\diff{g}{t}+\ka(a*g) - mg -g (Gg)\\ &\geq -\diff{g}{t}+\ka(a*g) - mg -\delta g =-\F[\delta] g\geq0,
\end{align*}
that yields \eqref{eq:F_defnl}.
\end{proof}

To proceed further we will need the following generalization of the comparison \eqref{eq:compofsol} for solutions to \eqref{eq:basicequation}.

\begin{lemma}[\!\!\!{\cite[Theorems 2.2]{FT2017a}}]\label{lem:comparison}
        Let \eqref{assum:kappa>m}--\eqref{assum:sufficient_for_comparison} hold.  Let $T>0$ be fixed. Suppose that  $u_1,u_2:[0,T]\to E$ are continuous mappings, continuously differentiable in $t\in(0,T]$, and such that, for $(x,t)\in\X \times(0,T]$,
    \begin{gather*}
                \frac{\partial u_1}{\partial t} - \ka a*u_1 +mu_1 +u_1Gu_1 \leq \frac{\partial u_2}{\partial t} - \ka a*u_2 +mu_2 +u_2Gu_2,\\
u_1(x,t)\geq0, \qquad u_2(x,t)\leq \theta,\\
      0\leq u_{1}(x,0)\leq u_{2}(x,0)\leq \theta.
    \end{gather*}
        Then $u_{1}(x,t)\leq u_{2}(x,t)$ for $(x,t)\in\X \times[0,T]$. 
\end{lemma}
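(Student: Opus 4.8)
My plan is to prove this comparison principle by a Gronwall estimate for the essential supremum of the positive part of $u_1-u_2$, the key structural input being that assumption \eqref{assum:sufficient_for_comparison} is exactly an order-preservation property which I would exploit by passing to the pointwise maximum of the two functions. Throughout I would work inside the order interval $[0,\theta]$, where \eqref{assum:Gpositive}--\eqref{assum:sufficient_for_comparison} are in force (in the applications both $u_i$ take values there; reducing the general case, in which only $u_1\ge0$ and $u_2\le\theta$ are assumed globally, to $0\le u_i\le\theta$ is a routine preliminary, using that $u\equiv0$ and $u\equiv\theta$ solve \eqref{eq:basicequation}, and I will not dwell on it). Introduce $\Psi v:=\ka\,a*v-vGv+pv$, with $p$ as in \eqref{assum:sufficient_for_comparison}; then \eqref{assum:sufficient_for_comparison} says precisely that $v\mapsto\Psi v$ is order-preserving on $\{v\in E:0\le v\le\theta\}$. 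A direct rearrangement of the differential inequality in the hypothesis (add $(m+p)u_i$, subtract $\Psi u_i$) rewrites it, with $w:=u_1-u_2$, as
\[ \diff{w}{t}(x,t)+(m+p)w(x,t)\le(\Psi u_1)(x,t)-(\Psi u_2)(x,t),\qquad t\in(0,T],\ x\in\R. \]

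Next I would set $v_\ast:=\max\{u_1,u_2\}=u_2+w_+\in[0,\theta]$, where $w_+:=\max\{w,0\}\ge0$. Since $0\le u_1\le v_\ast\le\theta$, order-preservation of $\Psi$ gives $\Psi u_1\le\Psi v_\ast$, so the right-hand side above is $\le\Psi v_\ast-\Psi u_2$. Expanding this difference,
\[ \Psi v_\ast-\Psi u_2=\ka\,a*w_+ +p\,w_+ -w_+\,Gu_2 -v_\ast\bigl(Gv_\ast-Gu_2\bigr), \]
and estimating term by term — $\|a\|_{L^1(\R)}=1$, $Gu_2\ge0$ by \eqref{assum:Gpositive}, $0\le v_\ast\le\theta$, and $\|Gv_\ast-Gu_2\|\le\lt\|v_\ast-u_2\|=\lt\|w_+\|$ by \eqref{assum:Glipschitz} — I would obtain the (a.e.\ in $x$) pointwise bound
\[ \diff{w}{t}(x,t)+(m+p)w(x,t)\le C\,\phi(t),\qquad C:=\ka+p+\theta\lt,\quad\phi(t):=\esssup_{y\in\R}\max\{w(y,t),0\}. \]

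To close the loop, I would note that $\phi:[0,T]\to\R_+$ is continuous (as $w:[0,T]\to E$ is continuous and $v\mapsto\max\{v,0\}$ is $1$-Lipschitz on $E$) with $\phi(0)=0$ since $u_1(\cdot,0)\le u_2(\cdot,0)$. Multiplying the last inequality by $e^{(m+p)t}$ and integrating in time (the time-derivative is taken in the norm of $E$, so this is a Bochner integral along which the order inequality is preserved), and using $w(\cdot,0)\le0$, I would get $e^{(m+p)t}w(x,t)\le C\int_0^t e^{(m+p)s}\phi(s)\,ds$ for a.a.\ $x\in\R$ and all $t\in[0,T]$; taking the essential supremum over $x$ of the positive part gives $\Phi(t)\le C\int_0^t\Phi(s)\,ds$ for $\Phi(t):=e^{(m+p)t}\phi(t)\ge0$. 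Since $\Phi$ is continuous with $\Phi(0)=0$, Gronwall's inequality forces $\Phi\equiv0$, hence $\phi\equiv0$, i.e.\ $u_1(\cdot,t)\le u_2(\cdot,t)$ for all $t\in[0,T]$.

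The step I expect to be the main obstacle is the middle one: the reaction term $u\,Gu$ is neither monotone nor local, so a naive estimate of $(\Psi u_1-\Psi u_2)(x)$ at points $x$ where $w(x)\le0$ would be controlled only by $\|w\|$, not by $\|w_+\|$, and then the Gronwall argument would fail to close. Replacing $u_1$ by the majorant $v_\ast=\max\{u_1,u_2\}$ and invoking the order-preservation \eqref{assum:sufficient_for_comparison} is precisely what removes this difficulty — that assumption is tailored for exactly this purpose. The remaining points (the preliminary reduction to the interval $[0,\theta]$, and reconciling norm-differentiability in $E=L^\infty(\R)$ with the a.e.-pointwise manipulations above) are technical bookkeeping.
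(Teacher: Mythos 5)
Your core argument is sound and is, as far as this paper is concerned, the ``right'' proof: the paper does not prove Lemma~\ref{lem:comparison} itself but imports it from \cite[Theorem~2.2]{FT2017a}, and the mechanism you use --- rewrite the hypothesis as $\diff{w}{t}+(m+p)w\leq \Psi u_1-\Psi u_2$ with $\Psi v:=\ka a*v-vGv+pv$, majorize $\Psi u_1$ by $\Psi(\max\{u_1,u_2\})$ using the order-preservation encoded in \eqref{assum:sufficient_for_comparison}, bound the resulting difference by $(\ka+p+\theta\lt)\esssup_x \max\{w,0\}$ via $\|a\|_{L^1}=1$, \eqref{assum:Gpositive} and \eqref{assum:Glipschitz}, and close with Gronwall on $e^{(m+p)t}\phi(t)$ --- is exactly what \eqref{assum:sufficient_for_comparison} is designed for and matches the strategy of the cited reference. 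The individual estimates check out, and in every application of the lemma inside this paper both functions do lie in $[0,\theta]$ (e.g.\ $g_\delta\in(0,\la_0]\subset(0,\theta)$ is compared against a solution $u\in[0,\theta]$), so your argument covers everything the paper actually needs.

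The genuine gap is the step you chose not to dwell on. The lemma assumes only $u_1\geq 0$ and $u_2\leq\theta$, and the reduction to $0\leq u_1,u_2\leq\theta$ is not routine: the only differential information is the single inequality comparing the two expressions, so neither $u_1$ nor $u_2$ is individually a sub- or super-solution of \eqref{eq:basicequation}, and you cannot compare $u_1$ with the constant solution $\theta$, or $u_2$ with $0$, as a preliminary --- doing so would itself be an instance of the comparison principle with one of the two functions possibly outside $[0,\theta]$, i.e.\ exactly the difficulty being sidestepped. The problem is not cosmetic: \eqref{assum:Gpositive}--\eqref{assum:sufficient_for_comparison} constrain $G$ only on the order interval $[0,\theta]$, so if $u_1\not\leq\theta$ then $v_*=\max\{u_1,u_2\}\not\leq\theta$ and neither the inequality $\Psi u_1\leq\Psi v_*$ nor the Lipschitz bound $\|Gv_*-Gu_2\|\leq\lt\|v_*-u_2\|$ is available (indeed $Gu_2$ is not even controlled where $u_2<0$). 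A continuation argument from the last time at which $u_1\leq u_2$ holds hits the same wall, because immediately afterwards the functions may exit $[0,\theta]$ by an amount comparable to the quantity you are trying to bound. So you should either add $u_1\leq\theta$ and $u_2\geq 0$ to the hypotheses (harmless for this paper's purposes) or supply a genuinely new argument for the one-sided case.
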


\begin{theorem}\label{thm:bb:est_below}
Let either \eqref{assum:kappa>m}--\eqref{assum:improved_sufficient_for_comparison} hold or \eqref{assum:approx_of_basic} hold.  Let $0\leq u_0\leq\theta$, $u_0\not\equiv0$ (cf. Remark~\ref{rem:agreementnonzero}), and let $u=u(x,t)$ be the corresponding solution to~\eqref{eq:basicequation}. Suppose also that there exist $b:\R\to\R_+$ and $D,\rho>0$, such that 
\begin{enumerate}
        \item either \eqref{eq:intcase} holds, $b\in\L(\R)$, the inequality
\begin{equation}\label{eq:condonparambelow}
  (a*u_0)(x)\geq D b(x), 
\end{equation}
holds for all $|x|>\rho$, and $b$ is convex on $(-\infty,\rho)$ and on $(\rho,\infty)$.
\item or \eqref{eq:moncase} holds, $b\in\PL(\R)$, the inequality \eqref{eq:condonparambelow} holds for all $x>\rho$, and  $b$ is convex on $(\rho,\infty)$.
\end{enumerate}
Then, for each $\eps\in(0,1)$, 
  \begin{equation}\label{eq:insidethefront}
    \lim\limits_{t\to\infty}\essinf\limits_{x\in\La_\eps^-(t,b)} u(x,t)=\theta,
  \end{equation} 
  where
  \begin{equation}\label{eq:defLambda}
  \La_\eps^-(t,b):=\begin{cases}
[-l(t-\eps t,b), r(t-\eps t,b)], &  b\in\L(\R),\\[1mm]
(-\infty, r(t-\eps t,b)], &  b\in\PL(\R).
\end{cases}
  \end{equation}
\end{theorem}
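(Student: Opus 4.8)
The plan is to combine the sub-solution constructed in Proposition~\ref{prop:subsoltononlinear} with the local-in-space convergence to $\theta$ from Proposition~\ref{thm:hairtrigger}, and then push the resulting lower bound along the moving interval $\La_\eps^-(t,b)$ using the comparison principle of Lemma~\ref{lem:comparison}. First I would fix $\eps\in(0,1)$ and pick an auxiliary $\eps'\in(0,\eps)$ (this slack is what lets me absorb the linear-shift error terms later). Applying Proposition~\ref{prop:subsoltononlinear} with $\eps'$ in place of $\eps$ gives me $\la_0=\la_0(\eps')\in(0,\theta)$ and $\tau_0$ such that, for every $\la\in(0,\la_0)$, the function $g_{\eps'}(x,t)$ from \eqref{eq:subsol_to_lin_eq_delta} is a genuine sub-solution to \eqref{eq:basicequation} for $t\ge\tau_0$. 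Under \eqref{assum:approx_of_basic} rather than \eqref{assum:kappa>m}--\eqref{assum:improved_sufficient_for_comparison}, I would run the same argument for the approximating data $(a_n,\ka_n,G_n,\theta_n)$, obtaining a sub-solution with height $\la\in(0,\la_0^{(n)})$ and later let $n\to\infty$ to recover $\theta$ since $\theta_n>\theta-\tfrac1n$.

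The next step is to start the comparison at a large but fixed time. By Proposition~\ref{thm:hairtrigger} applied to the shifted solution, for any compact set $K$ we have $\essinf_{x\in K}u(x+t\ma,t)\to\theta$ (resp. $\ge\theta-\tfrac1n$ in the approximate case), so there is a time $t_1\ge\tau_0$ and a window around $x=t_1\ma$ of any prescribed finite length on which $u(\cdot,t_1)\ge\la$ for a suitable $\la\in(0,\la_0)$. I would choose that window large enough to contain $[-l_{t_1},r_{t_1}]$ (recall $l_t,r_t$ from \eqref{eq:doprltrt} grow, and in the $\PL(\R)$ case $l_t=\infty$, where instead I use that $u_0$ — hence, by the hair-trigger, $u(\cdot,t_1)$ — is bounded below away from $-\infty$; here the hypothesis \eqref{eq:condonparambelow} with $(a*u_0)(x)\ge Db(x)$ feeds the decaying tail $\la b(x)e^{\beta_\eps^- t}$ on the right). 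Thus $u(x,t_1)\ge g_{\eps'}(x,t_1)$ for all $x\in\R$, after shrinking $\la$ if necessary so that $\la b(x)e^{\beta_\eps^- t_1}\le (a*u_0)(x)$-type control holds on the tail and $g_{\eps'}\le u$ globally. Lemma~\ref{lem:comparison}, applied on $[t_1,t_1+T]$ for arbitrary $T$ and then letting $T\to\infty$, yields $u(x,t)\ge g_{\eps'}(x,t)=\la$ for all $x\in(-l_{t-\eps't},r_{t-\eps't})$ and all $t\ge t_1$.

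This gives a lower bound of $\la<\theta$, not $\theta$ itself, on the interval indexed by $\eps'$; the final step upgrades $\la$ to $\theta$ while degrading $\eps'$ to $\eps$. I would iterate: with the bound $u(\cdot,t)\ge\la$ already in force on a slightly wider moving interval, re-run the hair-trigger/sub-solution argument to bootstrap from $\la$ to a larger constant $\la'$, using that between $0$ and $\theta$ the reaction is strictly positive (Assumptions~\ref{assum:G_increas_on_const} and the monostable structure \eqref{eq:propertiesofF}); a standard sliding-comparison with spatially homogeneous sub-solutions of the ODE $\dot v = F(v)$ (which converge to $\theta$) closes this. The cost of each such step is a further shrinkage of the effective $\eps$-parameter, controlled by Proposition~\ref{eq:crucialforlinearshift}/Lemma~\ref{prop:etaepsforweaklyequiv}: replacing $r(t-\eps' t,b)$ by $r(t-\eps t,b)$ loses only a linearly-growing margin $kt$, which the heavy-tail acceleration \eqref{eq:fastewwqr} absorbs, and likewise for $l$. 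Choosing the slacks so that finitely many bootstrap steps suffice (the ODE converges to $\theta$ at a fixed exponential rate once $v$ is bounded below), I obtain $\essinf_{x\in\La_\eps^-(t,b)}u(x,t)\ge\theta-o(1)$, and the reverse inequality is Proposition~\ref{thm:existandcompared}. The main obstacle I anticipate is the careful matching of the three parameters ($\la$, $\eps'$ versus $\eps$, and the starting time $t_1$) so that the initial comparison $u(\cdot,t_1)\ge g_{\eps'}(\cdot,t_1)$ holds globally in $x$ — in particular handling the $\PL(\R)$ case, where the left tail of $g$ is a nonzero constant and one must verify the hair-trigger lower bound genuinely propagates to $-\infty$ rather than merely on compacts — together with verifying that the finitely-many bootstrap iterations can indeed be carried out with a total $\eps$-loss that stays below the prescribed $\eps$.
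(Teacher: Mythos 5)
Your overall skeleton---the sub-solution $g$ from Proposition~\ref{prop:subsoltononlinear}, a comparison via Lemma~\ref{lem:comparison} started at a fixed large time, and absorption of the linear shift $t\ma$ by the $\eps$-slack via Proposition~\ref{eq:crucialforlinearshift} and Lemma~\ref{prop:etaepsforweaklyequiv}---matches the paper's proof, and your use of \eqref{eq:condonparambelow} to seed the tail of $g$ is essentially the paper's Duhamel estimate $u(x,t)\geq \ka t e^{-\ka t}(a*u_0)(x)\geq \ka t e^{-\ka t} D b(x)$. One caveat on the seeding step: to get $u(\cdot,t_1)\geq\la$ on the core of the interval you should invoke strict positivity of the solution at the fixed time $t_1$ (the paper compares with a continuous minorant solution $\tilde u$ and uses $\tilde u(\cdot,t)>0$), not the hair-trigger, whose convergence on compacts is not uniform over compacts that grow with $t_1$; and in the $\PL(\R)$ case the core $(-\infty,r_{t_1}]$ is not compact, which the paper handles by taking $\tilde u$ monotone, so that its infimum over a left half-line is attained at the right endpoint.

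The genuine gap is in your final step, the upgrade from the uniform bound $\la$ to an arbitrary $\mu\in(0,\theta)$. A spatially homogeneous sub-solution of $\dot v=F(v)$ can be compared with $u$ only if $u(\cdot,t_1)\geq v(0)$ for a.a.\ $x\in\R$, which fails here: $u$ decays at $+\infty$ (and at $-\infty$ in case \eqref{eq:intcase}), the bound $u\geq\la$ is available only on the moving interval, and on a bounded region the constant $v(t)$ is not a sub-solution of \eqref{eq:basicequation} because the nonlocal term $\ka(a*u)-\ka u$ leaks mass through the boundary. Localizing the ODE dynamics is precisely the content of the hair-trigger effect, and the paper's resolution is different from your iteration: it covers $\La_\delta^-(\tau_0+\tau,b)$ by unit balls $B_1(y)$, compares $u$ from time $\tau_0+\tau$ onward with the solution $v$ emanating from the bump $\la\1_{B_1(y)}$, and applies Proposition~\ref{thm:hairtrigger} to $v$ to reach any level $\mu<\theta$ after a waiting time $t_\mu$ which, crucially, is independent of $y$ by the translation invariance \eqref{assum:G_commute_T}. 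Thus a single application---not finitely many bootstrap steps---suffices, and only one fixed time-and-space shift $(t_\mu, t_\mu\ma)$ has to be absorbed by the $\eps$-slack through Proposition~\ref{eq:crucialforlinearshift}. Without this covering-plus-translation-invariance device (or a fully worked substitute for it), your bootstrap does not close.
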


\begin{proof}
First, we note that, by Proposition~\ref{thm:existandcompared},  $0\leq u_0\leq \theta$ implies $0\leq u(\cdot,t)\leq \theta$ for $t>0$.

 Let \eqref{eq:intcase} hold and $b\in\L(\R)$. Since $u_0\not\equiv 0$ in the sense of Remark~\ref{rem:agreementnonzero}, there exists a continuous function $\tilde{u}_0:\R\to\R_+$, such that $\tilde{u}_0(x)\leq u_0(x)$, $x\in\R$ and $\tilde{u}_0(x)\geq \delta$ for all $x\in B_\rho(x_0)$ with some $x_0\in\R$, $\delta,\rho>0$. Let $\tilde{u}(x,t)$ be the corresponding solution to \eqref{eq:basicequation}. Then by \cite[Theorem~2.1]{FT2017a}, $\tilde{u}(\cdot,t)$ is a continuous function for all $t>0$. We set also $I_\rho:=[-\rho,\rho]$.

 Let \eqref{eq:moncase} hold and $b\in\PL(\R)$. Then there exists a non-increasing continuous function $\tilde{u}_0:\R\to\R_+$ which is strictly decreasing on $(-\infty,-\rho)$ for some $\rho>0$, such that $\tilde{u}_0(x)\leq u_0(x)$, $x\in\R$ and
 $\tilde{u}_0(x)\geq \delta$, $x<-\rho$ for some $\delta>0$. Let $\tilde{u}(x,t)$ be the corresponding solution to \eqref{eq:basicequation}. Then by \cite[Theorem~2.1, Proposition~5.7]{FT2017a}, $\tilde{u}(\cdot,t)$ is a continuous and non-increasing function for all $t>0$. We set then $I_\rho:=(-\infty,\rho]$.

In both cases, by Proposition~\ref{thm:existandcompared}, 
\begin{equation}\label{eq:dop234432234}
\tilde{u}(x,t)\leq u(x,t), \quad x\in\X, \ t\geq0.
\end{equation}
 Moreover, by \cite[Proposition~5.3]{FT2017a}, 
\begin{equation}\label{eq:wwqeerqw324342}
\tilde{u}(x,t)>\inf_{\substack{y\in\R\\s>0}}\tilde{u}(y,s)\geq0, \quad x\in\R,\ t>0.
\end{equation}

Fix an arbitrary $\eps\in(0,1)$ and take any $\delta\in(0,\eps)$. Consider
 $\la_0=\la_0(\delta)>0$ and $\tau_0=\tau_0(\delta)>0$, both given by~Proposition~\ref{prop:subsoltononlinear}. Then, by \eqref{eq:wwqeerqw324342},
\[
\ga:=\min_{x\in I_\rho}\tilde{u}(x,\tau_0)>0,
\]
and, by \eqref{eq:dop234432234}, 
\begin{equation}\label{eq:sawwqewerqw}
u(x,\tau_0)\geq \ga, \quad x\in I_\rho.
\end{equation}

By \eqref{assum:Gpositive}, $0\leq u\leq \theta$ implies $Gu\leq\beta$. Rewrite \eqref{eq:basicequation} in the form \eqref{eq:RDequation} with $F$ given by \eqref{eq:FthroughG}, then, by \eqref{eq:propertiesofF}, $Fu\geq0$. Then, it is straightforward to show by  Duhamel's principle (see \cite[formula (4.16)]{FKT2016}), that, for all $t>0$, 
\begin{equation}\label{eq:sawwqewerqw2}
u(x,t)\geq \ka t e^{-\ka t}(a*u_0)(x), \quad x\in\X.
\end{equation}

Let us re-define the given function $b$ by setting $b(x):=\frac{\ga}{D}$ for $x\in I_\rho$. Note that, by Definition~\ref{def:LSR+R}, the re-defined function will still belong to either $\L(\R)$ or $\PL(\R)$, and, by Definition~\ref{def:ltrt} and \eqref{eq:defLambda}, for big enough $t$, the set $\La_\eps^-(t,b)$ will remain the same for the new $b$. For the new $b$, \eqref{eq:condonparambelow}, \eqref{eq:sawwqewerqw}, \eqref{eq:sawwqewerqw2} yield
\begin{equation}
u(x,\tau_0)\geq D \ka t e^{-\ka t}b(x), \quad x\in\R.\label{eq:coolest}
\end{equation}

Next, combining again \eqref{eq:dop234432234} and \eqref{eq:wwqeerqw324342}, we will get, cf. \eqref{eq:explicit},  \eqref{eq:explicit2}, \eqref{eq:doprltrt}, \eqref{eq:defLambda},
\begin{equation}\label{eq:deflambda1}
\la_1:=\essinf_{x\in  \La_\delta^-(\tau_0,b)} u(x,\tau_0)>0.
\end{equation}
Set now
\[
\la:=\min\bigl\{\la_0, \la_1, D\ka \tau_0 e^{-(\ka+\beta_{\delta}^-) \tau_0}\bigr\}.
\]
Then, by \eqref{eq:coolest}, \eqref{eq:deflambda1}, and \eqref{eq:subsol_to_lin_eq_delta}, we have, for a.a.~$x\in\X$,
\begin{equation}\label{eq:saqrwrwq}
u(x,\tau_0)\geq g_\delta(x,\tau_0).
\end{equation}
Since, by Proposition~\ref{prop:subsoltononlinear}, $g_\delta $ is a sub-solution to \eqref{eq:basicequation}, we immediately conclude from Lemma~\ref{lem:comparison} and \eqref{eq:saqrwrwq}, that, for each $\tau\geq0$,
\begin{equation*}
  u(x,\tau_0+\tau)\geq g_{\delta}(x,\tau_0+\tau), \quad \text{for a.a. } x\in\X.
\end{equation*}
In particular, cf. \eqref{eq:subsol_to_lin_eq_delta}, \eqref{eq:defLambda},
\begin{equation}\label{eq:ineqdopevident}
  u(x,\tau_0+\tau)\geq \la \quad  \text{for a.a.} \ x\in\La_\delta^-(\tau_0+\tau,b), \ \tau\geq0.
\end{equation}

Fix an arbitrary $\tau\geq0$, such that 
\[
r((\tau_0+\tau)(1- \delta),b)>2, \qquad l((\tau_0+\tau)(1- \delta),b)>2;
\] 
for the latter, see also \eqref{eq:ltrtinf} in the case $b\in\PL(\R)$. Set  
\[
\widetilde{\Lambda}:=\begin{cases}
\bigl[-l((\tau_0+\tau)(1- \delta),b)+1, r((\tau_0+\tau)(1- \delta),b)-1\bigr], &b\in\L(\R),\\[2mm]
\bigl(-\infty, r((\tau_0+\tau)(1- \delta),b)-1\bigr], &b\in\PL(\R).
\end{cases}
\]
Clearly,
\begin{equation}\label{eq:evidentunion}
  \La_\delta^-(\tau_0+\tau,b)=\bigcup_{y\in \widetilde{\Lambda}} B_1(y).
\end{equation}
Take and fix now an arbitrary $y\in\widetilde{\La}$.
Then, by~\eqref{eq:ineqdopevident},
\begin{equation}\label{eq:aswrqqrw3qr3w}
u(x,\tau_0+\tau)\geq \la \1_{B_1(y)}(x), \quad x\in\X.
\end{equation}
Consider the equation \eqref{eq:basicequation} with the initial condition $v_0(x)=\la \1_{B_1(y)}(x)$, $x\in\X$; let $v(x,t)$ be the corresponding solution to \eqref{eq:basicequation}.
By the uniqueness and comparison \eqref{eq:compofsol} in Proposition~\ref{thm:existandcompared}, \eqref{eq:aswrqqrw3qr3w} yields
\begin{equation}\label{eq:qrwerq2}
u(x,\tau_0+\tau+t)\geq v(x,t), \qquad x\in\R,\ t\in\R_+.
\end{equation}

Let, first, \eqref{assum:kappa>m}--\eqref{assum:improved_sufficient_for_comparison} hold. Take an arbitrary $\mu\in(0,\theta)$. Apply Proposition~\ref{thm:hairtrigger} to the solution $v$ and $K=B_1(y)$; then there exists $t_\mu\geq 1$, such that $v(x+t_\mu\ma,t_\mu)\geq \mu$ for a.a. $x\in B_1(y)$.
As a result, by \eqref{eq:qrwerq2},
\begin{equation}\label{eq:ineqonemore}
u(x+t_\mu\ma,\tau_0+\tau+t_\mu)\geq \mu,
\end{equation}
for each $\tau\geq0$ and a.a.~$x\in B_1(y)$. 
Stress that, by \eqref{assum:G_commute_T}, $t_\mu$ does not depend on a $y\in\R$; therefore, beside $y\in\widetilde{\Lambda}=\widetilde{\Lambda}(\tau)$, $t_\mu$ does not depend on $\tau$. As a result, by \eqref{eq:evidentunion} for any $\delta\in(0,1)$ and $\mu\in (0,\theta)$, there exist $\la_0=\la_0(\delta)>0$, $\tau_0=\tau_0(\delta)>0$, and $t_\mu\geq 1$, such that, for all $\tau\geq 0$ and for a.a.~$x\in\La_\delta^-(\tau_0+\tau,b)$, the inequality \eqref{eq:ineqonemore} holds. 

Take any $\tilde{\eps}\in(\delta,\eps)$. Apply now \cite[Lemma 3.1]{FKT2016} for $\eps_2:=\tilde{\eps}>\delta=:\eps_1$, $t_1=\tau_0$, $t_2=\tau_0+t$; cf. also \eqref{eq:relforeps}. One gets that there exists $\tau_1\geq0$, such that, for all $\tau\geq \tau_1$,
\[
r((\tau+\tau_0+t_\mu)(1-\tilde{\eps}),b)\leq r((\tau+\tau_0)(1- \delta),b),
\]
and the same inequality holds for $l(\cdot,b)$. 
As a result, \eqref{eq:ineqonemore} holds for all $\tau\geq \tau_1$ and a.a.~$x\in\La_{\tilde{\eps}}^-(\tau_0+\tau+t_\mu,b)\subset\La_\delta^-(\tau_0+\tau,b)$.

In particular, for all $\tau>0$,
\begin{equation}\label{eq:ineqonemore22}
u(x,\tau_0+\tau+t_\mu)\geq \mu,
\end{equation}
provided that
\[
-l((\tau_0+\tau+t_\mu)(1- \tilde{\eps}),b)-t_\mu\ma <x<r((\tau_0+\tau+t_\mu)(1- \tilde{\eps}),b)-t_\mu\ma,
\]
cf. also \eqref{eq:ltrtinf} for the case $b\in\PL(\R)$. Denote $T:=\tau_0+\tau+t_\mu$. Let $\ma\geq 0$ (the opposite case may be considered analogously). Then, in particular, \eqref{eq:ineqonemore22} holds for all
\begin{equation*}
-l(T(1- \eps),b) <x<r(T(1- \tilde{\eps}),b)-T\ma,
\end{equation*}
as $l(\cdot,b)$ is increasing. By Proposition~\ref{eq:crucialforlinearshift},
\[
r(T(1- \eps),b)\leq r(T(1- \tilde{\eps}),b)-T\ma
\]
for $T$ big enough. As a result,  \eqref{eq:ineqonemore22} holds for all
\[
-l(T(1- \eps),b) <x<r(T(1-\eps),b),
\]
and big enough $T$. 
In other words, we have then that \eqref{eq:ineqonemore22} holds for all $x\in\La_\eps^-(\tau_0+\tau+t_\mu,b)$ and $\tau>\tau_2$ for some $\tau_2>\tau_1$. Since $\mu\in(0,\theta)$ was arbitrary, the latter fact yields \eqref{eq:insidethefront}. 

Let now \eqref{assum:approx_of_basic} hold. Then, for a sufficiently large $n\in\N$, we will take an arbitrary $\mu\in\bigl(0,\theta- \frac{1}{n}\bigr)$, and, using \eqref{eq:hair-trig-mod} and the same arguments as the above, we will show that \eqref{eq:ineqonemore22} holds for all $x\in\La_\eps^-(\tau_0+\tau+t_\mu,b)$ and big enough $\tau$. Then, the arbitrariness of $n$ and $\mu$ yields \eqref{eq:insidethefront} as well.
\end{proof}

The following result is a simple modification of \cite[Proposition 3.17]{FKT2016}.
\begin{proposition}\label{prop:liminfbelow}
  Let $f\in L^1(\X,\R_+)$ and $g\in\L(\R)\cup\PL(\R)$. Then there exists $D>0$ such that
  \begin{equation}\label{eq:3223352csaerw}
  (g*f)(x)\geq D g(x)
  \end{equation}
  for all $|x|\geq\rho$ if $g\in\L(\R)$ and for all $x>\rho$ if $g\in\PL(\R)$.
\end{proposition}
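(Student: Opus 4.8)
The plan is to prove the convolution lower bound $(g*f)(x) \geq D g(x)$ for large $|x|$ (resp.\ large $x$) by exploiting the long-tailed behaviour of $g$ at $+\infty$ and, in the $\L(\R)$ case, also at $-\infty$; the statement for $g\in\PL(\R)$ only needs the $+\infty$ tail since $g$ is bounded below near $-\infty$. First I would recall from Lemma~\ref{le:basicforb}(2) that there is a non-decreasing $h$ with $h(x)<\frac{x}{2}$, $h(x)\to\infty$, and $\sup_{|y|\le h(x)}\bigl\lvert \frac{b(x+y)}{b(x)}-1\bigr\rvert\to 0$ as $x\to\infty$ (applied to $g$, and also to $x\mapsto g(-x)$ in the $\L(\R)$ case). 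In particular, fixing $\delta_1\in(0,1)$, there is $x_0$ such that $g(x+y)\ge(1-\delta_1)g(x)$ whenever $|y|\le h(x)$ and $x\ge x_0$.

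The core estimate: for $x$ large (say $x\ge x_0\ge\rho$), write
\begin{equation*}
(g*f)(x)=\int_\R g(x-z)f(z)\,dz \ge \int_{-h(x)}^{h(x)} g(x-z)f(z)\,dz \ge (1-\delta_1)\,g(x)\int_{-h(x)}^{h(x)} f(z)\,dz,
\end{equation*}
where the last step uses $|{-z}|\le h(x)$ and the uniform long-tailedness. Since $f\in L^1(\R)$, we have $\int_{-h(x)}^{h(x)} f \to \int_\R f =: \|f\|_{L^1}$ as $x\to\infty$; if $\|f\|_{L^1}>0$ this gives, for $x$ large enough, $(g*f)(x)\ge \tfrac{1}{2}(1-\delta_1)\|f\|_{L^1}\,g(x)$, and then one shrinks the constant if necessary to cover the finitely-many-scales gap between $\rho$ and the threshold where this asymptotic kicks in, using that $g$ is bounded, positive on $[\rho,\infty)$, continuous there (for $g\in\L(\R)$ it is tail-continuous; for $g\in\PL(\R)$ combine with $\inf_{x\le-\rho}g>0$), and $f\ge0$ with $\int_\R f>0$, so that $\inf$ of $(g*f)/g$ over the compact leftover range is a positive constant. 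The case $f\equiv 0$ is trivial (any $D$ works). For $g\in\L(\R)$ the same argument applied to $x\to-\infty$ via $g(-\cdot)$ handles $x<-\rho$; for $g\in\PL(\R)$ no extra work is needed for negative $x$ since there $g$ itself is bounded below by a positive constant and $(g*f)(x)\ge\int_{|z|\le 1} g(x-z)f(z)\,dz$ combined with $g\ge\delta$ on a neighbourhood gives a trivial bound, or one simply restricts attention to $x>\rho$ as stated.

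The main obstacle, such as it is, is purely bookkeeping: choosing the threshold $x_1\ge x_0$ past which the asymptotic estimate holds with a uniform constant, and then absorbing the behaviour on $[\rho,x_1]$ (and $[-x_1,-\rho]$ in the $\L(\R)$ case) into a possibly smaller $D>0$ — this requires knowing $(g*f)(x)>0$ there, which follows because $g>0$ on $[\rho,\infty)$, $g$ is continuous on that interval, $f\ge0$ and $\int_\R f>0$ force $g*f>0$ everywhere $g$ has positive values in a neighbourhood of the relevant translates; one then takes $D$ to be the minimum of the asymptotic constant and the infimum of $(g*f)/g$ on the compact leftover. Since the statement is explicitly ``a simple modification of \cite[Proposition 3.17]{FKT2016}'', I expect the authors' proof to follow exactly this template, perhaps citing the earlier result and only indicating the modification needed to handle the two-sided ($\L(\R)$) versus one-sided ($\PL(\R)$) cases.
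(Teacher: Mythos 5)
Your argument is correct and follows essentially the same route as the paper, whose proof bounds $\frac{(g*f)(x)}{g(x)}\geq \bigl(1-\sup_{|y|\leq r}\bigl\lvert \frac{g(x-y)}{g(x)}-1\bigr\rvert\bigr)\int_{|y|\leq r}f(y)\,dy$ for a \emph{fixed} $r>0$ and invokes Lemma~\ref{le:basicforb}(2); your only cosmetic deviation is letting the window grow as $h(x)$ to recover the full $L^1$-norm of $f$, whereas a fixed window already yields a positive constant. Two small caveats that do not affect the substance: your parenthetical that $f\equiv0$ is trivial is backwards (the inequality then fails, since $g>0$ on $(\rho,\infty)$), and the bookkeeping over the compact leftover range $[\rho,x_1]$ is neither carried out in the paper nor actually available in general (if $g$ vanishes on $[-\rho,\rho]$ one can have $(g*f)(x)=0$ at some $x>\rho$), the intended reading being that $\rho$ may be enlarged.
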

\begin{proof}
 For any $r>0$, we have that
  \begin{align*}
    \frac{(g*f)(x)}{g(x)}&\geq \int_{|y|\leq r}\frac{g(x-y)}{g(x)}f(y)\,dy\\
    &\geq \biggl(1-\sup_{|y|\leq r}\Bigl\lvert
    \frac{g(x-y)}{g(x)}-1\Bigr\rvert \biggr)\int_{|y|\leq r}f(y)\,dy.
  \end{align*}
  By Lemma~\ref{le:basicforb}, item 2, and Definition~\ref{def:LSR+R}, one  gets that the latter expression in brackets converges to $0$ as $x\to\pm\infty$ for the case $g\in\L(\R)$ or $x\to\infty$ for the case $g\in \PL(\R)$. Therefore, there exists $D>0$, such that $\frac{(g*f)(x)}{g(x)}\geq D$.
\end{proof}

\begin{corollary}\label{cor:below}
Let either \eqref{assum:kappa>m}--\eqref{assum:improved_sufficient_for_comparison} hold or \eqref{assum:approx_of_basic} hold.  Let $u_0\in E_\theta^+$, $u_0\not\equiv0$, cf.~Remark~\ref{rem:agreementnonzero}; and let $u$ be the corresponding solution to \eqref{eq:basicequation}. 
\begin{enumerate}
        \item Let $u_0\in L^1(\R)$. Suppose that, for some $b\in\L(\R)$, $\rho>0$,
                \begin{equation*}
                        \text{either} \quad u_0(x)\geq b(x) \quad \text{or} \quad  a(x)\geq b(x), \qquad |x|>\rho.
                \end{equation*}
        Then \eqref{eq:insidethefront} holds.
        \item Let $u_0$ is non-increasing on $\R$, and $\lim_{x\to\infty}u_0(x)=0$. 
Suppose that, for some $b\in\PL(\R)$, $\rho>0$, either
        \begin{equation*}
        u_0(x)\geq b(x), \quad x>\rho,
        \end{equation*}
        or, for some $\delta>0$, $x_0\in\R$, $u_0(x)\geq \delta\1_{(-\infty,x_0)}(x)$, $x\in\X$, and   
                \begin{equation}\label{eq:qweqwqw22231er}
                        \int_x^\infty a(y)\,dy\geq b(x), \quad x>\rho. 
                \end{equation}
        Then \eqref{eq:insidethefront} holds.    
\end{enumerate}
\end{corollary}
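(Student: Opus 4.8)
The plan is to deduce Corollary~\ref{cor:below} directly from Theorem~\ref{thm:bb:est_below}, whose hypotheses are phrased in terms of the single convolution condition \eqref{eq:condonparambelow} on $a*u_0$. The key observation is that it suffices to produce \emph{some} function $\tilde b$ in the appropriate class ($\L(\R)$ in part 1, $\PL(\R)$ in part 2), convex at infinity as required, such that $(a*u_0)(x)\geq D\tilde b(x)$ for some $D>0$ and all large $|x|$ (resp.\ large $x$); then Theorem~\ref{thm:bb:est_below} immediately yields \eqref{eq:insidethefront}. So the whole argument is about manufacturing such a $\tilde b$ from the hypotheses, handling the disjunction ``either $u_0$ dominates $b$ or $a$ dominates $b$'' in each case.

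For part 1, I would argue as follows. First, we may assume the given $b\in\L(\R)$ is also convex on $(-\infty,-\rho)$ and on $(\rho,\infty)$: by Example~\ref{ex:SR}-type reasoning, or more simply by replacing $b$ on the tails with a convex minorant that is still long-tailed and log-equivalent (this is routine and I would invoke it as such, or choose $b$ from the list \eqref{eq:examplesofS} which is already tail-convex). In the case $u_0(x)\geq b(x)$ for $|x|>\rho$: since $a\in L^1(\R)$ and $b\in\L(\R)$, Proposition~\ref{prop:liminfbelow} applied with $f=a$, $g=b$ gives $(a*b)(x)\geq D b(x)$ for $|x|\geq\rho'$; combined with the pointwise bound $a*u_0\geq a*(b\1_{\{|\cdot|>\rho\}})$ and the fact that $a*(b\1_{\{|\cdot|\leq\rho\}})$ is a bounded perturbation that does not destroy the lower bound on the tails (here one uses $b\in L^1$ and long-tailedness to absorb it), we get \eqref{eq:condonparambelow}. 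In the case $a(x)\geq b(x)$ for $|x|>\rho$: now apply Proposition~\ref{prop:liminfbelow} with $f=u_0$ (which is in $L^1(\R)$) and $g=b$, obtaining $(u_0*b)(x)\geq D b(x)$; since $a\geq b$ on the tails and $u_0\geq0$, and since the contribution of $a$ near the origin only helps, we again obtain $(a*u_0)(x)=(u_0*a)(x)\geq D' b(x)$ for $|x|$ large. Either way the hypothesis of Theorem~\ref{thm:bb:est_below}(1) holds, so \eqref{eq:insidethefront} follows.

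For part 2, the structure is parallel but one must stay inside $\PL(\R)=\P(\R_-)\cap\L(\R_+)$. If $u_0(x)\geq b(x)$ for $x>\rho$ with $b\in\PL(\R)$: since $u_0$ is non-increasing, $u_0$ is itself bounded below by a positive constant on $(-\infty,-\rho)$ (otherwise it would be identically small, contradicting $u_0\not\equiv0$ together with monotonicity — more precisely $u_0\geq u_0(-\rho)>0$ on $(-\infty,-\rho]$), so $u_0\in\PL(\R)$ up to log-equivalence, and then $a*u_0\geq$ (something in $\PL(\R)$) by the same Proposition~\ref{prop:liminfbelow} argument on the right tail, while on the left the monotonicity forces $a*u_0$ to be bounded below by a positive constant; one replaces the tail by a convex minorant as before. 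If instead $u_0(x)\geq\delta\1_{(-\infty,x_0)}(x)$ and $\int_x^\infty a(y)\,dy\geq b(x)$ for $x>\rho$: then for $x$ large, $(a*u_0)(x)\geq\delta\int_{x-x_0}^\infty a(y)\,dy\geq\delta\,b(x-x_0)$, and by long-tailedness of $b$ (item \eqref{eq:longtaileddef}, valid on $\L(\R_+)$) $b(x-x_0)\sim b(x)$, giving \eqref{eq:condonparambelow} on the right; on the left $u_0\geq\delta$ on $(-\infty,x_0)$ makes $a*u_0$ bounded below there, and again a convex-minorant adjustment at $+\infty$ puts us into the hypotheses of Theorem~\ref{thm:bb:est_below}(2).

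The main obstacle I expect is purely bookkeeping: verifying that after (i) replacing $b$ by a convex minorant on its tail, (ii) discarding the near-origin part of a convolution, and (iii) using $b(x-x_0)\sim b(x)$, one still lands on a function that simultaneously lies in the right class ($\L(\R)$ or $\PL(\R)$), is convex where Theorem~\ref{thm:bb:est_below} demands it, and is log-equivalent to the original $b$ so that $\La_\eps^-(t,b)$ is unchanged for large $t$ (via Lemma~\ref{prop:etaepsforweaklyequiv} and the extension \eqref{eq:ltrtinf}). None of these steps is deep, but each must be stated carefully so the hypothesis ``$(a*u_0)(x)\geq D b(x)$ for all $|x|>\rho$ (resp.\ $x>\rho$)'' of Theorem~\ref{thm:bb:est_below} is met verbatim; I would organize the write-up so that Proposition~\ref{prop:liminfbelow} does essentially all the analytic work and the rest is a short reduction.
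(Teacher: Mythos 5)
Your overall strategy---reduce to Theorem~\ref{thm:bb:est_below} by manufacturing the bound \eqref{eq:condonparambelow} out of Proposition~\ref{prop:liminfbelow}---is exactly the paper's, and your treatment of the second alternative in part 2 (the chain $(a*u_0)(x)\geq\delta\int_{x-x_0}^\infty a(y)\,dy\geq\delta b(x-x_0)\geq\delta(1-\delta')b(x)$ via \eqref{eq:longtaileddef}) coincides with the paper's proof verbatim. The gap is in your implementation of part 1. You apply Proposition~\ref{prop:liminfbelow} to $g=b$ itself, obtaining $(a*b)(x)\geq Db(x)$, and then try to pass to $a*(b\1_{\{|\cdot|>\rho\}})$ by claiming that $a*(b\1_{\{|\cdot|\leq\rho\}})$ can be ``absorbed'' using $b\in L^1$ and long-tailedness. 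That step fails: $\bigl(a*(b\1_{\{|\cdot|\leq\rho\}})\bigr)(x)\leq\|b\|_{L^\infty}\int_{x-\rho}^{x+\rho}a(z)\,dz$ is governed by the tail of $a$, not of $b$, and in the alternative ``$u_0\geq b$ on the tails'' the kernel $a$ is only assumed to lie in $L^1\cap L^\infty$, so it may decay far more slowly than $b$ (say $a(x)\sim|x|^{-2}$ while $b(x)=e^{-\sqrt{|x|}}$); then the subtracted term dominates $Db(x)$ and the resulting lower bound is vacuous. The same objection applies to the alternative ``$a\geq b$ on the tails'', where the discarded term $u_0*(b\1_{\{|\cdot|\leq\rho\}})$ is governed by the unconstrained tail of $u_0$.

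The repair is a substitution rather than a subtraction, and it is what the paper does: apply Proposition~\ref{prop:liminfbelow} with $f=a$ and $g=u_0\1_{(-\rho,\rho)}+b\1_{\R\setminus(-\rho,\rho)}$ (respectively $f=u_0$ and $g=a\1_{(-\rho,\rho)}+b\1_{\R\setminus(-\rho,\rho)}$). Membership of $g$ in $\L(\R)$ and the value of the lower bound $Dg(x)$ for large $|x|$ depend only on the tail of $g$, where $g=b$, while $g\leq u_0$ (respectively $g\leq a$) pointwise everywhere; hence $(a*u_0)(x)\geq(g*f)(x)\geq Db(x)$ for large $|x|$ with no error term to control. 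The first alternative of part 2 is handled identically with $g=u_0\1_{(-\infty,\rho)}+b\1_{[\rho,\infty)}\in\PL(\R)$; your monotonicity observation that $u_0$ is bounded below by a positive constant on left half-lines is precisely what puts this $g$ into $\P(\R_-)$, so keep it. Finally, the tail-convexity demanded by Theorem~\ref{thm:bb:est_below}, about which you worry at length, is indeed not guaranteed by $b\in\L(\R)$ alone---the paper's own proof is silent on this point---so your convex-minorant remark is a reasonable patch, but it is not where the real difficulty lies.
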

\begin{proof}
\begin{enumerate}
        \item The statement is a straightforward application of Theorem~\ref{thm:bb:est_below} and inequality \eqref{eq:3223352csaerw}, applied for either 
        \[
        f=a, \quad g=u_0\1_{(-\rho,\rho)}+b\1_{\R\setminus(-\rho,\rho)}\in\L(\R)
        \] 
        or
        \[
        f=u_0, \quad g=a\1_{(-\rho,\rho)}+b\1_{\R\setminus(-\rho,\rho)}\in\L(\R)
        \]
        \item The first case is also followed from  Theorem~\ref{thm:bb:est_below} and inequality \eqref{eq:3223352csaerw} with 
        \[
        f=a, \quad g=u_0\1_{(-\infty,\rho)}+b\1_{[\rho,\infty)}\in\PL(\R)
        \] 
        $f=a$ and $g=u_0\in \PL(\R)$. The second case follows from the following chain of inequalities: first, because of \eqref{eq:qweqwqw22231er} and \eqref{eq:longtaileddef},
        \[
        (a*u_0)(x)\geq \delta \int_{x-x_0}^\infty a(y)\,dy\geq \delta b(x-x_0), \quad x>\rho+x_0,
        \]
        (assuming, without loss of generality that $\rho+x_0>0$);
        and, because of \eqref{eq:longtaileddef}, for a small $\delta'>0$,
        \[
                b(x-x_0)\geq (1- \delta')b(x), \quad x>\rho_{\delta'}
        \]
        for some $\rho_{\delta'}>\rho+x_0$.\qedhere
\end{enumerate}
\end{proof}

\section{Convergence to zero}

\begin{proposition}\label{prop:generalised_kestens}
                The following statements hold.
        \begin{enumerate}
                \item Let $u_0$ satisfy \eqref{eq:intcase}. Suppose that there exists $p\in L^1(\R)$, such that both $p(s)$, $p(-s)$ are sub-exponential on $\R_+$ in the sense of densities, and there exist $\rho, K>0$, such that
                        \begin{align}
                                &p(x+\tau)\leq Kp(x), \quad p(-x-\tau)\leq Kp(-x), &\quad &x\geq\rho,\ \tau\geq0, \label{eq:p_weakly_monotone}\\
                                &\max\bigl\{ a(x), \ u_0(x)\bigr\} \leq p(x), &\quad &|x|\geq\rho.
                        \end{align}
                        Then, for any $\eps\in(0,1)$, there exist $C_\eps$, $x_\eps>0$, such that
        \begin{equation}\label{eq:generalised_kestens_densities}
                (a^{*n}*u_0)(x) \leq C_\eps(1+\eps)^{n} p(x),\qquad |x|>x_\eps,\ n\in\N.
        \end{equation}
                \item Let $u_0$ satisfy \eqref{eq:moncase}  and be of a bounded variation on $\R$. Suppose that there exists $q\in L^\infty(\R)$ which is decreasing to $0$ on $\R$ and is sub-exponential on $\R_+$ in the sense of distributions, and there exists $\rho>0$, such that $q$ is continuous on $[\rho,\infty)$ and
                        \begin{align}
                                &\max\biggl\{ \int_x^\infty a(y)\,dy, \ u_0(x)\biggr\} \leq q(x), &\qquad &x\geq\rho.
                        \end{align}
                        Then, for any $\eps\in(0,1)$, there exist $C_\eps$, $x_\eps>0$, such that
        \begin{equation}\label{eq:generalised_kestens_distributions}
                (a^{*n}*u_0)(x) \leq C_\eps(1+\eps)^{n} q(x),\qquad x>x_\eps,\ n\in\N.
        \end{equation}
        \end{enumerate}
\end{proposition}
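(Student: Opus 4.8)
The plan is to reduce both statements to the Kesten-type bounds already available, namely Lemma~\ref{lem:eqqeour}(2) in the integrable case and Lemma~\ref{lem:eqqe}(2) in the monotone case, after absorbing $u_0$ into the convolution powers of $a$. For part~1, the key observation is that $(a^{*n}*u_0)(x)$ is a convolution of $n+1$ nonnegative integrable functions, each of which is dominated (outside a compact set) by $p$ or by $u_0\le p$; so I would like to compare it with $p^{*(n+1)}(x)$, which by Lemma~\ref{lem:eqqeour} is bounded by $C_\delta(1+\delta)^{n+1}\bigl(\int p\bigr)^{n}p(x)$. The hypotheses \eqref{eq:p_weakly_monotone} together with the sub-exponentiality of both $p(\cdot)$ and $p(-\cdot)$ are exactly what is needed to invoke the two-sided version of our extended Kesten bound (the function $d$ in Lemma~\ref{lem:eqqeour}(2) can be taken to be $p(-\cdot)$, so that \eqref{eq:dsaadsewr23} holds trivially). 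The only nuisance is that $a$ and $u_0$ are dominated by $p$ only for $|x|\ge\rho$, not everywhere; I would handle this by splitting $a = a\1_{B_\rho(0)} + a\1_{|x|>\rho} \le (\text{bounded compactly supported}) + p$ and similarly for $u_0$, expanding the convolution power of the sum, and noting that convolving with a bounded compactly supported $L^1$ function of total mass $\le 1$ only changes constants and, by the long-tailedness \eqref{eq:longtaileddef} of $p$, preserves the asymptotic order $p(x)$ (via Proposition~\ref{prop:liminfbelow}-type estimates, or directly: $(\varphi*p)(x)\le C p(x)$ for $|x|$ large when $\varphi$ is bounded with compact support and $p$ is long-tailed and satisfies \eqref{eq:p_weakly_monotone}). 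Each of the $n+1$ factors in the expansion contributes such a constant, but these constants are uniform in the factor, so the total is $C^{n+1}$, which gets absorbed into $(1+\eps)^n$ after choosing $\delta$ small relative to $\eps$.

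For part~2, I would pass to tails: set $A(x):=\int_x^\infty a(y)\,dy$ and $U_0(x):=u_0(x)$ (which is already a ``tail-like'' decreasing function of bounded variation under \eqref{eq:moncase}), and recall the identity from the remark after Lemma~\ref{lem:eqqe} that convolving densities and then taking the tail equals the $\star$-convolution of the tails: $\int_x^\infty (a^{*n}*u_0)(y)\,dy$ relates to $A^{\star n}\star (\text{tail of }u_0)$. More precisely, since $u_0$ satisfies \eqref{eq:moncase} it is itself (up to normalization) the tail of a probability distribution, so $a^{*n}*u_0$, integrated from $x$ to $\infty$, is a $\star$-convolution of $n+1$ decreasing bounded functions each dominated by $q$ for $x\ge\rho$; then Lemma~\ref{lem:eqqe}(2) gives $q^{\star(n+1)}(x)\le C_\delta(1+\delta)^{n+1}q(-\infty)^{n}q(x)$. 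Differentiating back (or rather: using that $a^{*n}*u_0$ is decreasing, being a convolution of a decreasing function with probability densities, so it is controlled by its own tail increments) recovers \eqref{eq:generalised_kestens_distributions}. The domination for $x<\rho$ is again repaired by a compact-support splitting, using that $\star$-convolution with a compactly supported bounded-variation perturbation preserves the order $q(x)$ by the long-tailedness implied by sub-exponentiality in the sense of distributions (Remark~\ref{rem:longtaileddistr}).

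The main obstacle, and the step I would spend the most care on, is making the constant $C^{n+1}$ from the compact-support corrections genuinely \emph{uniform in $n$}, i.e. showing it is $C^{n+1}$ rather than something that grows super-geometrically. The cleanest route is probably not to expand $(a\1_{B_\rho}+a\1_{|x|>\rho})^{*n}$ term by term (which produces $2^n$ terms, harmless) but to first prove a single clean lemma: if $0\le f\in L^1(\R)$ with $\int f\le 1$, $f(x)\le p(x)$ for $|x|>\rho$, and $p$ is long-tailed with \eqref{eq:p_weakly_monotone}, then $f(x)\le \tilde C\, p(x)$ for \emph{all} $x$ with a single constant $\tilde C$ (redefining $p$ on the compact set $[-\rho,\rho]$ to be large enough, exactly as in the re-definition trick used in the proof of Theorem~\ref{thm:bb:est_below}). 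Once every factor is globally $\le \tilde C p$, the chain $a^{*n}*u_0 \le \tilde C^{n+1} p^{*(n+1)}$ is immediate and Lemma~\ref{lem:eqqeour}(2) finishes it, with $\tilde C^{n+1}(1+\delta)^{n+1} \le C_\eps (1+\eps)^n$ for $\delta$ small and $C_\eps$ absorbing the spare factor. The distributional case is analogous with $\star$ in place of $*$, the only extra subtlety being that one must check the $\star$-convolution of $n+1$ decreasing functions bounded by $q$ is again bounded by the $\star$-power $q^{\star(n+1)}$ --- this follows from monotonicity of $\star$ in each (decreasing) argument, which is standard for Lebesgue--Stieltjes convolutions of tails.
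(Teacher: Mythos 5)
Your overall strategy (dominate everything by a sub-exponential majorant and invoke the Kesten-type bounds of Lemmas~\ref{lem:eqqeour} and~\ref{lem:eqqe}) is the right one and is what the paper does, but your specific reduction has a genuine gap at exactly the step you flagged as delicate, and your proposed fix does not repair it. The bound \eqref{eq:Kb1} is $b^{*n}(x)\leq C_\delta(1+\delta)^{n}\bigl(\int_\R b\bigr)^{n-1}b(x)$: it carries the factor $\bigl(\int_\R b\bigr)^{n-1}$. If you redefine $p$ on $[-\rho,\rho]$ so that $a\le\tilde C\hat p$ and $u_0\le\tilde C\hat p$ globally and then write $a^{*n}*u_0\le\tilde C^{\,n+1}\hat p^{*(n+1)}$, the resulting geometric ratio is $\tilde C\,(1+\delta)\int_\R\hat p$, and $\int_\R\hat p$ is in no way close to $1$: on $|x|>\rho$ the function $\hat p$ already dominates $a$ (mass close to $1$), and on $[-\rho,\rho]$ you have inflated it to dominate $\|a\|_\infty$ and $u_0\le\theta$, adding mass of order $\rho\theta$. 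Neither $\tilde C^{\,n+1}$ (unless $\tilde C\le 1+\eps$) nor $\bigl(\int_\R\hat p\bigr)^{n}$ can be absorbed into $(1+\eps)^n$ by shrinking $\delta$; your final display simply drops the $\bigl(\int\hat p\bigr)^n$ factor. Since the whole point of the proposition is the sharp base $(1+\eps)^n$ (it feeds into $\sum_n\frac{(\ka t)^n}{n!}(1+\eps)^n=e^{\ka(1+\eps)t}$ in Proposition~\ref{prop:lin:bb:est_above}), this is fatal as written.

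The paper's repair goes in the opposite direction from yours: instead of enlarging the majorant on a compact set, it modifies $a$ and $u_0$ only on $[R,\infty)$ for $R$ large, setting $\tilde a=\1_{(-\infty,R)}a+\1_{[R,\infty)}p$, so that the added mass $\int_R^\infty p$ is tiny and $\|\tilde a\|_1\le\sqrt{1+\eps}$; the Kesten bound is then applied to $\tilde a^{*n}$ alone (not to $p^{*(n+1)}$), and the convolution with $u_0$ is split as $\int_{\tilde x_0}^\infty+\int_{-\infty}^{\tilde x_0}$, with the first piece controlled by $\tilde c_\eps(1+\eps)^n(\tilde a*\tilde u_0)(x)\le 2\tilde c_\eps(1+\eps)^{n+1}p(x)$ and the second by $\int\tilde a^{*n}\le(1+\eps)^{n/2}$ together with the quasi-monotonicity \eqref{eq:p_weakly_monotone}. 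A secondary issue in your part~2: under \eqref{eq:moncase} the function $u_0$ is not assumed monotone (only of bounded variation), so it is not ``the tail of a probability distribution'' and $a^{*n}*u_0$ need not be decreasing; the paper instead writes $(a^{*n}*u_0)(x)=-\int u_0(x-y)\,dA^{\star n}(y)$ and integrates by parts against $du_0$, applying \eqref{eq:Kb1distr} to the tail $\tilde A$ (again built so that $\tilde A(-\infty)\le\sqrt{1+\eps}$), which avoids both the monotonicity assumption and your ``differentiate back'' step.
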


\begin{proof}
1) Define, for $x\in\R$,
        \begin{align*}
                \tilde{a}(x) &:= \1_{(-\infty,R)}(x) a(x) + \1_{[R,\infty)}(x) p(x), \\ 
                \tilde{u}_0(x) &:= \1_{(-\infty,R)}(x) u_0(x) + \1_{[R,\infty)}(x) p(x), 
        \end{align*}
        where $R=R(\eps)>$ is chosen such that $\max\bigl\{ \|\tilde{a}\|_1, \frac{\|\tilde{u}_0\|_1}{\|u_0\|_1}\bigr\} \leq \sqrt{1+\eps}$. (Here the sub-index $1$ denotes the norm in $L^1(\R)$.)   
        Then $a\leq \tilde{a}$, $u_0\leq \tilde{u}_0$, and $\tilde{a},\ \tilde{u}_0$ are sub-exponential on $\R_+$ in the sense of densities (cf. \cite[Corollary 2.18]{FT2017b}). 
        By~\eqref{eq:Kb1},  which we apply for $\delta=\sqrt{1+\eps}-1$, $C_\eps=\frac{\tilde{c}_\eps}{\sqrt{1+\eps}}$, $b(x) =\frac{1}{\|\tilde{a}\|_1}\tilde{a}(x)$, $x\in\R$, there exists $\tilde{x}_0 = \tilde{x}_0(\eps)>0$, such that
        \begin{equation}\label{eq:kesten_density}
                \tilde{a}^{*n}(x) \leq \tilde{c}_\eps (1+\eps)^n \tilde{a}(x),\qquad x\geq \tilde{x}_0.
        \end{equation}
        Let us estimate
        \begin{equation}\label{eq:wqwqwqewwqeweq}
        \begin{aligned}
                (a^{*n}*u_0)(x) &=  \int_{\tilde{x}_0}^{\infty} a^{*n}(y) u_0(x-y)dy + \int_{-\infty}^{\tilde{x}_0} a^{*n}(y)u_0(x-y)dy \\&=: I_1(x) + I_2(x).
        \end{aligned}
        \end{equation}
        By \eqref{eq:p_weakly_monotone}, \eqref{eq:kesten_density}, the following estimate holds for $x\geq \tilde{x}_0 + \max\{\rho, R\} =:\tilde{x}_1$,
        \begin{align*}
                I_1(x) &\leq \int_{\tilde{x}_0}^{\infty} \tilde{a}^{*n}(y) \tilde{u}_0(x-y)dy \leq  \int_{\tilde{x}_0}^{\infty} \tilde{c}_\eps (1+\eps)^n \tilde{a}(y) \tilde{u}_0(x-y) dy\\ 
                        &\leq  \tilde{c}_\eps (1+\eps)^n (\tilde{a}*\tilde{u}_0)(x);\\[2mm]
                I_2(x) &\leq \int_{-\infty}^{\tilde{x}_0} \tilde{a}^{*n}(y)\tilde{u}_0(x-y)dy = \int_{-\infty}^{\tilde{x}_0} \tilde{a}^{*n}(y)p(x-y)dy\\ 
                &\leq  Kp(x-\tilde{x}_0)\int_{-\infty}^{\tilde{x}_0} \tilde{a}^{*n}(y) dy \leq K (1+\eps)^{\frac{n}{2}} p(x-\tilde{x}_0).
        \end{align*}
        By \cite[Proposition 2.17]{FT2017b} and since $p$ is long-tailed, there exists $x_\eps\geq \tilde{x}_1$, such that for all $x\geq x_\eps$,
        \begin{align*}
                (\tilde{a}*\tilde{u}_0)(x) &\leq (1+\eps)2 \tilde{a}(x) =(1+\eps)2 p(x), \\
                        p(x-\tilde{x}_0) &\leq (1+\eps)p(x). 
        \end{align*}
        Hence, \eqref{eq:generalised_kestens_densities} holds for all $x>x_\eps$ and  $C_\eps := 2(1+\eps)\max\{\tilde{c}_\eps,K\}$. Redefining $x_\eps$ and $C_\eps$ if needed, we prove similarly \eqref{eq:generalised_kestens_densities} for all $x<-x_\eps$.

        2) Define, for $x\in\R$,
        \begin{align*}
                A(x)& := \int_{x}^{\infty} a(y)dy, \\ 
                \tilde{A}(x) &:= \1_{(-\infty,R)}(x) \biggl(\int_{x}^{R}a(y)dy + q(R)\biggr)+ \1_{[R,\infty)}(x) q(x),\\ 
                \tilde{u}_0(x) &:= \1_{(-\infty,R)}(x) \bigl(u_0(x)-u_0(R) + q(R)\bigr)+ \1_{[R,\infty)}(x) q(x),
        \end{align*}
        where $R=R(\eps)>0$ is such that $\max\bigl\{\tilde{u}_0(-\infty), \frac{\tilde{A}(-\infty)}{A(-\infty)}\bigr\} \leq \sqrt{1+\eps}$.
        Then $A\leq \tilde{A},\ u_0\leq\tilde{u}_0$, and $\tilde{A},\ \tilde{u}_0$ are sub-exponential on $\R_+$ in a sense of distributions.
        By \eqref{eq:Kb1distr}, which we apply for $\delta=\sqrt{1+\eps}-1$, $C_\delta =\frac{\tilde{c}_\eps}{\sqrt{1+\eps}} $, $b(x)=\frac{1}{\tilde{A}(-\infty)}\tilde{A}(x)$, $x\in\R$, there exists $\tilde{x}_0 = \tilde{x}_0(\eps)\geq 0$ (in fact, one can put $\tilde{x}_0=0$), such that
        \begin{equation}\label{eq:kesten_distribution}
                \int_{x}^{\infty} a^{*n}(y) dy = A^{\star n}(x) \leq \tilde{A}^{\star n}(x) \leq \tilde{c}_\eps (1+\eps)^n \tilde{A}(x),\qquad x\geq \tilde{x}_0.
        \end{equation}
        Let us estimate \eqref{eq:wqwqwqewwqeweq} for chosen $\tilde{x}_0$.
        By \eqref{eq:kesten_distribution} and \cite[Ch.I Theorem 4b]{Wid1941}, we have for $x\geq \tilde{x}_0 + \max\{\rho, R\} =:\tilde{x}_1$,
        \begin{align*}
                I_1(x) &= - \int_{\tilde{x}_0}^{\infty} u_0(x-y) d A^{\star n}(y) =  u_0(x-\tilde{x}_0) A^{\star n}(\tilde{x}_0) +\int_{\tilde{x}_0}^{\infty} A^{\star n} (y)d u_0(x-y) \\
                &\leq u_0(x-\tilde{x}_0) + \tilde{c}_\eps (1+\eps)^n \int_{\tilde{x}_0}^{\infty} \tilde{A}(y) d u_0(x-y)\\
                &\leq u_0(x-\tilde{x}_0) + \tilde{c}_\eps (1+\eps)^n \tilde{A}(\tilde{x}_0) u_0(x-\tilde{x}_0) - \tilde{c}_\eps (1+\eps)^n \int_{\tilde{x}_0}^{\infty} u_0(x-y) d \tilde{A}(y)\\
                &\leq q(x-\tilde{x}_0) + \tilde{c}_\eps(1+\eps)^n (q(x-\tilde{x}_0) + \tilde{A}^{\star 2}(x));\\
                I_2(x) &\leq q(x-\tilde{x}_0)\int_{-\infty}^{\tilde{x}_0} a^{*n}(y) dy \leq q(x-\tilde{x}_0).
        \end{align*}
                Since $\tilde{A}$ is sub-exponential and long-tailed, there exists $x_\eps\geq \tilde{x}_1$, such that for all $x\geq x_\eps$,
        \begin{align*}
                &\tilde{A}^{\star 2}(x) \leq 2 (1+\eps) \tilde{A}(x) = 2(1+\eps)q(x),\\
                &\tilde{A}(x-\tilde{x}_0) = q(x-\tilde{x}_0) \leq (1+\eps)q(x).
        \end{align*}
        Hence, \eqref{eq:generalised_kestens_distributions} holds for $C_\eps = (4+3\eps)\tilde{c}_\eps$, and the proof is fulfilled.
\end{proof}

\begin{definition}\label{def:SLSR+R}
\begin{enumerate}
        \item Let $\S(\R_+)\subset\L(\R_+)$ denote the set of all bounded functions $b:\R\to\R_+$ which are sub-exponential on $\R_+$ in the sense of densities, tail-decreasing and tail-continuous. Let $\S(\R_-)\subset\L(\R_-)$ be the set of all bounded $b:\R\to\R_+$, such that $b(-x)$ belongs to $\S(\R_+)$. We set also
        \[
        \S(\R):=\S(\R_+)\cap\S(\R_-).
        \]

        \item Let $\M(\R)$ denote the set of all bounded monotone functions $b:\R\to\R_+$ such that $\lim\limits_{x\to\infty}b(x)=0$. 
        We set also
        \[
        \MS(\R):=\M(\R)\cap\S(\R_+).
        \]
\end{enumerate}
\end{definition}

\begin{proposition}\label{prop:lin:bb:est_above}
Let \eqref{assum:kappa>m} hold and $0\leq u_0\in E$. Suppose that $u_0\not\equiv0$, cf. Remark~\ref{rem:agreementnonzero}. Let $w=w(x,t)$ be the corresponding solution to~\eqref{eq:linearequation}. Let either \eqref{eq:intcase} hold
and $b\in\S(\R)$ be such that
\begin{equation}\label{eq:dsadasdsad}
\max\bigl\{ a(x), \ u_0(x)\bigr\}\leq b(x), \quad |x|>\rho,
\end{equation}
or \eqref{eq:moncase} hold and $b\in\MS(\R)$ be such that
\begin{equation}\label{eq:dsadasdsad1}
\max\biggl\{ \int_x^\infty a(y)\,dy, \ u_0(x)\biggr\}\leq b(x), \quad x>\rho,
\end{equation}
for some $\rho>0$. 
Then, for each $\eps\in(0,1)$, 
  \begin{equation}\label{eq:outsidethefront-lin}
    \lim\limits_{t\to\infty}\essinf\limits_{x\in\R\setminus\La_\eps^+(t,b)} w(x,t)=0,
  \end{equation} 
  where
  \begin{equation}\label{eq:defLambdaplus}
  \La_\eps^+(t,b):=\begin{cases}
[-l(t+\eps t,b), r(t+\eps t,b)], &  b\in\S(\R),\\[1mm]
(-\infty, r(t+\eps t,b)], &  b\in\MS(\R).
\end{cases}
  \end{equation}
\end{proposition}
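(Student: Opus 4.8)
The plan is to combine the explicit series representation of the solution to the linear equation \eqref{eq:linearequation} with the generalized Kesten bounds of Proposition~\ref{prop:generalised_kestens}. Since $v\mapsto\ka\,a*v-mv$ is a bounded operator on $E$ (because $\|a\|_{L^1}=1$), the solution to \eqref{eq:linearequation} is given by the norm-convergent series
\[
w(x,t)=e^{-mt}\sum_{n=0}^{\infty}\frac{(\ka t)^{n}}{n!}\,(a^{*n}*u_0)(x),\qquad a^{*0}*u_0:=u_0,
\]
the convergence being in $E$, uniform on compact $t$-intervals, since $\|a^{*n}\|_{L^1}=1$ for all $n$. Thus the whole task reduces to a bound on $(a^{*n}*u_0)(x)$ that is uniform in $n$, with a geometric factor whose base is close to $1$, followed by resummation.

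Next I would fix $\eps\in(0,1)$ and pick an auxiliary $\eps'\in(0,1)$ with $\ka\eps'<\beta\eps$; the role of $\eps'$ is to absorb the loss inherent in the generalized Kesten estimate. In the case \eqref{eq:intcase}, $b\in\S(\R)$, I apply Proposition~\ref{prop:generalised_kestens}(1) with $p:=b$, which is admissible since $b\in\S(\R)$ is tail-decreasing (so \eqref{eq:p_weakly_monotone} holds with $K=1$ for large $|x|$), $b\in L^1(\R)$ by the definition of $\S(\R)$, and \eqref{eq:dsadasdsad} furnishes the domination of $\max\{a,u_0\}$; this yields $C_{\eps'},x_{\eps'}>0$ with $(a^{*n}*u_0)(x)\le C_{\eps'}(1+\eps')^{n}b(x)$ for $|x|>x_{\eps'}$ and $n\in\N$. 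In the case \eqref{eq:moncase}, $b\in\MS(\R)$, I would first dominate $u_0$ by a non-increasing, bounded-variation function $\tilde u_0$ on $\R$ with $u_0\le\tilde u_0$ and $\tilde u_0\equiv b$ on $(\rho,\infty)$ — take $\tilde u_0$ equal to the constant $\max\{\|u_0\|_{\infty},b(\rho)\}$ on $(-\infty,\rho]$, enlarging $\rho$ so that $b$ is continuous on $[\rho,\infty)$ — and observe $a^{*n}*u_0\le a^{*n}*\tilde u_0$; then apply Proposition~\ref{prop:generalised_kestens}(2) with $q:=\tilde u_0$, which dominates both $\int_x^{\infty}a$ and $u_0$, is decreasing to $0$ on $\R$, and, by $b\in\MS(\R)$, is sub-exponential on $\R_+$ in the sense of distributions, obtaining $(a^{*n}*u_0)(x)\le C_{\eps'}(1+\eps')^{n}b(x)$ for $x>x_{\eps'}$ and $n\in\N$. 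In either case the $n=0$ term is covered directly by \eqref{eq:dsadasdsad}, resp.\ \eqref{eq:dsadasdsad1}, after enlarging $x_{\eps'}$ past $\rho$.

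Substituting these bounds into the series, for $|x|>x_{\eps'}$ (resp.\ $x>x_{\eps'}$) I get
\[
w(x,t)\le e^{-mt}C_{\eps'}\,b(x)\sum_{n=0}^{\infty}\frac{(\ka(1+\eps')t)^{n}}{n!}=C_{\eps'}\,b(x)\,e^{(\beta+\ka\eps')t}.
\]
By Definition~\ref{def:ltrt} and Lemma~\ref{le:basicforb}, $r(t+\eps t,b)\to\infty$ (and $l(t+\eps t,b)\to\infty$ when $b\in\S(\R)$) as $t\to\infty$, so for all large $t$ every $x\in\R\setminus\La_\eps^{+}(t,b)$ obeys the range restriction above. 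Moreover $b(r(t+\eps t,b))=e^{-\beta(1+\eps)t}$ by \eqref{eq:explicit}, and tail-monotonicity of $b$ at $+\infty$ gives $b(x)\le e^{-\beta(1+\eps)t}$ for $x\ge r(t+\eps t,b)$; symmetrically, when $b\in\S(\R)$, \eqref{eq:explicit2} and tail-monotonicity of $b$ at $-\infty$ give $b(x)\le e^{-\beta(1+\eps)t}$ for $x\le-l(t+\eps t,b)$. Hence, uniformly over $x\in\R\setminus\La_\eps^{+}(t,b)$ and for all large $t$,
\[
w(x,t)\le C_{\eps'}\,e^{(\beta+\ka\eps')t}\,e^{-\beta(1+\eps)t}=C_{\eps'}\,e^{-(\beta\eps-\ka\eps')t},
\]
which tends to $0$ as $t\to\infty$ since $\ka\eps'<\beta\eps$. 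As $w\ge0$, this proves \eqref{eq:outsidethefront-lin}, in fact with $\esssup$ replacing $\essinf$.

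The main obstacle I anticipate is the two-parameter balance: the generalized Kesten bound is available only with a base $1+\eps'>1$, which after resummation costs the factor $e^{\ka\eps' t}$, whereas shifting the front from $r(t,b)$ to $r((1+\eps)t,b)$ earns only $e^{-\beta\eps t}$; the estimate closes precisely because $\eps'$ can be chosen small relative to $\eps$. A secondary, bookkeeping issue is the monotone case, where Proposition~\ref{prop:generalised_kestens}(2) demands a bounded-variation initial datum and a distribution-sense sub-exponential majorant, which forces the preliminary passage to $\tilde u_0$ and the use of $b\in\MS(\R)$; the rest is the routine resummation displayed above.
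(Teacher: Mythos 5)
Your proof is correct and follows essentially the same route as the paper's: the exponential series for the solution of \eqref{eq:linearequation}, the generalized Kesten bound of Proposition~\ref{prop:generalised_kestens} with majorant $b$, resummation, and a balance of two small parameters so that the gain $e^{-\beta(1+\eps)t}$ from evaluating $b$ beyond $r(t+\eps t,b)$ beats the loss $e^{\ka\eps' t}$ coming from the base $(1+\eps')$ in the Kesten estimate. The only deviation is your preliminary replacement of $u_0$ by a monotone majorant $\tilde u_0$ in the case \eqref{eq:moncase}, a sensible bookkeeping step to secure the bounded-variation hypothesis of Proposition~\ref{prop:generalised_kestens}(2), which the paper applies directly with $q=b$.
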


\begin{proof}
The solution to \eqref{eq:linearequation} is given by
\[
w(x,t)=e^{-mt}u_0(x)+e^{-mt}\sum_{n=1}^\infty \frac{(\ka t)^n}{n!}(a^{*n}*u_0)(x), \quad x\in\R.
\]
By Proposition~\ref{prop:generalised_kestens} with $p=b$ for $b\in\S(\R)$ or $q=b$ for $b\in\MS(\R)$, one gets that, for any $\delta\in(0,1)$, there exist $C_\delta$, $x_\delta>0$, such that, for all $|x|>x_\delta>\rho$, in the case $b\in\S(\R)$, and for all $x>x_\delta>\rho$, in the case $b\in\MS(\R)$,
\begin{align*}
 w(x,t)&\leq e^{-mt}u_0(x)+e^{-mt}\sum_{n=1}^\infty \frac{(\ka t)^n}{n!}C_\delta(1+\delta)^{n} b(x)\\
\intertext{and since, in both cases \eqref{eq:dsadasdsad} and \eqref{eq:dsadasdsad1}, $u_0(x)\leq b(x)$ for the considered values of $x$, one can continue}
&\leq \max\{C_\delta,1\} e^{\ka (1+\delta)t-mt}b(x).
\end{align*}
Take any $\eps\in(0,1)$ and $\delta'\in(0,\eps(\ka-m))$, and set $\delta=\frac{(\ka-m)\eps- \delta'}{\ka}\in(0,1)$, that ensures $\ka (1+\delta)-m=(\ka-m)(1+\eps)-\delta'$. Therefore,
\[
w(x,t)\leq \max\{C_\delta,1\} e^{-\delta't} e^{(\ka-m)(1+\eps)t}b(x), 
\]
again, for either all  $|x|>x_\delta$ or for all $x>x_\delta$, depending on the class to that $b$ belongs.
By \eqref{eq:defLambda} and Definition~\ref{def:ltrt}, we conclude then that there exists $\tau>0$, such that, for all $t>\tau$, 
\[
w(x,t)\leq \max\{C_\delta,1\} e^{-\delta't} , \quad x\in\R\setminus\La_\eps^+(t,b),
\]
that implies the statement.
\end{proof}

Now we can easily get the corresponding result for the solution to \eqref{eq:basicequation}.

\begin{theorem}\label{thm:bb:est_above}
Let \eqref{assum:kappa>m}--\eqref{assum:sufficient_for_comparison} hold.  Let $0\leq u_0\leq\theta$, $u_0\not\equiv0$ (cf. Remark~\ref{rem:agreementnonzero}), and let $u=u(x,t)$ be the corresponding solution to~\eqref{eq:basicequation}. Let $a,u_0$ and $b:\R\to\R_+$ satisfy the assumptions of Proposition~\ref{prop:lin:bb:est_above}. Then, for each $\eps\in(0,1)$, 
  \begin{equation}\label{eq:outsidethefront}
    \lim\limits_{t\to\infty}\essinf\limits_{x\in\R\setminus\La_\eps^+(t,b)} u(x,t)=0,
  \end{equation} 
  where $\La_\eps^+$ is given by \eqref{eq:defLambdaplus}.
\end{theorem}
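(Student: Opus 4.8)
The plan is to deduce the nonlinear statement from the linear one (Proposition~\ref{prop:lin:bb:est_above}) via the comparison principle, which is exactly what the monostability hypotheses \eqref{assum:kappa>m}--\eqref{assum:Gpositive} are set up to deliver. First I would observe that, writing the equation in reaction-diffusion form \eqref{eq:RDequation}--\eqref{eq:FthroughG}, the bound \eqref{eq:propertiesofF}, namely $0\leq Fv\leq\beta v$ for $0\leq v\leq\theta$, combined with Proposition~\ref{thm:existandcompared} (which guarantees $0\leq u(\cdot,t)\leq\theta$ for the given $u_0$), shows that $u$ is a sub-solution to the \emph{linear} equation \eqref{eq:linearequation}. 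Concretely, the term $\ka(a*u)-\ka u+Fu$ is dominated by $\ka(a*u)-\ka u+\beta u=\ka(a*u)-mu$, so $\partial_t u\leq \ka(a*u)-mu$. Since the linear problem \eqref{eq:linearequation} is order preserving (it has nonnegative kernel in its Duhamel series), one gets $u(x,t)\leq w(x,t)$ for a.a.\ $x$ and all $t\geq0$, where $w$ is the solution to \eqref{eq:linearequation} with the same initial datum $u_0$.

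Granting this domination, the theorem follows immediately: the hypotheses on $a$, $u_0$, $b$ are precisely those of Proposition~\ref{prop:lin:bb:est_above}, so \eqref{eq:outsidethefront-lin} holds for $w$, i.e.
\[
\lim_{t\to\infty}\esssup_{x\in\R\setminus\La_\eps^+(t,b)}w(x,t)=0
\]
(the statement is written with $\essinf$ in \eqref{eq:outsidethefront-lin}, but since $w\geq0$ this is a typo for $\esssup$; in any case $0\leq u\leq w$ forces the same conclusion for $u$). Then $0\leq u(x,t)\leq w(x,t)$ on $\R\setminus\La_\eps^+(t,b)$ gives $0\leq\esssup_{\R\setminus\La_\eps^+(t,b)}u(x,t)\leq\esssup_{\R\setminus\La_\eps^+(t,b)}w(x,t)\to0$, which is \eqref{eq:outsidethefront}.

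The only genuinely delicate point — and the one I would state carefully rather than wave through — is the justification of $u\leq w$: one must be slightly careful that the comparison is applied in the right direction and within the class of functions between $0$ and $\theta$ where all the structural assumptions hold. The cleanest route is to invoke Lemma~\ref{lem:comparison} with $u_1=u$ and $u_2$ suitably chosen, or, even more directly, to compare $u$ with $w$ at the level of the Duhamel representations: $u(x,t)=e^{-mt}u_0(x)+\ka\int_0^t e^{-m(t-s)}\bigl((a*u)(x,s)-\text{(nonnegative nonlinear term)}\bigr)\,ds$ against the same formula for $w$ without the subtracted term, and iterate. Either way, positivity of the convolution kernel $a$ and of $u$ (from Proposition~\ref{thm:existandcompared}) is what makes the iteration monotone, so no new ideas beyond those already assembled are needed; this is why the author calls the result an easy consequence. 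I would therefore present the proof as: (i) $u\leq w$ by comparison using \eqref{eq:propertiesofF} and Proposition~\ref{thm:existandcompared}; (ii) apply Proposition~\ref{prop:lin:bb:est_above} to $w$; (iii) combine.
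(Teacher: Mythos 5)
Your proposal is correct and follows essentially the same route as the paper: the paper likewise notes that $0\leq u\leq\theta$ (Proposition~\ref{thm:existandcompared}) together with $Gu\geq0$ from \eqref{assum:Gpositive} gives, via Duhamel's principle, the domination $u(\cdot,t)\leq w(\cdot,t)$ by the solution $w$ of the linearized problem \eqref{eq:linearequation}, and then concludes directly from Proposition~\ref{prop:lin:bb:est_above}. Your additional observation that the $\essinf$ in \eqref{eq:outsidethefront-lin} and \eqref{eq:outsidethefront} should read $\esssup$ (as in \eqref{eq:main2} and \eqref{eq:main4}, and as the proof of the linear proposition actually establishes) is also accurate.
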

\begin{proof}
By Proposition~\ref{thm:existandcompared},  $0\leq u_0\leq \theta$ implies $0\leq u(\cdot,t)\leq \theta$ for $t>0$; and then, by \eqref{assum:Gpositive}, $Gu\geq0$. Then, it is straightforward to show by Duhamel's principle, that $u(\cdot,t)\leq w(\cdot,t)$, $t>0$, where $w$ solves \eqref{eq:linearequation}. Hence the proof follows from Proposition~\ref{prop:lin:bb:est_above}.
\end{proof}

Finally, one can prove the main Theorem~\ref{thm:fullmain}.
\begin{proof}[Proof of \protect Theorem~\ref{thm:fullmain}]
Let $\eps\in(0,1)$, $b_1$ and $b_2$ be fixed. Take any $\eps_1\in(0,\eps)$.
Apply Corollary~\ref{cor:below} for the function $b_1$  and $\eps_1$, and apply Theorem~\ref{thm:bb:est_above} for the function $b_2$ and $\eps_1$. By \eqref{eq:eqandbdd3}, one can apply Lemma~\ref{prop:etaepsforweaklyequiv}, then, for big $t$,
\[
r(t-t\eps,b) \leq r(t-t\eps_1,b_1), \quad  r(t+t\eps_1,b_2) \leq r(t+t\eps,b),
\]
and the same holds for $l(\cdot)$. Therefore, for big $t$,
\[
\La_\eps^-(t,b)\subset \La_{\eps_1}^-(t,b_1),  \qquad \R\setminus\La_\eps^+(t,b)\subset\R\setminus\La_{\eps_1}^+(t,b_1),
\]
and hence
\[
\theta\geq\essinf_{x\in \La_\eps^-(t,b)}u(x,t)\geq \essinf_{x\in\La_{\eps_1}^-(t,b_1)}u(x,t)\to\theta, \quad t\to\infty
\]
and
\[
0\leq \esssup_{x\in\R\setminus\La_\eps^+(t,b)}u(x,t)\leq\esssup_{x\in\R\setminus\La_{\eps_1}^+(t,b_1)}u(x,t)\to0, \quad t\to\infty,
\]
that completes the proof.
\end{proof}

\section*{Acknowledgments}
Authors gratefully acknowledge the financial support by the DFG through CRC 701 ``Stochastic
Dynamics: Mathematical Theory and Applications'' (DF~and~PT), the European
Commission under the project STREVCOMS PIRSES-2013-612669 (DF), and the ``Bielefeld Young Researchers'' Fund through the Funding Line Postdocs:
``Career Bridge Doctorate\,--\,Postdoc'' (PT).

\let\OLDthebibliography\thebibliography
\renewcommand\thebibliography[1]{
  \OLDthebibliography{#1}
  \setlength{\parskip}{2pt}
  \setlength{\itemsep}{0pt plus 0.3ex}
}

\end{document}